\newtheorem{thm}{Theorem}
\newtheorem*{theorem1*}{Theorem \ref{thm1}}
\newtheorem*{theorem2*}{Theorem \ref{thm2}}
\newtheorem{q}{Question}
\newtheorem{p}{Problem}
\newtheorem{lem}{Lemma}
\newtheorem{cor}{Corollary}
\newtheorem{obs}{Observation}
\newtheorem{prop}{Proposition}
\theoremstyle{definition}
\newtheorem{defn}{Definition}
\newtheorem{remark}{Remark}
\newtheorem{exam}{Example}
\DeclareMathOperator{\spn}{span}
\DeclareMathOperator{\sgn}{sgn}
\DeclareMathOperator{\ind}{in}
\DeclareMathOperator{\outd}{out}
\newcommand\R{\mathbb{R}}
\newcommand\C{\mathcal{C}}
\newcommand\Z{\mathbb{Z}}
\newcommand\Hy{\mathcal{H}}
\newcommand\Res{\mathcal{R}}
\newcommand\K{\mathcal{K}}
\newcommand\kg{\mathbf{K}}
\newcommand\kp{\kappa}
\newcommand\In{\mathcal{I}}
\newcommand\Th{\mathcal{T}}
\def\v{{\bf a}}
 \def\f_H{{\bf w}}
 \def\xx{{\bf x}}
\def\R{\mathbb{R}}
\def\Z{\mathbb{Z}}
 \def\F{\mathcal{F}}
\def\P{\mathcal{P}}
\newtheoremstyle{named}{}{}{\itshape}{}{\bfseries}{.}{.5em}{#1 \thmnote{#3}}
\theoremstyle{named}
\title{Root cones and the resonance arrangement}
\author{Samuel C. Gutekunst}
\author{Karola M\'esz\'aros}
\author[T. K. Petersen]{T. Kyle Petersen}
\thanks{Work of Gutekunst supported by the National Science Foundation Graduate Research Fellowship Program under Grant No. DGE-1650441. Work of M\'esz\'aros partially supported by a National Science Foundation Grant (DMS 1501059)  as well as by a von Neumann Fellowship at the IAS   funded by the Fund for Mathematics and the Friends of the Institute for Advanced Study. Work of Petersen supported by a Simons Foundation collaboration travel grant.  Any opinions, findings, and conclusions or recommendations expressed in this material are those of the authors and do not necessarily reflect the views of the National Science Foundation. 
}
\address{Operations Research and Information Engineering, Cornell University, Ithaca, NY 14853}
\address{Department of Mathematics, Cornell University, Ithaca, NY 14853 and School of Mathematics, Institute for Advanced Study, Princeton, NJ 08540}
\address{Department of Mathematical Sciences, DePaul University, Chicago, IL 60614}
\email{scg94@cornell.edu, karola@math.cornell.edu, tpeter21@depaul.edu }
\date{\today}
\begin{document}

\maketitle

\begin{abstract} 
We study the connection between triangulations of a type $A$ root polytope and the resonance arrangement, a hyperplane arrangement that shows up in a surprising number of contexts. Despite an  elementary definition for the resonance arrangement, the number of resonance chambers has only been   computed up to the $n=8$ dimensional case. We focus on data structures for labeling chambers, such as sign vectors and sets of alternating trees, with an aim at better understanding the structure of the resonance arrangement, and, in particular, enumerating its chambers. Along the way, we make connections with similar (and similarly difficult) enumeration questions. With the root polytope viewpoint, we  relate resonance chambers to the chambers of polynomiality of the Kostant partition function. With the hyperplane viewpoint, we clarify the  connections between resonance chambers and threshold functions. In particular, we show that  the base-2 logarithm of the number of resonance chambers is asymptotically $n^2$.
\end{abstract}

\section{Introduction}

This is a story of three counting problems: 
\begin{enumerate}
\item the number of chambers of polynomiality of the Kostant partition function,
\item the number of threshold functions, and 
\item the number of maximal unbalanced families.
\end{enumerate}

All three counting problems have resisted exact enumeration beyond small cases. We find in Sloane's On-Line Encyclopedia of Integer Sequences \cite{Sloane} that problem (1) has 6 entries (A119668), problem (2) has 10 entries (A000609), and problem (3) has 8 entries (A034997). The purpose of this article is to provide some links between these problems and to suggest some data structures that might prove useful for either exact or asymptotic enumeration.

\subsection{Kostant chambers}

Vector partition functions are fundamental in mathematics. A special vector partition function associated to the type $A_n$ root system is the \emph{Kostant partition function}, which was introduced by Bertram Kostant in 1958 in order to write down the multiplicity of a weight of an irreducible representation of a semisimple Lie algebra, also known as the \emph{Weyl character formula} or \emph{Kostant multiplicity formula} \cite{kostant2, kostant1}. Kostant partition functions are ubiquitous in mathematics, appearing not only in representation theory, but in algebraic combinatorics, toric geometry and approximation theory, among other areas.

The Kostant partition function is a piecewise polynomial function \cite{sturm} whose domains of polynomiality are maximal convex cones in the common refinement of all triangulations of the convex hull of the positive roots (see \cite{deloerasturm}), which we will refer to as \emph{Kostant chambers}. Let $\K_n$ denote this collection of cones, and let $K_n=|\K_n|$ denote the number of Kostant chambers. For example, Figure \ref{fig:A3Kostant} shows the seven chambers of $\K_3$. 

The inspiration for our work stems from an open problem posed by Kirillov \cite{ubiquity} and its investigation by de Loera and Sturmfels in \cite{deloerasturm}.
 
\begin{q} \label{q1} 
How many chambers of polynomiality does the Kostant partition function have?
\end{q}

It is an open problem to show that enumerating $K_n$ is \#P-hard \cite{triang}. The values of $K_n$ have been calculated up to $n=6$ by de Loera and Sturmfels \cite{deloerasturm}. See Table \ref{tab:data}. 

\begin{figure}
\[
\begin{tikzpicture}[scale=1.5]
\coordinate (a) at (0,4);
\coordinate (b) at (2,0);
\coordinate (c) at (-2,0);
\coordinate (d) at (1.55,2.36);
\coordinate (e) at (-1.55,2.36);
\coordinate (f) at (0,1.2);
\draw (a) to [in=90,out=-30]  (b);
\draw (b) to [in=-20,out=200]  (c);
\draw (c) to [in=210,out=90]  (a);
\draw (c) to [in=220,out=30] (d);
\draw (b) to [in=-40,out=150] (e);
\draw (a) to (f);
\draw (d) to [in=10,out=170] (e);
\draw (a) node[fill=black,circle, inner sep=2] {} node[above] {$e_2-e_3$};
\draw (b) node[fill=black,circle, inner sep=2] {} node[right] {$e_3-e_4$};
\draw (c) node[fill=black,circle, inner sep=2] {} node[left] {$e_1-e_2$};
\draw (d) node[fill=black,circle, inner sep=2] {} node[right] {$e_2-e_4$};
\draw (e) node[fill=black,circle, inner sep=2] {} node[left] {$e_1-e_3$};
\draw (f) node[fill=black,circle, inner sep=2] {} node[below,yshift=-5pt] {$e_1-e_4$};
\end{tikzpicture}
\]
\caption{A two-dimensional slice of the set of cones in $\K_3$, i.e., the 7 domains of polynomiality for the Kostant partition function.}\label{fig:A3Kostant}
\end{figure}
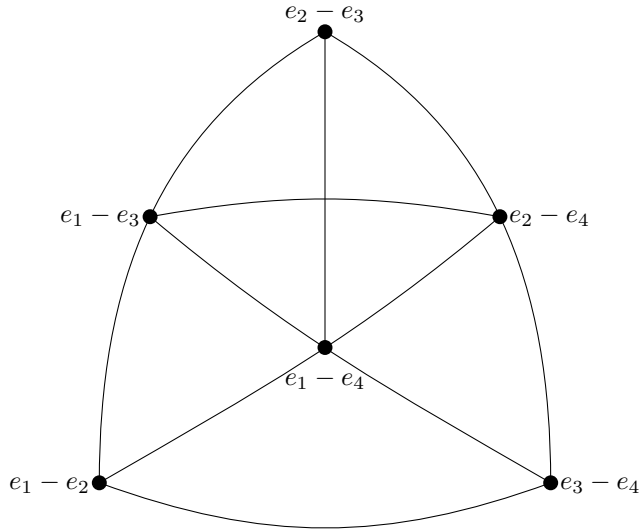

\subsection{Threshold functions}

The study of \emph{linear threshold functions} has a long history of applications in a variety disciplines, including Economics, Psychology, and Computer Science \cite{MurogaBook}. These are boolean functions $f:\{-1,1\}^n \to \{-1,1\}$ of the form $f(\mathbf{x}) = \sgn( t + \mathbf{a}\cdot \mathbf{x})$ for some \emph{threshold} $t$ and some vector $\mathbf{a}$ known as the \emph{weight vector}. 

It is well-known that threshold functions correspond to their weight vectors $\mathbf{a}$ only up to the half-spaces determined by negative/nonnegative dot products with $\pm 1$ vectors (see, e.g., \cite{Zuev92}). That is, threshold functions are in bijection with the chambers in the hyperplane arrangement whose normal vectors are all $\pm 1$ vectors, representing vertices of an $(n+1)$-cube. Let $\Th_{n+1}$ denote this arrangement of hyperplanes, which we call the \emph{threshold arrangement}, and let $T_n$ denote the number of chambers in this arrangement, i.e., the number of threshold functions on $n$ variables. See Figure \ref{fig:thresharr3} for the rank 3 arrangement. More details will come in Section \ref{sec:threshold}. According to \cite[A000609]{Sloane}, the largest known exact value for $T_n$ is $T_9=144\,13053\,14531\,21108$, computed in 2006 by work of Gruzling \cite{Gruzling06}. See Table \ref{tab:data}. While exact values are in short supply, some asymptotic estimates for $T_n$ have been made. The best estimate we know of comes from work of Zuev \cite {Zuev92}, which shows that $\log_2 T_n \sim n^2$.

\begin{figure}
\includegraphics[scale=.5]{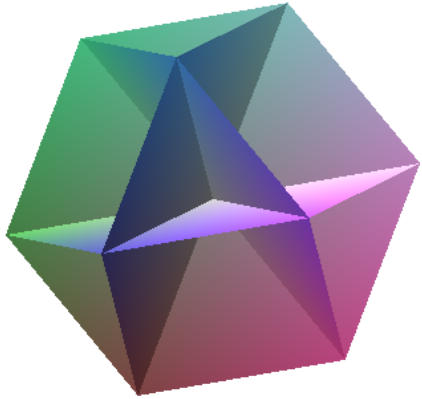} 
\caption{A view of the threshold arrangement $\Th_3$ of rank 3. In the image we can see seven chambers above $V_{\emptyset}$ (the other seven are antipodal to these), thus there are $T_2=14$ threshold functions on two variables.}\label{fig:thresharr3}
\end{figure}

\subsection{Maximal unbalanced families}

While perhaps less well-known, \emph{maximal unbalanced families} have appeared in a surprising number of guises. A family of subsets, which we think of as a collection of vertices of an $n$-cube $\{0,1\}^n$, is \emph{balanced} if the convex hull of the vertices intersects the diagonal of the $n$-cube. A family is \emph{unbalanced} otherwise. Shapley and others studied balanced families in the context of game theory \cite{Shapley67}. Balanced families are closed under taking unions, and hence some of Shapley's results are phrased in terms of \emph{minimal balanced families}. Dually, the collection of unbalanced families is closed under taking intersections, which inspired  the investigation of \emph{maximal unbalanced families}. In the work of Billera, Tatch Moore, Dufort Moraites, Wang, and Williams \cite{Bil12}, it is recognized that maximal unbalanced families are in bijection with chambers of a hyperplane arrangement which we refer to as the \emph{resonance arrangement}, following \cite{Billera18, Cavalieri11}.

The resonance arrangement appears in several places: For example, Kamiya, Takemura, and Terao studied this arrangement with relation to ``ranking patterns of unfolding models'' which have found applications in Psychometrics, Marketing, and Voting Theory \cite{Kam11, Kam12}.\footnote{Kamiya, Takemura, and Terao call the resonance arrangement the \emph{all-subsets arrangement}, and that name is also used by Billera, Tatch Moore, Dufort Moraites, Wang, and Williams. We adopt the nomenclature of Cavalieri et al.\, which is also followed in later work on Hurwitz numbers and is used in recent work of Billera, Billey, and Tewari \cite{Billera18}.  In Liu, Norledge, and Ocneanu \cite{Liu19},  the resonance arrangement is also called the adjoint braid arrangement.} In the case of ranking patterns of codimension one, they find the patterns in bijection with maximal unbalanced families. In Physics, Evans encountered and enumerated ``generalized retarded functions'' when studying the analytic continuations of thermal Green functions \cite{Evans91,Evans94} of low rank, and it happens that these functions are in bijection with maximal unbalanced families as well. Recent mathematical work has also connected to unbalanced families and the resonance arrangement: Cavalieri et al.\  show that the chambers of the resonance arrangement correspond to domains of polynomiality for double Hurwitz numbers \cite[Theorem 1.3]{Cavalieri11}; Bj\"orner used combinatorial topology to make a connection between maximal unbalanced families and a conjecture from extremal combinatorics \cite{Bjorner15}; and Lewis, McCammond, Petersen, and Schwer found that the local distribution of reflection length in the affine symmetric group is generic in chambers of the resonance arrangement \cite[Proposition 3.2(ii)]{Lewis17}.  See also Early \cite{Early17, Early18}.

While it can be defined in several equivalent ways, we will see that the resonance arrangement, denoted $\Res_n$, is isomorphic to the intersection of the threshold arrangement with the hyperplane $\{ \mathbf{x} \in \R^{n+1} : \sum x_i = 0\}$. See Figure \ref{fig:rank3res}. We let $R_n$ denote the number of chambers of the resonance arrangement, i.e., the number of maximal unbalanced families on $\{0,1\}^{n+1}$. The largest known value of $R_n$ according to OEIS is $R_8 = 41\,91727\,56930$ contributed in 2011 by Evans \cite[A034997]{Sloane}. From general properties of hyperplane arrangements it follows that the number of resonance chambers has roughly the same asymptotic behavior as the number of threshold functions, so $\log_2 R_n \sim n^2$ as well. We make this and other claims precise in the next subsection.

\begin{figure}
\includegraphics[scale=.5]{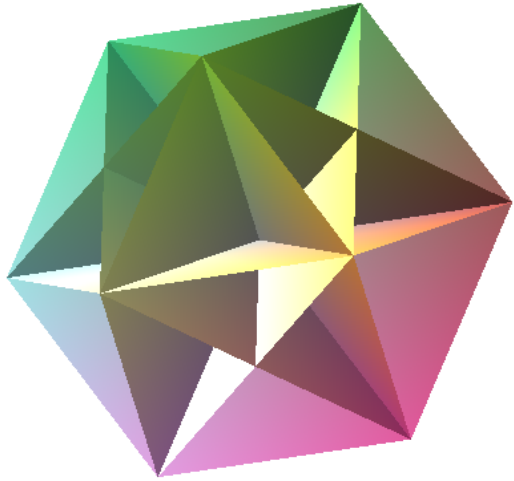}
\caption{The rank three resonance arrangement $\Res_3$ projected onto the $V_{\emptyset} = \{ (w,x,y,z) : w+x+y+z=0\}$ hyperplane. There are 16 chambers visible, and another 16 antipodal to these, so $R_3=32$.}\label{fig:rank3res}
\end{figure}

\subsection{Results and questions}

We now state some results relating $K_n, T_n$, and $R_n$. In Table \ref{tab:data} we compare the sequences in various ways.

\begin{table}
\begin{center}
\begin{tabular}{|c|c|c|c|c|c|c|} \hline 
$n$ &  $\left\lfloor \frac{(n+1)}{2^{n+1}}T_n\right\rfloor$ & $R_n$ & $K_{n+1}$  & $\frac{1}{2}T_n$ & $\frac{R_{n+1}}{(n+2)}$ & $2^{n^2}$  \\ 
\hline \hline
 1 &  2 & 2 & 2 & 2 & 2 & 2 \\
 2 &  5 & 6 & 7 & 7 & 8 & 16 \\
 3 &  26 & 32 & 48 & 52 & 74 & 512 \\
 4 &  294 & 370 & 820 & 941 & 1882 & 65536 \\
 5 &  8866 & 11292 & 44288 & 47286 & 152292 & 33554432 \\
 6 &  821851 & 1066044 & ?  & 7514067 & 43415794 & 68719476736 \\
 7 &  {\small 261814714} & {\small 347326352} & ? & {\small 4189035432} & {\small 46574750770} & {\small 562949953421312}  \\
 8 & {\tiny 308698937454} & {\tiny 419172756930} & ? & {\tiny 8780769776473} & ? & {\tiny 18446744073709551616} \\
 \hline
\end{tabular}
\end{center}
\caption{Comparisons between $K_n$ (the number of Kostant chambers), $T_n$ (the number of threshold functions), and $R_n$ (the number of maximal unbalanced families).}\label{tab:data}
\end{table}

\begin{remark}[Indexing of sequences]
The indexing of $K_n$ matches the dimension of the positive root cone, i.e., the rank of the root system $A_n$. This is in agreement with other work, such as \cite{deloerasturm}. We caution however that we will use collections of trees on $[n+1]$ to label Kostant chambers. 

The number $T_n$ is the number of linear threshold functions on $n$ variables, but the threshold arrangement $\Th_{n+1}$ has rank $n+1$. For example, there are four one-variable threshold functions, $T_1=4$, corresponding to four cones in a plane, and $T_2=14$ counts the two-variable threshold functions, corresponding to fourteen chambers in the arrangement of planes in Figure \ref{fig:thresharr3}. We choose to align our index with the number of variables in the corresponding threshold function, since that convention is well-established in the literature.

The indexing for $R_n$ matches the rank of the resonance arrangement, with $R_1 = 2$, $R_2=6$, $R_3=32$, and so on. This indexing is chosen for our convenience; it differs with some conventions used for counting its chambers, e.g., in \cite{Bil12}, they use $E_2 =2$, $E_3=6$, and so on, $E_n = R_{n-1}$. In that work, the focus was on maximal unbalanced families, and the  subscript on $E_n$ corresponds to the cardinality of the set from which the family of subsets is drawn. That is, $E_n=R_{n-1}$ is the number of maximal unbalanced families formed from an $n$-element set.
\end{remark}

Prior work on estimating $R_n$ and $T_n$ shows that they are both on the order of $2^{n^2}$. In particular, Zuev \cite{Zuev92} shows that for $n\geq 2,$
\[
 2^{n^2(1-10/\ln(n))} < T_n < 2^{n^2},
\]
which implies that 
\[
 \log_2 T_n \sim n^2.
\]
Similarly, Billera et al.\  \cite{Bil12} show that for $n\geq 2$
\[
 2^{\frac{n(n-1)}{2}} < R_n < 2^{n^2},
\] 
implying that $\log_2 R_n \sim cn^2$ for some $1/2\leq c \leq 1$.

One of our results, first observed  by Billera \cite{LouSlides}, is improved bounds on $R_n$, as given here\footnote{We note that Theorem 1.4 of Deza, Pournin, and Rakotonarivo \cite{Dez19} gives a tighter upper bound \\ $R_n\leq 2(n+4)2^{n^2-3n+2},$ also implying that $\log_2 R_n < n^2.$ }.

\begin{thm}\label{thm:Rn}
For any $n\geq 2$,
\begin{equation}\label{eq:ineq}
 \frac{(n+1)}{2^{n+1}}T_n < R_n < \frac{1}{2}T_n,
\end{equation}
and therefore
\[
  n^2 - 10n^2/\ln(n) -n + \log_2(n+1) -1 < \log_2 R_n < n^2-1,
\]
so $\log_2 R_n \sim n^2$.
\end{thm}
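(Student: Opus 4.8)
The plan is to route everything through the threshold arrangement $\Th_{n+1}$. The key structural input, established in Section~\ref{sec:threshold}, is that $\Res_n$ is isomorphic to the restriction of $\Th_{n+1}$ to the hyperplane $H_0=\{\mathbf x\in\R^{n+1}:\sum_i x_i=0\}$, which is itself a member of $\Th_{n+1}$ (its normal $(1,\dots,1)$ is a cube vertex). Since the signed permutation group acts transitively on $\{-1,1\}^{n+1}$, it permutes the $2^n$ hyperplanes of $\Th_{n+1}$ transitively, so in fact $(\Th_{n+1})^H\cong\Res_n$ --- and hence has exactly $R_n$ chambers --- for every hyperplane $H\in\Th_{n+1}$.

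For the upper bound $R_n<\tfrac12 T_n$, I would invoke deletion--restriction for the number of chambers: writing $c(\cdot)$ for the chamber count, $T_n=c(\Th_{n+1}\setminus H_0)+c\bigl((\Th_{n+1})^{H_0}\bigr)=c(\Th_{n+1}\setminus H_0)+R_n$, and $R_n$ also equals the number of chambers of $\Th_{n+1}\setminus H_0$ that $H_0$ actually meets. Hence it suffices to produce one chamber of $\Th_{n+1}\setminus H_0$ that $H_0$ avoids, for then $R_n\le c(\Th_{n+1}\setminus H_0)-1$ and so $T_n>2R_n$. Such a chamber is easy to locate: the $n+1$ hyperplanes $\{2x_j=\sum_i x_i\}$, $j\in[n+1]$, all lie in $\Th_{n+1}\setminus H_0$ (their normals are $\pm1$-vectors different from $\pm(1,\dots,1)$), so the boundary of the open cone $\mathcal U=\{\mathbf x:2x_j<\sum_i x_i\text{ for all }j\}$ lies inside the union of the hyperplanes of $\Th_{n+1}\setminus H_0$; thus any chamber of $\Th_{n+1}\setminus H_0$ meeting $\mathcal U$ is contained in $\mathcal U$, and since $\mathcal U\ne\emptyset$ for $n\ge2$ (it contains $(1,\dots,1)$) such a chamber exists. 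Summing the $n+1$ defining inequalities of $\mathcal U$ gives $(n-1)\sum_i x_i>0$, so $\mathcal U$ --- and the chamber inside it --- lies strictly on the positive side of $H_0$, which therefore avoids it.

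For the lower bound $\tfrac{(n+1)}{2^{n+1}}T_n<R_n$, I would double count incidences between chambers $C$ of $\Th_{n+1}$ and hyperplanes $H\in\Th_{n+1}$ supporting a facet of $C$. By deletion--restriction, each $H$ meets exactly $R_n$ chambers of $\Th_{n+1}\setminus H$ and splits each of these into two, so each $H$ supports facets of exactly $2R_n$ chambers of $\Th_{n+1}$; summing over the $2^n$ hyperplanes gives $2^{n+1}R_n=\sum_C(\text{number of facets of }C)$. Since $\Th_{n+1}$ is essential, every $\overline C$ is a pointed full-dimensional polyhedral cone in $\R^{n+1}$ and so has at least $n+1$ facets, whence $2^{n+1}R_n\ge(n+1)T_n$. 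For $n\ge2$ the inequality is strict, because the constant threshold function $+1$ corresponds to the chamber $\{x_0>|x_1|+\dots+|x_n|\}$, whose closure is the cone over an $n$-dimensional cross-polytope and hence has $2^n>n+1$ facets.

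The two inequalities in \eqref{eq:ineq} now yield the logarithmic bounds: substitute Zuev's estimates $2^{n^2(1-10/\ln n)}<T_n<2^{n^2}$, take base-$2$ logarithms, and isolate $\log_2 R_n$; dividing by $n^2$ and letting $n\to\infty$ gives $\log_2 R_n\sim n^2$. I expect the one mildly delicate point to be the facet-counting step --- checking that a chamber of $(\Th_{n+1})^H$ really is a common facet of exactly two full-dimensional chambers of $\Th_{n+1}$ and that a hyperplane never carries two facets of the same chamber --- while the two explicit chambers (the one missed by $H_0$ and the maximally non-simplicial one) are precisely what is needed to turn both weak inequalities into strict ones.
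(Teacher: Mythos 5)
Your proposal is correct and follows essentially the same route as the paper: both arguments pass through $\Th_{n+1}$, use the sign-flip (hyperoctahedral) symmetry to identify the walls lying on every hyperplane with the chambers of $\Res_n$, and then double count wall--chamber incidences, your deletion--restriction identities being a repackaging of Observation \ref{obs:hyp} together with Corollary \ref{cor:wallbij}. The only real difference is that you supply explicit witnesses for the strictness of both inequalities (the cone $\mathcal{U}$ missed by $V_{\emptyset}$, and the chamber over a cross-polytope with $2^n$ facets), where the paper only remarks that the bounds are not sharp because some chambers have no wall on $V_{\emptyset}$ and the arrangement is not simplicial; this nicely fills in those brief justifications.
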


We also record the following natural inequality relating the number of Kostant chambers to the number of resonance chambers:

\begin{obs} \label{obs:Kn} 
For any $n\geq 2$,
\begin{equation}\label{eq:Kn}
 K_n \leq \frac{R_n}{n+1},
\end{equation}
and in particular,
\[
  \log_2 K_n \leq n^2-1-\log_2(n+1).
\]
\end{obs}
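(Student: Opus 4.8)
The plan is to exploit the fact, recalled in the introduction, that the Kostant chambers $\K_n$ are the maximal cones in the common refinement of all triangulations of the cone over the positive roots of $A_n$, while $\Res_n$ is (isomorphic to) a hyperplane arrangement living in the hyperplane $\{\sum x_i = 0\}$ of $\R^{n+1}$. First I would pin down a single hyperplane arrangement whose chambers refine $\K_n$: every triangulation of the positive root polytope is built from simplices spanned by subsets of the roots $e_i - e_j$ ($i<j$), and each wall appearing in any such triangulation lies on a hyperplane spanned by $n-1$ of these roots. The key observation is that such spanning hyperplanes have normal vectors of the form $\sum_{i \in S} e_i$ for $S \subseteq [n+1]$ — i.e., they are exactly the resonance hyperplanes (restricted to $\sum x_i = 0$). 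Hence $\K_n$ is refined by $\Res_n$, giving $K_n \le R_n$ immediately, but we want the stronger factor-of-$(n+1)$ improvement.

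To get the extra factor of $n+1$, I would use a symmetry argument. The cone over the positive roots is the ``dominant'' piece of a larger fan: the $n+1$ cones obtained from it by the action of the cyclic group $\Z/(n+1)$ (rotating the roles of the coordinates, or equivalently the affine Weyl-type cyclic symmetry) tile — up to lower-dimensional overlaps — a full-dimensional region of $\R^{n+1} \cap \{\sum x_i = 0\}$, and the resonance arrangement $\Res_n$ is invariant under this cyclic action. Therefore the refinement of each of these $n+1$ rotated copies of the positive root cone by $\Res_n$ contributes $K_n$ chambers, these $(n+1)K_n$ chambers are distinct (they lie in distinct rotated cones), and all of them are genuine chambers of $\Res_n$. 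This yields $(n+1) K_n \le R_n$, which is \eqref{eq:Kn}. The logarithmic consequence is then immediate by taking $\log_2$ of \eqref{eq:Kn} and invoking the bound $\log_2 R_n < n^2 - 1$ from Theorem \ref{thm:Rn}:
\[
 \log_2 K_n \le \log_2 R_n - \log_2(n+1) < n^2 - 1 - \log_2(n+1).
\]

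The main obstacle I anticipate is making the cyclic-tiling step fully rigorous: one must check that the $n+1$ rotated copies of the positive root cone have pairwise intersections of lower dimension (so no resonance chamber is double-counted) and that each resonance chamber sitting inside one of these cones really does coincide with a chamber of the global arrangement $\Res_n$ rather than being a union of several. This is where I would need to be careful about whether the walls of the positive root cone itself are resonance hyperplanes — they are, since the facets of the positive root cone are cut out by coordinate-type conditions expressible through subset-sum normals — so the refinement respects the cone boundaries and no merging occurs. Everything else (the identification of triangulation walls with subset-sum hyperplanes, the cyclic invariance of $\Res_n$) is routine once the right normal vectors are written down.
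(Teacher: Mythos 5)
Your argument is correct and follows essentially the same route as the paper: the resonance chambers inside the positive root cone refine the Kostant chambers (the paper gets this from Proposition~\ref{lem:chambersandroots} and Theorem~\ref{thm:deLS}, which encapsulate your observation that all triangulation walls and the facets of the positive root cone lie in subset-sum hyperplanes), and the factor of $n+1$ comes from the cyclic symmetry of $\Res_n$ tiling $V_{\emptyset}$ by the rotated positive root cones (Lemma~\ref{lem:cyclicC} and Corollary~\ref{cor:cyclicsymmetry}), after which the logarithmic bound follows from Theorem~\ref{thm:Rn} exactly as you say.
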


By the numerical evidence in the table we also propose the following problem:

\begin{p}\label{conj:bounds}
Is it true that for any $n\geq 3$,
\begin{equation}\label{eq:conjKn}
R_n < K_{n+1} < \frac{1}{2}T_n,
\end{equation}
and in particular
$\log_2 K_{n+1} \sim n^2$?
\end{p}

As Table \ref{tab:data} shows we have only five data points suggesting a positive answer to Problem \ref{conj:bounds}. We will also  provide  combinatorial models for chambers that makes links between the three sequences seem more plausible.

The method of proof for Theorem \ref{thm:Rn} is to carefully investigate the structure of the hyperplane arrangements $\Th_n$ and $\Res_n$ using the standard notion of a \emph{sign vector} for encoding chambers. As we will see, Observation \ref{obs:Kn} follows readily from chamber combinatorics.

We also put the combinatorics of root polytopes  pioneered by Postnikov \cite{Post09} to use. In particular we consider chambers of $\K_n$ and $\Res_n$ to be labeled by certain sets of \emph{alternating trees}.  We find it useful to define a graph $\Gamma_n$ whose vertices are all alternating trees on $[n]=\{1,2,\ldots,n\}$. We determine the adjacency of two trees via the notion of \emph{sign compatibility}---a purely graph-theoretic condition that implies the corresponding root simplices have full-dimensional intersection. The graph $\Gamma_n$, which we call the \emph{compatibility graph} also has a subgraph $\Gamma_n^+$, with the same adjacency relation, whose vertices are positive alternating trees, which label positive root simplices.  We establish the following result.

\begin{thm}\label{thm:cliques}
The chambers of the resonance arrangement $\Res_n$ can be labeled by cliques in the compatibility graph $\Gamma_n$, and the Kostant chambers $\K_n$ can be labeled by cliques in $\Gamma_n^+$. Moreover, chambers of the resonance arrangement are in bijection with a subset of the maximal cliques in $\Gamma_n$. 
\end{thm}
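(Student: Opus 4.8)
The plan is to set up the correspondence between resonance chambers and sets of alternating trees via the theory of root polytopes, and then translate the geometric condition ``two simplices meet in full dimension'' into the purely combinatorial condition of sign compatibility, so that a chamber becomes a clique. First I would recall Postnikov's framework \cite{Post09}: the (positive) type $A$ root polytope is the convex hull of the roots $e_i - e_j$, a simplex in a triangulation corresponds to an alternating tree on $[n]$ (or $[n+1]$), and two such simplices intersect in their (relative) interiors exactly when a certain local compatibility holds. The key step is to prove the claimed dictionary: the full-dimensional intersection of the cones spanned by two root simplices is detected by \emph{sign compatibility} of the two trees. Given that, a set of pairwise intersecting simplices is precisely a clique in $\Gamma_n$ (resp.\ $\Gamma_n^+$), and a chamber of $\Res_n$ (resp.\ $\K_n$), being a maximal cone in the common refinement of all triangulations, is the intersection of exactly those simplices (one from each triangulation) that contain it — hence determines, and is determined by, such a clique.

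The steps, in order, are: (i) Identify $\Res_n$ with the hyperplanes $\sum_{i \in S} x_i = 0$ over $\emptyset \neq S \subseteq [n]$, and observe that a resonance chamber is a maximal cone on which every Kostant-type count is locally polynomial, equivalently a maximal cone in the common refinement of all triangulations of the full root polytope; do the analogous identification for $\K_n$ with the positive root polytope. (ii) Recall that each triangulation is a set of alternating trees and that every point in general position lies in exactly one simplex of each triangulation; collect, for a fixed chamber, the resulting family of trees. (iii) Show pairwise sign compatibility: any two simplices from this family share the chamber, hence have full-dimensional intersection, which by the geometric lemma (to be proved) forces their trees to be sign compatible — so the family is a clique. (iv) Conversely, show a clique determines a chamber: the intersection of the open cones over a clique of trees is a nonempty open cone, and maximal such intersections are exactly the chambers. (v) For the ``maximal cliques'' statement, argue that each resonance chamber's associated clique is contained in a maximal clique, and that distinct chambers give distinct maximal cliques, so the chambers inject into the maximal cliques of $\Gamma_n$; note that not every maximal clique need arise, which is why we only claim a subset.

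The main obstacle is step (iii)'s geometric lemma — establishing that \emph{sign compatibility} of two alternating trees is equivalent to (or at least implied by, in the direction needed) the root simplices having full-dimensional intersection. This requires understanding precisely when the relative interiors of two simplices in the root polytope overlap: one direction (sign compatible $\Rightarrow$ full-dimensional intersection) should follow from exhibiting an explicit interior point or from Postnikov's reduced-form/shard combinatorics, while the converse requires showing that an incompatibility in signs produces a separating hyperplane among the $\sum_{i\in S} x_i = 0$. I expect this to hinge on a careful case analysis of how the edge sets of the two trees interact on the underlying vertex set, and on the fact that the relevant separating functionals are exactly the $0/\pm 1$ sums defining $\Res_n$. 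A secondary subtlety is the ``subset of maximal cliques'' clause: one must be careful that the map from chambers to maximal cliques is well-defined (a chamber's clique may extend to more than one maximal clique a priori) and injective; resolving this likely uses that a chamber is the intersection of \emph{all} simplices containing it, across \emph{all} triangulations, so its clique is canonically determined and maximal among cliques that are ``realized'' by a common cone.
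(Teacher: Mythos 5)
Your outline for the first two claims (chambers give cliques) is essentially sound: the easy direction of the compatibility lemma is all that is needed there, since if two cones share an open chamber then any sign incompatibility would trap their intersection inside a hyperplane $U'_I$, and the identification of chambers with maximally refined intersections of (positive) root cones is Proposition \ref{lem:chambersandroots} resp.\ the de Loera--Sturmfels theorem. But there are two genuine gaps. First, your step (iv) is false as stated: a clique in $\Gamma_n$, i.e.\ a pairwise sign-compatible family, need \emph{not} have a full-dimensional (or even open) common intersection. The paper exhibits three alternating trees on $[6]$ (Figure \ref{fig:badtriple}) that are pairwise sign compatible yet whose cones meet in a lower-dimensional set, and in $\Gamma_6$ there are $18{,}552$ maximal cliques versus only $11{,}296$ chambers. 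So ``clique $\Rightarrow$ chamber'' fails, and the correspondence must be set up with \emph{indexable} collections (joint full-dimensional intersection), not arbitrary cliques.

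Second, and more seriously, the ``bijection with a subset of the maximal cliques'' clause is exactly the part your proposal does not reach. Your fallback observation that the chamber's clique is canonically determined and ``maximal among cliques realized by a common cone'' only says it is a maximal \emph{indexable} collection, which is immediate; the theorem needs it to be a maximal clique in the graph $\Gamma_n$, i.e.\ that no further tree can be sign compatible with every member even if its cone fails to contain the chamber. Given the failure of step (iv), this is a substantive claim, and it is the content of Theorem \ref{thm:maxclique}. The paper proves it by a flow-modification argument (Lemmas \ref{lem:swapflow} and \ref{lem:?to+}): taking a chamber-interior point $\xx$ with all subset sums nonzero, induced by a positive integer flow on some tree of the collection, one adds an arc closing a cycle, augments the flow around the cycle, and deletes the unique arc whose flow drops to zero, repeatedly, to manufacture a new tree $T'$ with $\sigma_I(T')=+$ still inducing $\xx$; this contradicts maximality of the indexable collection whenever a sign-compatible outsider $T_{\ell+1}$ existed. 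Nothing in your proposal supplies an argument of this kind, and the alternative route in your step (v) (embed the clique into \emph{some} maximal clique) is, as you note, not even well-defined. Finally, the ``hard'' direction you defer (sign compatible $\Rightarrow$ full-dimensional intersection) is proved in the paper via Hoffman's circulation theorem applied to the doubled graph $C(T,T')$ with unit lower bounds; ``exhibiting an explicit interior point'' is precisely that nontrivial step, though strictly speaking this direction is not needed for the theorem as stated.
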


In later sections we propose several problems and questions about characterizing precisely which cliques correspond to the various types of chambers.

\subsection{Organization of the paper}

The paper is divided into four main sections. Section \ref{sec:data} introduces the key data structures that we use for labeling chambers, namely \emph{sign vectors} and \emph{alternating trees}.  
In Section \ref{sec:resonance} we introduce the key definitions for the study of the resonance arrangement and show how to label chambers with both sign vectors and with collections of alternating trees. Section \ref{sec:graph} in particular introduces the graph discussed in Theorem \ref{thm:cliques}. In Section \ref{sec:Kostant} we turn our attention onto the problem of counting Kostant chambers, and we observe that Kostant chambers are unions of resonance chambers in the positive root cone. In Section \ref{sec:threshold}, we turn our focus to the links between the resonance arrangement and the threshold arrangement, culminating in the  proof of Theorem \ref{thm:Rn}.

\section{Data structures for root polytopes and hyperplane arrangements}\label{sec:data}

In this section we establish some basic notions that will be used throughout the paper.

\subsection{Sign vectors}

Let $V$ be a finite-dimensional real vector space. A \emph{hyperplane} $H$ is a subspace of codimension 1. A \emph{hyperplane arrangement} $\Hy = \{ H_i \}_{i\in I}$ is a collection of hyperplanes in $V$ indexed by the set $I$. To each hyperplane we associate a nonzero normal vector $v_i$ so that $H_i = \{ \lambda \in V : \langle \lambda, v_i \rangle = 0\}$. Similarly, let the positive and negative half-spaces of $H_i$ be defined by $H_i^+= \{ \lambda \in V : \langle \lambda, v_i \rangle  > 0\}$ and $H_i^-= \{ \lambda \in V : \langle \lambda, v_i \rangle  < 0\}$.  The \emph{rank} of a hyperplane arrangement is $\dim (\spn\{v_i\}_{i\in I})$. 
  
Following \cite{Abramenko08}, it is known that the hyperplane arrangement $\Hy$ partitions $V$ into a collection of disjoint convex cones called \emph{faces} given by intersections of hyperplanes and their half-spaces. A face $F$ is uniquely determined by its  \emph{sign vector}: \[ \sigma(F) = (\sigma_i(F))_{i \in I},\] where $\sigma_i(F)= +, -,$ or 0 to indicate whether, for points $\lambda \in F$, we have $\langle \lambda, v_i\rangle > 0$, $<0$, or $=0$, respectively. Said another way, \[ F = \bigcap_{i \in I} H_i^{\sigma_i(F)},\] where $H^0_i = H_i$.

There is a natural partial order on faces, given by $F\leq G \Leftrightarrow \overline{F} \subseteq \overline{G}$; that is, if the closure of $F$ is contained in the closure of $G$. In terms of sign sequences, this can be stated as: $F \leq G$ if and only if for each $i \in I$ either $\sigma_i(F) = 0$ or $\sigma_i(F) = \sigma_i(G)$. 

This partial order is ranked by dimension, and maximal faces are called \emph{chambers}. They are characterized by the fact that $\sigma_i(C)\neq 0$ for all $i \in I$. This also means that chambers are the maximal connected components in $V-\Hy$. Codimension one faces are called \emph{walls}. We can see that a face $F$ is a wall whenever $\sigma_i(F)=0$ for precisely one entry in $\sigma(F)$.

For example, in Figure \ref{fig:smallarrangement}, we see a line arrangement with three normal vectors giving rise to six one-dimensional walls and six two-dimensional chambers.

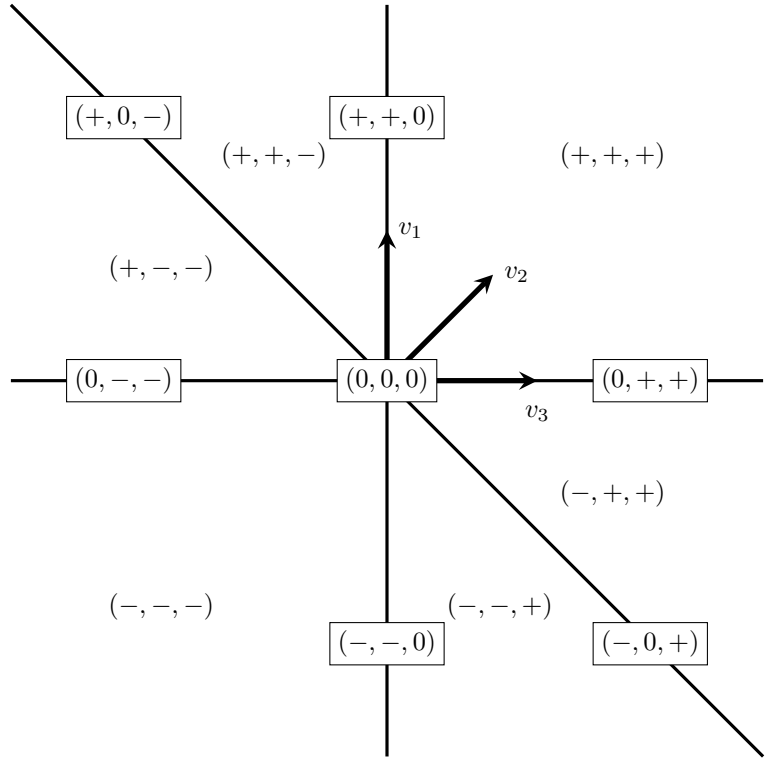
\begin{figure}
\[
\begin{tikzpicture}[>=stealth]
\draw[very thick] (-5,5) -- node[draw=black,thin,fill=white,pos=.15,rectangle,inner sep=3pt] {
$(+,0,-)$
} 
node[draw=black,thin,fill=white,pos=.85,rectangle,inner sep=3pt] {
$(-,0,+)$
} (5,-5);
\draw[very thick] (-5,0) -- node[draw=black,thin,fill=white,pos=.15,rectangle,inner sep=3pt] {
$(0,-,-)$
} 
node[sloped,draw=black,thin,fill=white,pos=.85,rectangle,inner sep=3pt] 
{
$(0,+,+)$
} (5,0);
\draw[very thick] (0,-5) -- node[draw=black,thin,fill=white,pos=.15,rectangle,inner sep=3pt] 
{
$(-,-,0)$
} 
node[draw=black,thin,fill=white,pos=.85,rectangle,inner sep=3pt] 
{
$(+,+,0)$
} 
(0,5);
\draw (3,3) node {
$(+,+,+)$
};
\draw (3,-1.5) node {
$(-,+,+)$
};
\draw (-1.5,3) node {
$(+,+,-)$
};
\draw (1.5,-3) node {
$(-,-,+)$
};
\draw (-3,1.5) node {
$(+,-,-)$
};
\draw (-3,-3) node {
$(-,-,-)$
};
\draw[->,line width =2] (0,0) -- (0,2) node[right] {$v_1$};
\draw[->,line width =2] (0,0) -- (1.41,1.41) node[right] {$v_2$};
\draw[->,line width =2] (0,0) -- (2,0) node[below,yshift=-5pt] {$v_3$};
\draw (0,0) node[inner sep=3,fill=white,draw=black] {
$(0,0,0)$
};
\end{tikzpicture}
\]
\caption{A line arrangement, i.e., a rank two hyperplane arrangement, with faces labeled by sign vectors $(\sigma_1,\sigma_2,\sigma_3)$ corresponding to normal vectors $v_1, v_2, v_3$.}\label{fig:smallarrangement}
\end{figure}

Let $C(\Hy)$ denote the number of chambers in $V-\Hy$. Let $W(H_i)$ denote the number of walls in hyperplane $H_i$, and let $W(\Hy)$ denote the total number of walls in the arrangement. Since each wall  appears in only one hyperplane, $W(\Hy)=\sum_{i\in I} W(H_i)$. Here is an easy observation that holds for any finite hyperplane arrangement.

\begin{obs}\label{obs:hyp}
We have the following facts in any finite hyperplane arrangement of rank $n$:
\begin{enumerate}
\item $2W(H_i) \leq C(\Hy)$, for any hyperplane $H_i$, and 
\item $nC(\Hy) \leq 2W(\Hy)$. 
\end{enumerate}
\end{obs}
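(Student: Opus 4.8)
The plan is to prove both parts by a counting argument based on the sign vectors defined above, using the key structural fact that passing from a chamber to an adjacent chamber across a wall flips exactly one coordinate of the sign vector.

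\medskip

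\noindent\textbf{Part (1).} Fix a hyperplane $H_i$. First I would observe that the walls lying in $H_i$ are precisely the faces $F$ with $\sigma_i(F)=0$ and $\sigma_j(F)\neq 0$ for all $j\neq i$. Given such a wall $F$, define two sign vectors $\sigma^+$ and $\sigma^-$ by keeping all coordinates of $\sigma(F)$ except changing the $i$th coordinate from $0$ to $+$ or $-$, respectively. The point is that each of $\sigma^+$, $\sigma^-$ is realized by a chamber: a point $\lambda_0 \in F$ has $\langle \lambda_0, v_j\rangle \neq 0$ for all $j\neq i$ and $\langle \lambda_0,v_i\rangle = 0$, so for small $\varepsilon$ the points $\lambda_0 \pm \varepsilon v_i$ still satisfy all the strict inequalities coming from $j\neq i$ (by continuity) while picking up $\langle \lambda_0 \pm \varepsilon v_i, v_i\rangle = \pm\varepsilon |v_i|^2 \neq 0$. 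Hence $\sigma^\pm$ is the sign vector of a chamber, and $F$ is a wall of both. This gives a map from walls in $H_i$ to (unordered) pairs of chambers, and it is injective since a wall is determined by its sign vector and $\sigma(F)$ is recovered from $\sigma^+$ (or $\sigma^-$) by zeroing the $i$th coordinate. Since distinct walls give disjoint pairs — actually I only need that the map $F \mapsto \sigma^+$ is injective into the set of chambers — we get $2W(H_i) \leq C(\Hy)$: each wall in $H_i$ accounts for two distinct chambers $\sigma^+ \neq \sigma^-$, and the map $F\mapsto\{\sigma^+,\sigma^-\}$ is injective, and these pairs are in fact disjoint because if two walls $F, F'$ in $H_i$ shared a chamber then that chamber's sign vector would determine a unique wall by zeroing coordinate $i$.

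\medskip

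\noindent\textbf{Part (2).} Now I count incidences between chambers and their walls. Let $C$ be any chamber, with sign vector $\sigma(C)$. For each $i\in I$, changing the $i$th coordinate of $\sigma(C)$ to $0$ yields a sign vector; this is the sign vector of an actual face $F_i \leq C$ (take a point in $C$ and move it toward $H_i$), and $F_i$ has codimension at least one with $\sigma_i(F_i)=0$. The face $F_i$ need not be a wall of the arrangement if other coordinates are also forced to zero; but in rank $n$, the closure $\overline{C}$ is an $n$-dimensional cone, so its boundary is a union of at least $n$ facets (a full-dimensional polyhedral cone in $\R^n$ has at least $n$ facets), and each facet of $\overline{C}$ lies in some hyperplane $H_i$ and is a wall of $\Hy$. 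Thus every chamber has at least $n$ walls on its boundary. Counting pairs (chamber, wall of that chamber): each chamber contributes at least $n$, so the total is at least $nC(\Hy)$; on the other hand each wall $F$ has $\sigma_i(F)=0$ for exactly one $i$, and by the argument in Part (1) it is a wall of exactly the two chambers $\sigma^+, \sigma^-$, so each wall contributes exactly $2$ to the count, giving $2W(\Hy)$. Hence $nC(\Hy) \leq 2W(\Hy)$.

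\medskip

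\noindent\textbf{Main obstacle.} The delicate point is the claim that a chamber has at least $n$ walls on its boundary, i.e., that the full-dimensional cone $\overline{C}\subseteq V\cong\R^n$ has at least $n$ facets, each of which is genuinely a wall of the \emph{arrangement} (not merely a facet of this one cone that happens to be subdivided further — but since chambers are maximal faces, a facet of $\overline{C}$ is automatically a face of the arrangement of codimension one, hence a wall). I would justify "at least $n$ facets" by noting a pointed $n$-dimensional cone needs $\geq n$ facets, and a non-pointed one is a product of a subspace with a lower-dimensional pointed cone in the quotient; in the rank-$n$ arrangement the relevant quotient has the right dimension for the inequality to go through. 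This is the step I expect to need the most care to state cleanly; everything else is bookkeeping with sign vectors and a continuity/perturbation argument.
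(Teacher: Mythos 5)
Your proof is correct and takes essentially the same approach as the paper's: part (1) by toggling the unique zero coordinate of a wall's sign vector to obtain the two chambers it bounds (with the injectivity/disjointness check), and part (2) by double-counting chamber--wall incidences, using that each wall bounds exactly two chambers and each chamber of a rank-$n$ arrangement has at least $n$ walls. The paper's proof is terser---it simply asserts the ``at least $n$ walls'' claim---whereas you sketch a justification via the lineality space $\bigcap_i H_i$ and the pointed $n$-dimensional quotient cone, which is sound.
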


\begin{proof}
Consider a wall $F$ with $\sigma_i(F) =0$ and all other sign vector entries nonzero. This face lies on the boundary of two chambers, each obtained by keeping the nonzero entries fixed and choosing $\sigma_i$ to be $+$ or $-$. Thus there are at least two distinct chambers for each wall of $H_i$. This proves the first observation.

For the second observation, we notice that in a rank $n$ arrangement, every chamber must have at least $n$ walls on its boundary, while each wall is on the boundary of precisely two chambers.
\end{proof}

In Section \ref{sec:threshold} we will exploit Observation \ref{obs:hyp} to prove Theorem \ref{thm:Rn}.

\subsection{Alternating trees}

Here we discuss a combinatorial data structure arising in Postnikov's work on root polytopes, which we will use in labeling both Kostant chambers and resonance chambers. Recall that the type $A_{n-1}$ root system is the set of vectors $\Phi=\{ e_i-e_j : 1\leq i,j \leq n, i \neq j\}$, with positive roots $\Phi^+ = \{ e_i-e_j : 1\leq i <j \leq n\}$. The linear span of the roots will be denoted by $V_{\emptyset} = \{ \mathbf{x} \in \R^n : \sum x_i =0\}$, which is a hyperplane of $\R^n$ that will be of interest to us later.

\begin{defn}[Root polytope]\label{def:rootpoly}
Given a directed graph $G$ on the vertex set $[n]$, with arc set $E(G)$, we associate to it the \textbf{root polytope} 
\begin{equation}\label{eq:rootpoly}
\mathcal{P}(G)=\text{conv}\{0, e_i-e_j: (i,j) \in E(G)\}. 
\end{equation}
\end{defn}

Two special cases of \eqref{eq:rootpoly} are as follows. If we take $G$ to be the complete graph $\kg_n$ (we use boldface to distinguish from the number of Kostant chambers $K_n$), then the polytope $\P(\kg_n)$ is the convex hull of all roots, which we refer to as the \emph{full root polytope}. If we let $\kg_n^{+}$ denote the complete graph on $[n]$ with edges only directed from smaller vertices to larger, then $\P(\kg_n^+)$ is the convex hull of the positive roots, which we call the \emph{positive root polytope}. Note that since roots live in the hyperplane $V_{\emptyset}$, the polytopes $\P(\kg_n)$ and $\P(\kg_n^+)$ are $(n-1)$-dimensional. We see these polytopes for $n=3$ in Figure \ref{fig:rootpoly2d}. 

\begin{lem} \label{lem:sim} (cf. \cite[Lemma 12.5]{Post09}) 
Given a directed graph $G$, the root polytope $\P(G)$ is a simplex with the origin as one of the vertices if and only if $G$ is acyclic. When $G$ is acyclic, the dimension of $\P(G)$ is the number of edges of $G$.
\end{lem}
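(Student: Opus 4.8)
The plan is to prove both directions by relating the vectors $\{e_i - e_j : (i,j) \in E(G)\}$, together with $0$, to the incidence structure of $G$. The key linear-algebraic fact I would isolate first is the following: for a directed graph $G$ on $[n]$ with $m$ edges, the vectors $\{e_i - e_j : (i,j)\in E(G)\}$ are linearly independent if and only if $G$ is a forest (this is the standard fact that the rank of the incidence matrix of a graph with $c$ connected components on $n$ vertices is $n-c$, so a graph with $m$ edges has independent edge-vectors exactly when $m = n - c$, i.e. when it is acyclic). I would state and prove this first, either by the rank-of-incidence-matrix argument or directly: a nontrivial linear dependence $\sum_{(i,j)} c_{ij}(e_i-e_j)=0$ with not all $c_{ij}=0$ yields, after deleting zero-coefficient edges, a subgraph in which every vertex has its signed degree zero, which forces the existence of a cycle.

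Next I would translate "simplex with the origin as a vertex" into independence. A polytope $\text{conv}\{0, v_1,\dots,v_m\}$ with the $v_k$ distinct and nonzero is an $m$-dimensional simplex having $0$ as a vertex precisely when $\{v_1,\dots,v_m\}$ is a set of $m$ linearly independent vectors; indeed, a simplex on $m+1$ vertices is $m$-dimensional, and for it to be a genuine simplex the $m+1$ points must be affinely independent, which (since one of them is the origin) is equivalent to the other $m$ being linearly independent. Conversely, if those $m$ vectors are linearly independent then $\{0, v_1,\dots,v_m\}$ is affinely independent, so its convex hull is an $m$-simplex with $0$ as a vertex. Combining this with the first step: $\P(G)$ is a simplex with $0$ a vertex $\iff$ the edge-vectors are linearly independent $\iff$ $G$ is acyclic, and in that case $\dim \P(G)$ equals the number of edges.

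I would also need to dispatch the degenerate possibility that is hiding in the phrase "with the origin as one of the vertices": when $G$ has a cycle, I should confirm $\P(G)$ is genuinely \emph{not} a simplex with the origin as a vertex, not merely that affine independence fails for one particular ordering. Here a dependence among the edge-vectors means $\dim \P(G) < m$, so $\P(G)$ has at most $m$ vertices among the $m+1$ points $\{0\}\cup\{e_i-e_j\}$; hence it cannot be an $m$-simplex, and in fact it is not a simplex with origin as a vertex at all (a $d$-simplex has exactly $d+1$ vertices, and removing the origin would leave at most $d$ of the remaining points, which cannot contain a basis direction for each of the $d$ dimensions if the edge vectors already fail to be independent). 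This is the only slightly delicate point; I expect it to be the main obstacle, and it is handled cleanly once one observes that the edge-vectors themselves already affinely span (in fact linearly span) the affine hull of $\P(G)$ since $0\in\P(G)$, so the dimension count is forced.

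Finally, for the dimension claim when $G$ is acyclic: by the above, the $m$ edge-vectors are independent, so $\{0,v_1,\dots,v_m\}$ is a set of $m+1$ affinely independent points, giving $\dim \P(G) = m = |E(G)|$. I would remark that this matches the fact that an acyclic graph on $[n]$ has at most $n-1$ edges, so $\P(G)$ is at most $(n-1)$-dimensional, consistent with $\P(\kg_n^+)$ and $\P(\kg_n)$ being $(n-1)$-dimensional as already noted. The whole argument is short and self-contained given the incidence-matrix rank fact, which is the one external ingredient I would invoke (or prove inline in two lines).
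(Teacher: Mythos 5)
Your overall skeleton is the right one, and it is essentially the standard route (the paper itself gives no proof here, deferring to Postnikov's Lemma 12.5 and inviting the reader to check the generalization to arbitrary orientations): edge vectors of a forest are linearly independent, affinely independent generators give a simplex with $0$ as a vertex of dimension $|E(G)|$, and your incidence-matrix/signed-degree argument for ``independent iff the underlying graph is a forest'' is fine. The gap is in the converse, exactly at the point you yourself flagged as delicate. Your claim that a linear dependence among the $e_i-e_j$ already rules out $\mathcal{P}(G)$ being a (possibly lower-dimensional) simplex with $0$ as a vertex is not established by your dimension count, and the parenthetical argument (``removing the origin would leave at most $d$ of the remaining points, which cannot contain a basis direction for each of the $d$ dimensions'') is a non sequitur: dependent edge vectors of rank $d$ certainly do contain $d$ linearly independent ones spanning their span, so no contradiction arises from counting alone. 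Indeed, the general statement you are implicitly using is false: $\mathrm{conv}\{0,\,e_1,\,e_2,\,\tfrac13(e_1+e_2)\}$ is a $2$-simplex with $0$ as a vertex even though the three nonzero generators are linearly dependent. So a cycle in $G$ does not, by convexity and dimension counting alone, preclude $\mathcal{P}(G)$ from being a simplex in which some generators are non-vertices; note also that your write-up never separately treats the directed-cycle case, where it is $0$ itself that fails to be a vertex.

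What is missing is the root-specific fact that every generator $e_i-e_j$ with $(i,j)\in E(G)$ is a vertex of $\mathcal{P}(G)$: for any other root $e_k-e_l$ one has $\langle e_k-e_l,\, e_i-e_j\rangle \le 1 < 2 = \langle e_i-e_j,\, e_i-e_j\rangle$, and $\langle 0, e_i-e_j\rangle=0$, so $e_i-e_j$ is the unique maximizer of a linear functional over $\mathcal{P}(G)$ (alternatively: all roots lie on the sphere of radius $\sqrt{2}$ centered at $0$, so none is a convex combination of the other generators). With this in hand the ``only if'' direction is immediate and uniform: if $\mathcal{P}(G)$ were a simplex having $0$ among its vertices, its vertex set would be exactly $\{0\}\cup\{e_i-e_j : (i,j)\in E(G)\}$, and the vertex set of a simplex is affinely independent, so the edge vectors would be linearly independent and the underlying graph a forest; this also disposes of the directed-cycle case, where $0$ is not even a vertex. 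So your proof is fixable with one short extra lemma, but as written the key step of the converse does not go through.
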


Given an acyclic graph $F$, let $\Delta_F:=\P(F)$ to emphasize that $\P(F)$ is a simplex. (We remark that this notation differs from Postnikov, who uses $\tilde{\Delta}_F$ for our $\Delta_F$.) As maximal acyclic graphs, trees will be of particular interest.

\begin{figure}
\[
 \begin{array}{c c}
  \begin{tikzpicture}[cm={1,0,.5,.8660254,(0,0)}, >=stealth,baseline=2.6cm, scale=3]
       \filldraw[fill=black!20!white] (0,1) node[circle,fill=black,inner sep=2] {}--(1,0) node[circle,fill=black,inner sep=2] {}--(1,-1) node[circle,fill=black,inner sep=2] {}--(0,-1) node[circle,fill=black,inner sep=2] {} --(-1,0) node[circle,fill=black,inner sep=2] {}--(-1,1) node[circle,fill=black,inner sep=2] {}--(0,1)--cycle;
       \draw (0,1) node[above] {$e_1-e_3$};
       \draw (1,0) node[right] {$e_2-e_3$};
       \draw (-1,1) node[above] {$e_1-e_2$};
       \draw (1,-1) node[below] {$e_2-e_1$};
       \draw (0,-1) node[below] {$e_3-e_1$};
       \draw (-1,0) node[left] {$e_3-e_2$};
       \draw (0,1)--(0,-1);
       \draw (1,0)--(-1,0);
       \draw (-1,1)--(1,-1);
       \draw (0,0) node[circle,fill=black, inner sep=2] {} node[below left, xshift=-5pt] {$0$};
\draw (.35,.35) node[scale=.75] {
       \begin{tikzpicture}[baseline=0,>=stealth]
\foreach \x in {1,2,3}{
\coordinate (\x) at (\x,0);
}
 \draw (1) node[below,fill=white,draw=black,circle,inner sep =2] {};
 \draw (2) node[below,fill=white,draw=black,circle,inner sep =2] {};
 \draw (3) node[below,fill=black,draw=black,circle,inner sep =2] {};
\draw[->] (1) to [in=90,out=90]  (3);
\draw[->] (2) to [in=90,out=90]  (3);
\end{tikzpicture}
       };
\draw (-.4,.8) node[scale=.75] {
       \begin{tikzpicture}[baseline=0,>=stealth]
\foreach \x in {1,2,3}{
\coordinate (\x) at (\x,0);
}
 \draw (1) node[below,fill=white,draw=black,circle,inner sep =2] {};
 \draw (2) node[below,fill=black,draw=black,circle,inner sep =2] {};
 \draw (3) node[below,fill=black,draw=black,circle,inner sep =2] {};
\draw[->] (1) to [in=90,out=90]  (2);
\draw[->] (1) to [in=90,out=90]  (3);
\end{tikzpicture}
       };
\draw (-.4,-.2) node[scale=.75] {
       \begin{tikzpicture}[baseline=0,>=stealth]
\foreach \x in {1,2,3}{
\coordinate (\x) at (\x,0);
}
 \draw (1) node[below,fill=black,draw=black,circle,inner sep =2] {};
 \draw (2) node[below,fill=black,draw=black,circle,inner sep =2] {};
 \draw (3) node[below,fill=white,draw=black,circle,inner sep =2] {};
\draw[->] (3) to [in=90,out=90]  (1);
\draw[->] (3) to [in=90,out=90]  (2);
\end{tikzpicture}
       };
       \draw (-.65,.3) node[scale=.75] {
       \begin{tikzpicture}[baseline=0,>=stealth]
\foreach \x in {1,2,3}{
\coordinate (\x) at (\x,0);
}
 \draw (1) node[below,fill=white,draw=black,circle,inner sep =2] {};
 \draw (2) node[below,fill=black,draw=black,circle,inner sep =2] {};
 \draw (3) node[below,fill=white,draw=black,circle,inner sep =2] {};
\draw[->] (3) to [in=90,out=90]  (2);
\draw[->] (1) to [in=90,out=90]  (2);
\end{tikzpicture}
       };
       \draw (.65,-.25) node[scale=.75] {
       \begin{tikzpicture}[baseline=0,>=stealth]
\foreach \x in {1,2,3}{
\coordinate (\x) at (\x,0);
}
 \draw (1) node[below,fill=black,draw=black,circle,inner sep =2] {};
 \draw (2) node[below,fill=white,draw=black,circle,inner sep =2] {};
 \draw (3) node[below,fill=black,draw=black,circle,inner sep =2] {};
\draw[->] (2) to [in=90,out=90]  (3);
\draw[->] (2) to [in=90,out=90]  (1);
\end{tikzpicture}
       };
       \draw (.35,-.75) node[scale=.75] {
       \begin{tikzpicture}[baseline=0,>=stealth]
\foreach \x in {1,2,3}{
\coordinate (\x) at (\x,0);
}
 \draw (1) node[below,fill=black,draw=black,circle,inner sep =2] {};
 \draw (2) node[below,fill=white,draw=black,circle,inner sep =2] {};
 \draw (3) node[below,fill=white,draw=black,circle,inner sep =2] {};
\draw[->] (3) to [in=90,out=90]  (1);
\draw[->] (2) to [in=90,out=90]  (1);
\end{tikzpicture}
       };
\end{tikzpicture}
 &
\begin{tikzpicture}[cm={1,0,.5,.8660254,(0,0)}, >=stealth, baseline=4cm, scale=3]
       \filldraw[fill=black!20!white] (0,1) node[circle,fill=black,inner sep=2] {}--(1,0) node[circle,fill=black,inner sep=2] {}--(0,0) node[circle,fill=black,inner sep=2] {}--(-1,1) node[circle,fill=black,inner sep=2] {} --(0,1)--cycle;
       \draw (0,1) node[above] {$e_1-e_3$};
       \draw (1,0) node[below] {$e_2-e_3$};
       \draw (-1,1) node[above] {$e_1-e_2$};
       \draw (0,0)--(0,1);
       \draw (0,0) node[below left] {$0$};
       \draw (.35,.35) node[scale=.75] {
       \begin{tikzpicture}[baseline=0,>=stealth]
\foreach \x in {1,2,3}{
\coordinate (\x) at (\x,0);
}
 \draw (1) node[below,fill=white,draw=black,circle,inner sep =2] {};
 \draw (2) node[below,fill=white,draw=black,circle,inner sep =2] {};
 \draw (3) node[below,fill=black,draw=black,circle,inner sep =2] {};
\draw[->] (1) to [in=90,out=90]  (3);
\draw[->] (2) to [in=90,out=90]  (3);
\end{tikzpicture}
       };
\draw (-.4,.8) node[scale=.75] {
       \begin{tikzpicture}[baseline=0,>=stealth]
\foreach \x in {1,2,3}{
\coordinate (\x) at (\x,0);
}
 \draw (1) node[below,fill=white,draw=black,circle,inner sep =2] {};
 \draw (2) node[below,fill=black,draw=black,circle,inner sep =2] {};
 \draw (3) node[below,fill=black,draw=black,circle,inner sep =2] {};
\draw[->] (1) to [in=90,out=90]  (2);
\draw[->] (1) to [in=90,out=90]  (3);
\end{tikzpicture}
       };
\end{tikzpicture} \\
\P(\kg_3) & \P(\kg_3^+)
 \end{array}
\]
\caption{Central triangulations of the full root polytope $\P(\kg_3)$ and the positive root polytope $\P(\kg_3^+)$, drawn in the plane $x_1 + x_2 + x_3 = 0$.}\label{fig:rootpoly2d}
\end{figure}
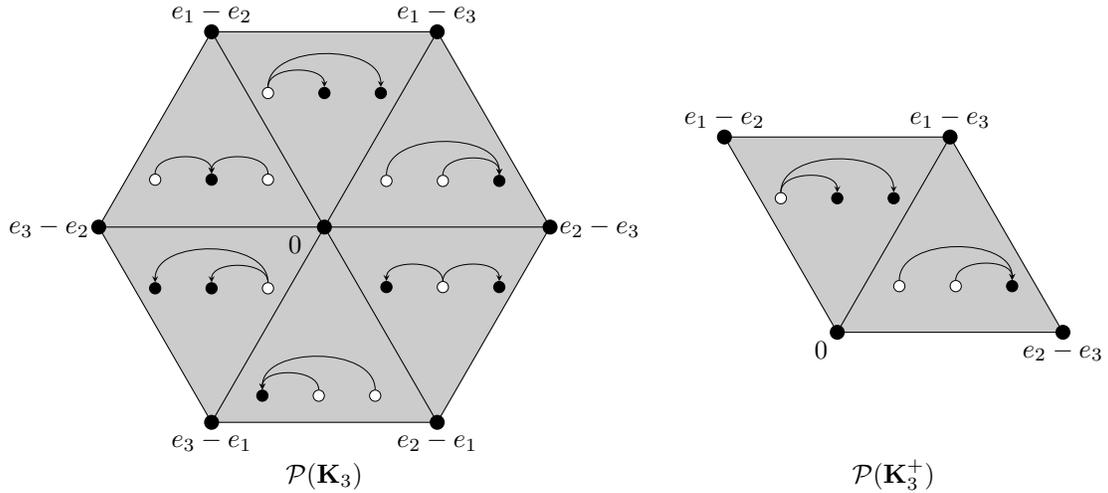

\begin{defn}[Alternating graph] \label{def:alt} A directed graph $G$ is {\bf alternating} if each vertex is either a source (all outgoing arcs) or a sink (all incoming arcs). A directed graph $G$ on $[n]$ is \textbf{positive alternating} if it is alternating and all arcs are of the form $(i,j)$ for some $i<j$.
\end{defn}

For example, in Figure \ref{fig:alternatingtrees} we see a tree $T$ that is alternating but not positive alternating and another tree $T'$ that is positive alternating. In examples such as these we label the sources with white nodes and the sinks with black nodes for easy identification.

\begin{figure}
\[
\begin{array}{c c c}
\begin{tikzpicture}[baseline=0,>=stealth,scale=.75]
\foreach \x in {1,...,7}{
\coordinate (\x) at (\x,0);
}
\foreach \x in {1,4,6,7}{
 \draw (\x) node[below,fill=white,draw=black,circle,inner sep =2] {} node[below,yshift=-5pt] {$\x$};
}
\foreach \x in {2,3,5}{
 \draw (\x) node[below,fill=black,draw=black,circle,inner sep =2] {} node[below,yshift=-5pt] {$\x$};
}
\draw[->] (1) to [in=90,out=90]  (2);
\draw[->] (1) to [in=90,out=90]  (3);
\draw[->] (4) to [in=90,out=90]  (3);
\draw[->] (4) to [in=90,out=90]  (5);
\draw[->] (6) to [in=90,out=90]  (5);
\draw[->] (7) to [in=90,out=90]  (2);
\end{tikzpicture}
& \qquad &
\begin{tikzpicture}[baseline=0,>=stealth,scale=.75]
\foreach \x in {1,...,7}{
\coordinate (\x) at (\x,0);
}
\foreach \x in {1,2,4,5}{
 \draw (\x) node[below,fill=white,draw=black,circle,inner sep =2] {} node[below,yshift=-5pt] {$\x$};
}
\foreach \x in {3,6,7}{
 \draw (\x) node[below,fill=black,draw=black,circle,inner sep =2] {} node[below,yshift=-5pt] {$\x$};
}
\draw[->] (1) to [in=90,out=90]  (7);
\draw[->] (1) to [in=90,out=90]  (3);
\draw[->] (4) to [in=90,out=90]  (7);
\draw[->] (4) to [in=90,out=90]  (6);
\draw[->] (5) to [in=90,out=90]  (6);
\draw[->] (2) to [in=90,out=90]  (7);
\end{tikzpicture}\\
T & & T'
\end{array}
\]
\caption{Two alternating trees on 7 nodes. Tree $T$ is alternating but not positive alternating, while $T'$ is positive alternating.}\label{fig:alternatingtrees}
\end{figure}
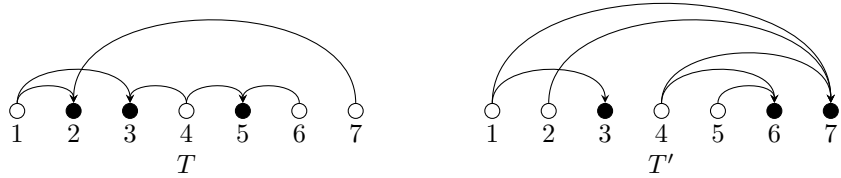

For any undirected tree on $[n]$, the identification of node 1 as a source or sink determines the direction of all arcs in an alternating tree. Thus there are precisely two alternating trees with the same undirected tree structure. As there are $n^{n-2}$ undirected trees by Cayley's Theorem, there are $2n^{n-2}$ alternating trees on $[n]$. The number of positive alternating trees on $[n]$ is:
\[
 \frac{ 1}{n2^{n-1}}\sum_{k=1}^{n} \binom{n}{k} k^{n-1},
\]
though the formula is not as simple to explain. See \cite{Chauve}.

For the purposes of the current paper, a \emph{triangulation} of a polytope $\P = \text{conv}\{0, v_1, \ldots, v_n\}$ with vertices $\{v_1, \ldots, v_n\}$ is a simplicial complex such that the union of the top dimensional simplices  of the simplicial complex is the polytope $\P$ and so that the  simplices  only use  the vectors in $\{0, v_1,\ldots,v_n\}$ as vertices. A triangulation is called \emph{central} if every top dimensional simplex contains the origin, and we call a top dimensional simplex in such a triangulation a \emph{central simplex}. In what follows we are only concerned with top dimensional simplices.

\begin{lem}(cf. \cite[Lemma 13.2]{Post09}) \label{lem:tri}  
Every top dimensional simplex in a central triangulation of $\P(\kg_n)$ is of the form 
 $\Delta_T$ for some alternating tree $T$ on the vertex set $[n]$.  Every top dimensional simplex in a central triangulation of $\P(\kg_n^+)$ is of the form 
 $\Delta_T$ for some positive alternating tree $T$ on the vertex set $[n]$.
\end{lem}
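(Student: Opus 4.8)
The plan is to start from an arbitrary top-dimensional simplex $\Delta$ of a central triangulation $\mathcal{T}$, read off the directed graph $G$ on $[n]$ whose arcs are the non-origin vertices of $\Delta$, use Lemma \ref{lem:sim} to see that $G$ is a spanning tree, and then rule out the possibility that some vertex of $G$ is neither a source nor a sink by a local argument inside the polytope. The positive-root case will follow with no extra work: every simplex in a triangulation of $\P(\kg_n^+)$ has its non-origin vertices in $\Phi^+$, so the associated $G$ automatically lies in $\kg_n^+$, and then ``alternating'' upgrades to ``positive alternating'' for free.

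First I would fix notation. Let $\Delta$ be a top-dimensional simplex of a central triangulation $\mathcal{T}$ of $\P(\kg_n)$. By the definition of a central triangulation the origin is a vertex of $\Delta$, and since $\P(\kg_n)$ is $(n-1)$-dimensional the remaining $n-1$ vertices are distinct roots $e_{i_k}-e_{j_k}$; let $G$ be the directed graph on $[n]$ with these arcs, so $\Delta=\Delta_G$. By Lemma \ref{lem:sim}, $G$ is acyclic and $|E(G)|=\dim\Delta_G=n-1$, hence $G$ is a spanning tree of $[n]$; in the positive case the roots involved all lie in $\Phi^+$, so $G\subseteq\kg_n^+$. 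It remains only to show $G$ is alternating.

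Suppose not: fix a vertex $v$ of $G$ with an incoming arc $(u,v)$ and an outgoing arc $(v,w)$ (with $u<v<w$ in the positive case); acyclicity makes $u,v,w$ distinct and $(u,w)\notin E(G)$. Put $a=e_u-e_v$, $b=e_v-e_w$, $d=a+b=e_u-e_w$. Then $a,b$ are vertices of $\Delta_G$, the segment $\tau=\mathrm{conv}\{a,b\}$ is an edge of $\Delta_G$, hence a face of $\mathcal{T}$, and its midpoint $\tfrac{d}{2}=\tfrac{a+b}{2}$ lies in the relative interior of $\tau$. On the other hand $d$ is a root, so $d\in\P(\kg_n)$ (and $d\in\P(\kg_n^+)$ since $u<w$), whence $[0,d]$ lies in the polytope; for small $\epsilon>0$ set $d_\epsilon:=(\tfrac12+\epsilon)d$, which lies on $[0,d]\subseteq|\mathcal{T}|$ and tends to $\tfrac{d}{2}$ as $\epsilon\to0^+$. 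Since $\tfrac{d}{2}$ is in the relative interior of the face $\tau$ of the simplicial complex $\mathcal{T}$, the open star of $\tau$ is an open neighborhood of $\tfrac{d}{2}$ in $|\mathcal{T}|$, so for $\epsilon$ small enough $d_\epsilon$ lies in a top simplex $\Delta_H$ of $\mathcal{T}$ having $\tau$ as a face. In particular $a,b$ are vertices of $\Delta_H$, and $H$ is a spanning tree (first paragraph), so the non-origin vertices of $\Delta_H$—the edge vectors of $H$—are linearly independent. Consequently $d_\epsilon=(\tfrac12+\epsilon)a+(\tfrac12+\epsilon)b$ is the unique expression of $d_\epsilon$ as a linear combination of those edge vectors, so in the unique affine expression of $d_\epsilon$ in the vertices $\{0\}\cup\{\text{edge vectors of }H\}$ the coefficient of the origin is $1-2(\tfrac12+\epsilon)=-2\epsilon<0$. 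Hence $d_\epsilon\notin\Delta_H$, a contradiction. Therefore $G$ is alternating, and positive alternating in the positive-root case.

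I expect the one delicate point to be the local step: concluding that $d_\epsilon$ lies in a top simplex of $\mathcal{T}$ that has the whole edge $\tau$ as a face, rather than merely a simplex meeting $\tau$. This is where the fact that $\mathcal{T}$ is a genuine geometric simplicial complex is used, via the standard description $\mathrm{st}(\tau)=|\mathcal{T}|\setminus\bigcup\{\sigma\in\mathcal{T}:\tau\not\subseteq\sigma\}$ of the open star as an open neighborhood of every point of $\mathrm{relint}(\tau)$. Everything else is either bookkeeping (matching $\Delta$ with $\Delta_G$ and invoking Lemma \ref{lem:sim}) or the one-line observation that a tree's edge vectors are linearly independent; and the positive-root statement needs no separate argument, since $u<v<w$ keeps $a$, $b$, and $d$ inside $\P(\kg_n^+)$.
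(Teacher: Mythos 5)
Your argument is correct, and it is worth pointing out that the paper does not actually prove this lemma: it only cites Postnikov's Lemma 13.2 and remarks that his proof (written for edges directed from smaller to larger vertices) generalizes to arbitrarily directed edges. So what you have produced is a genuinely self-contained proof rather than a reconstruction of the paper's argument. Your route --- reduce via Lemma \ref{lem:sim} to a spanning tree $G$, then kill a non-alternating vertex by pushing the point $d_\epsilon=(\tfrac12+\epsilon)(a+b)$ just past the midpoint of the edge $\tau=\mathrm{conv}\{a,b\}$ and reading off a negative barycentric coordinate on the origin in any central simplex having $\tau$ as a face --- is sound, and it treats $\P(\kg_n)$ and $\P(\kg_n^+)$ uniformly, with the positive case free because $u<v<w$ keeps $e_u-e_w$ in $\P(\kg_n^+)$. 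Two small points to tighten. First, the open-star fact gives you $d_\epsilon\in\mathrm{relint}(\sigma)$ for some $\sigma\in\mathcal{T}$ with $\tau\subseteq\sigma$, but passing to a \emph{top-dimensional} $\Delta_H\supseteq\sigma$ needs one more line, since the paper's definition of triangulation does not explicitly assert purity: pick any top simplex $\Delta_H$ containing $d_\epsilon$ (they cover the polytope); then $\sigma\cap\Delta_H$ is a common face meeting $\mathrm{relint}(\sigma)$, hence equals $\sigma$, so $\tau\subseteq\Delta_H$ as needed. Second, you correctly read ``central'' as ``the origin is a vertex of every top simplex'' (the intended meaning, consistent with Lemma \ref{lem:sim} and with Postnikov); and the observation $(u,w)\notin E(G)$ is never actually used --- if $a,b$ are edge vectors of $H$ then $e_u-e_w$ cannot also be one, by linear independence, and the barycentric computation goes through regardless. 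With that one-line purity patch, your proof is complete and could serve as the omitted argument.
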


In Figure \ref{fig:rootpoly2d} we see the alternating trees that label the central triangulations of $\P(\kg_3)$ and $\P(\kg_3^+)$.

We remark that Definition \ref{def:alt} generalizes the notion of alternating introduced by Postnikov \cite[Definition 13.1]{Post09}, which was only defined for graphs where all the edges are directed from smaller to larger vertices. Lemma \ref{lem:sim} and Lemma \ref{lem:tri} generalize Lemmas 12.5 and 12.6 from Postnikov's beautiful paper \cite{Post09}. We invite the interested reader to check that Postnikov's proofs of the aforementioned lemmas readily lend themselves to generalization to our case of arbitrarily directed edges.

\subsection{Flows and root cones}

While alternating trees were designed to capture the geometry of triangulations of root polytopes, we can use the same data structure to study chamber geometry, by taking the cone over a root polytope.

\begin{defn}[Flows and root cones]\label{def:rootcone}
Suppose $G$ is a directed graph on vertex set $[n]$. A \textbf{nonnegative flow} on $G$ is a nonnegative labeling of the arcs of $G$, $f: E(G) \to \R_{\geq 0}$. Any such flow \textbf{induces a point} $\mathbf{x}=\mathbf{x}(G;f) = (x_1,\ldots,x_n) \in \R^n$ by 
\[
x_i = \sum_{(i,j) \in E(G)} f(i,j) - \sum_{(k,i)\in E(G)} f(k,i).
\]
The collection of all points induced by flows in this way make up the \textbf{root cone} for $G$:
\begin{equation}\label{eq:rootcone}
\C(G)=\left\{ \sum_{(i,j)\in E(G)} f(i,j)(e_i-e_j) : f(i,j) \geq 0 \right\}. 
\end{equation}
\end{defn}

It is easy to verify that the combinatorial structure of the faces of the  root simplices which contain the origin is the same as the combinatorial structure of the faces of the of root cones.   We now state a few properties about the geometry of root cones.

For any alternating graph $G$, the point induced by the flow satisfies
\[
 x_i = \begin{cases} \sum_{(i,j) \in E(G)} f(i,j) & \mbox{if $i$ is a source,} \\
   -\sum_{(k,i) \in E(G)} f(k,i) & \mbox{if $i$ is a sink.}
   \end{cases}
\]
In particular, the coordinates corresponding to sources are always nonnegative and the coordinates of sinks are always nonpositive. For example, below is a flow on the tree $T$ from Figure \ref{fig:alternatingtrees}:
\[
\begin{tikzpicture}[baseline=0,>=stealth,xscale=1.3]
\foreach \x in {1,...,7}{
\coordinate (\x) at (\x,0);
}
\foreach \x in {1,4,6,7}{
 \draw (\x) node[below,fill=white,draw=black,circle,inner sep =2] {} node[below,yshift=-5pt] {$\x$};
}
\foreach \x in {2,3,5}{
 \draw (\x) node[below,fill=black,draw=black,circle,inner sep =2] {} node[below,yshift=-5pt] {$\x$};
}
\draw[->] (1) to [in=90,out=90] node[midway,inner sep=2,fill=white,draw=black] {$a$} (2);
\draw[->] (1) to [in=90,out=90] node[midway,inner sep=2,fill=white,draw=black] {$b$} (3);
\draw[->] (4) to [in=90,out=90] node[midway,inner sep=2,fill=white,draw=black] {$c$} (3);
\draw[->] (4) to [in=90,out=90] node[midway,inner sep=2,fill=white,draw=black] {$d$} (5);
\draw[->] (6) to [in=90,out=90] node[midway,inner sep=2,fill=white,draw=black] {$e$} (5);
\draw[->] (7) to [in=90,out=90] node[midway,inner sep=2,fill=white,draw=black] {$f$} (2);
\end{tikzpicture}
\]
It induces the point
\[
 \mathbf{x}=(a+b,-a-f,-b-c,c+d,-d-e,e,f).
\]

Let $T$ and $T'$ be two alternating trees on $[n]$. We will be immensely interested in how the root cones $\C(T)$ and $\C(T')$ intersect. Adapting an idea of Postnikov \cite[Section 12]{Post09} helps us give a simple graph-theoretic criterion for when the interiors of two root cones overlap, which we now explain. 

Let
\[
C(T, T'):=\{(i, j): (i, j)\in E(T)\} \cup \{(i, j): (j, i)\in E(T')\},
\] following Postnikov \cite[Section 12]{Post09}.\footnote{This notation diverges slightly from Postnikov, who uses the notation $U(T,T')$ for this graph. We use $C$ to connote the word ``circulation'' as well as to avoid conflict with later notation.}
See Figure \ref{fig:UTT}, where we draw the arcs of $T$ above the nodes and the reverse of the arcs of $T'$ below.

One way to explain when two cones overlap involves the notion of a special kind of flow known as a \emph{circulation} of a directed graph $G$, i.e., a nonnegative flow that satisfies \emph{conservation of flow} at each vertex $i$:
 \[
   \sum_{(i,j) \in E(G)} f(i,j) = \sum_{(k,i) \in E(G)} f(k,i).
 \]
Note that a flow is a circulation if and only if it induces the point $(0,0,\ldots,0)$.

Circulation on an alternating graph is trivial, since all vertices are either sources or sinks, and hence the only flow to satisfy conservation of flow is the zero flow. But by considering circulation on $C(T, T')$, we can study flows for pairs of alternating trees that induce the same point.

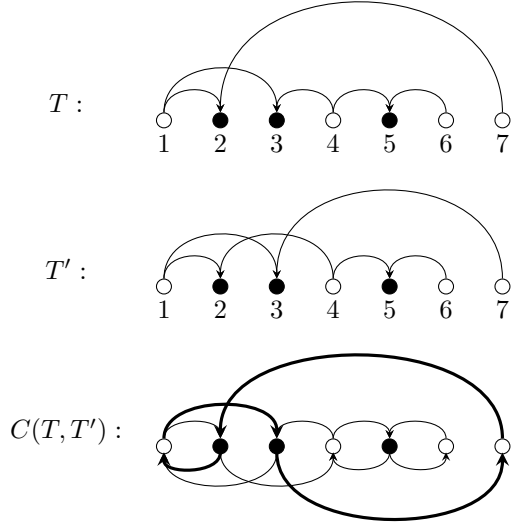
\begin{figure}
\[
\begin{array}{c c}
T: &
\begin{tikzpicture}[baseline=0,>=stealth,xscale=.75]
\foreach \x in {1,...,7}{
\coordinate (\x) at (\x,0);
}
\foreach \x in {1,4,6,7}{
 \draw (\x) node[below,fill=white,draw=black,circle,inner sep =2] {} node[below,yshift=-5pt] {$\x$};
}
\foreach \x in {2,3,5}{
 \draw (\x) node[below,fill=black,draw=black,circle,inner sep =2] {} node[below,yshift=-5pt] {$\x$};
}
\draw[->] (1) to [in=90,out=90] (2);
\draw[->] (1) to [in=90,out=90] (3);
\draw[->] (4) to [in=90,out=90] (3);
\draw[->] (4) to [in=90,out=90] (5);
\draw[->] (6) to [in=90,out=90] (5);
\draw[->] (7) to [in=90,out=90] (2);
\end{tikzpicture}\\
T': &
\begin{tikzpicture}[baseline=0,>=stealth,xscale=.75]
\foreach \x in {1,...,7}{
\coordinate (\x) at (\x,0);
}
\foreach \x in {1,4,6,7}{
 \draw (\x) node[below,fill=white,draw=black,circle,inner sep =2] {} node[below,yshift=-5pt] {$\x$};
}
\foreach \x in {2,3,5}{
 \draw (\x) node[below,fill=black,draw=black,circle,inner sep =2] {} node[below,yshift=-5pt] {$\x$};
}
\draw[->] (1) to [in=90,out=90] (2);
\draw[->] (1) to [in=90,out=90] (3);
\draw[->] (4) to [in=90,out=90] (2);
\draw[->] (4) to [in=90,out=90] (5);
\draw[->] (6) to [in=90,out=90] (5);
\draw[->] (7) to [in=90,out=90] (3);
\end{tikzpicture}\\
C(T, T'): &
\begin{tikzpicture}[baseline=0,>=stealth,scale=.75]
\foreach \x in {1,...,7}{
\coordinate (\x) at (\x,0);
}
\coordinate (1a) at (1,-.25);
\coordinate (2a) at (2,-.25);
\coordinate (3a) at (3,-.25);
\coordinate (4a) at (4,-.25);
\coordinate (5a) at (5,-.25);
\coordinate (6a) at (6,-.25);
\coordinate (7a) at (7,-.25);
\foreach \x in {1,4,6,7}{
 \draw (\x) node[below,fill=white,draw=black,circle,inner sep =2] {};
}
\foreach \x in {2,3,5}{
 \draw (\x) node[below,fill=black,draw=black,circle,inner sep =2] {};
}
\draw[->] (1) to [in=90,out=90]  (2);
\draw[very thick,->] (1) to [in=90,out=90]  (3);
\draw[->] (4) to [in=90,out=90]  (3);
\draw[->] (4) to [in=90,out=90]  (5);
\draw[->] (6) to [in=90,out=90]  (5);
\draw[very thick,->] (7) to [in=90,out=90]  (2);
\draw[very thick,->] (2a) to [in=-90,out=-90]  (1a);
\draw[->] (2a) to [in=-90,out=-90]  (4a);
\draw[very thick,->] (3a) to [in=-90,out=-90]  (7a);
\draw[->] (3a) to [in=-90,out=-90]  (1a);
\draw[->] (5a) to [in=-90,out=-90]  (4a);
\draw[->] (5a) to [in=-90,out=-90]  (6a);
\end{tikzpicture}
\end{array}
\]
\caption{The graphs $T$, $T'$, and $C(T, T')$. A directed $4$-cycle of $C(T, T')$ is highlighted in bold.}\label{fig:UTT}
\end{figure}

That is, suppose $f$ is a flow on the arcs of $T$ that induces a point $\xx$ and $g$ is another flow, this time on the arcs of $T'$, that also induces $\xx$. Then the flow $h: E(C(T, T')) \to \R_{\geq 0}$ defined by
\[
 h(i,j) = f(i,j) + g(j,i),
\]
with flows $f(i,j)=0$ if $(i,j)\notin T$ and $g(j,i)=0$ if $(j,i)\notin T'$, induces the point $\xx-\xx = 0$ and hence $h$ is a circulation. 

Conversely, any circulation on $C(T, T')$ decomposes into a flow on the arcs of $T$ that induces $\xx$ and a flow on the arcs of $T'$ that also induces $\xx$. With this observation we've already proved the first part of the following lemma.

\begin{lem}\label{lem:circulation}
Let $T$ and $T'$ be alternating trees on $[n]$. Every nonnegative circulation on $C(T, T')$ induces a point $\xx \in \C(T)\cap\C(T')$. Furthermore, $\C(T)\cap \C(T')$ is full-dimensional if and only if there is a strictly positive circulation on $C(T, T')$.
\end{lem}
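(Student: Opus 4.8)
The plan is to prove the two directions of the biconditional in Lemma~\ref{lem:circulation} separately, having already (in the text preceding the statement) established that every nonnegative circulation on $C(T,T')$ decomposes as $h(i,j) = f(i,j) + g(j,i)$ with $f$ a flow on $T$ and $g$ a flow on $T'$ inducing the same point $\xx$, so that $\xx \in \C(T)\cap\C(T')$. For the forward direction, suppose $\C(T)\cap\C(T')$ is full-dimensional, i.e.\ has dimension $n-1$ (the dimension of $V_\emptyset$, where both cones live). I would argue that a full-dimensional convex cone has nonempty interior, so we may pick $\xx$ in the relative interior of the intersection; then $\xx = \sum_{(i,j)\in E(T)} f(i,j)(e_i - e_j)$ for some flow $f$, and also $\xx = \sum_{(i,j)\in E(T')} g(i,j)(e_i-e_j)$ for some flow $g$. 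The key point is that because $\xx$ lies in the \emph{interior} of both $\C(T)$ and $\C(T')$, and each $\C(T)$, $\C(T')$ is a simplicial cone of dimension $n-1$ with $n-1$ extreme rays $e_i - e_j$ (by Lemma~\ref{lem:sim}, since $T$, $T'$ are trees hence acyclic with $n-1$ edges), the representing flows $f$ and $g$ are unique and strictly positive on every arc. Then $h(i,j) = f(i,j) + g(j,i)$ is a strictly positive circulation on $C(T,T')$.

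For the converse, suppose $h$ is a strictly positive circulation on $C(T,T')$. Decompose $h$ into $f$ on $E(T)$ and $g$ on $E(T')$ via $f(i,j) = h(i,j)$ when $(i,j)\in E(T)$ and $g(i,j) = h(j,i)$ when $(i,j)\in E(T')$ — note here one must be slightly careful about arcs that belong to both $C(T,T')$-contributions, but since $T$ and $T'$ are alternating and the two ``copies'' of an edge would be oppositely oriented, an honest accounting (as in the paragraph before the lemma) shows $h = f + \bar g$ where $\bar g$ reverses $g$; the decomposition is exactly the one already described. Both $f$ and $g$ are strictly positive on all their arcs, and they induce a common point $\xx$. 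I then want to show $\xx$ is in the interior of $\C(T)\cap\C(T')$. Since $T$ is a tree, the vectors $\{e_i - e_j : (i,j)\in E(T)\}$ are linearly independent and span $V_\emptyset$, so $\C(T)$ is a full-dimensional simplicial cone in $V_\emptyset$ whose relative interior consists precisely of the points with all coordinates in the $f$-representation strictly positive; hence $\xx \in \operatorname{relint}\C(T)$, and likewise $\xx\in\operatorname{relint}\C(T')$. Therefore a small ball around $\xx$ (within $V_\emptyset$) lies in both cones, so it lies in $\C(T)\cap\C(T')$, which is thus full-dimensional.

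I expect the main obstacle to be the bookkeeping in the decomposition step and, relatedly, making the ``interior $\iff$ strictly positive coordinates'' claim airtight. Concretely: one must verify that an edge of $C(T,T')$ carrying positive $h$-flow corresponds either to a $T$-arc with positive $f$ or a $T'$-reversed-arc with positive $g$, and conversely that strict positivity of $h$ on \emph{all} of $E(C(T,T'))$ forces $f$ to be strictly positive on \emph{all} of $E(T)$ and $g$ on all of $E(T')$ — this uses that the map $E(T)\sqcup E(T')\to E(C(T,T'))$ described in the definition of $C(T,T')$ is onto, so every $T$-arc and every $T'$-arc appears. (If $C(T,T')$ has a repeated edge, i.e.\ $(i,j)\in E(T)$ and $(j,i)\in E(T')$, then that single $C(T,T')$-edge receives $f(i,j)+g(j,i)$, and its positivity still forces at least one summand positive; but one should note such a coincidence can be handled, or excluded, without affecting the argument.) The cleanest route is to invoke directly the correspondence already set up in the text: circulations on $C(T,T')$ are in bijection with pairs (flow on $T$ inducing $\xx$, flow on $T'$ inducing $\xx$), and under this bijection strictly positive circulations correspond to pairs of flows that are strictly positive, hence to points $\xx$ in the relative interiors of both $\C(T)$ and $\C(T')$ — and a point common to two relative interiors of full-dimensional cones in $V_\emptyset$ witnesses full-dimensionality of the intersection, while conversely any full-dimensional intersection contains such a point.
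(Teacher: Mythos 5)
Your proof is correct and follows essentially the same route as the paper's: both directions hinge on the observation that a point lies in the interior of a root cone exactly when it is induced by a strictly positive flow, combined with the decomposition/combination $h = f + \bar g$ between circulations on $C(T,T')$ and pairs of flows on $T$ and $T'$ inducing a common point. Your extra care about shared arcs (an arc $(i,j)\in E(T)$ with $(j,i)\in E(T')$) is a fine point the paper dispatches by noting that an interior point of both cones forces $T$ and $T'$ to have the same sources and sinks, so the two cases in the definition of $h$ are disjoint; either way the argument goes through.
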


\begin{proof}
To prove the second assertion, we remark that a point $\xx$ is in the interior of a root cone if and only if it is induced by a strictly positive flow. Thus a point in the interior of both $\C(T)$ and $\C(T')$ has the form
\[
 \xx = \sum_{(i,j) \in T} f(i,j)(e_i-e_j) = \sum_{(k,l) \in T'} g(k,l)(e_k-e_l),
\]
for a strictly positive flow $f$ on $T$ and a strictly positive flow $g$ on $T'$. Notice this can happen if and only if at each vertex $i$, $i$ is a source in both $T$ and $T'$ or $i$ is a sink in both $T$ and $T'$. Otherwise, if, say, $i$ was a source in $T$ and a sink in $T'$, then positivity of flow would imply $x_i >0$ for $T$ and $x_i < 0 $ for $T'$.

We can now combine these flows to form a new flow $h$ on $C(T, T')$ via
\[
 h(i,j) = \begin{cases} f(i,j) & \mbox{if $(i,j) \in E(T)$} \\
  g(i,j) &\mbox{if $(j,i) \in E(T')$}.
  \end{cases}
\]
As these cases are disjoint, we know $h$ is well-defined on $C(T, T')$. Moreover this flow induces $\xx(C(T, T');h) = \xx(T;f)-\xx(T';g) = \xx-\xx =0$, so $h$ is clearly a strictly positive circulation.

The converse is straightforward. A strictly positive circulation on $C(T, T')$ yields two strictly positive flows: one on the arcs of $T$ and another on the arcs of $T'$ (after reversing the arcs), and by conservation of flow in $C(T, T')$, both these flows induce the same point in the interior of $\C(T) \cap \C(T')$.
\end{proof}

\begin{remark}[Long cycles in $C(T, T')$]
We remark that there is a simple sufficient condition for a nontrivial circulation given in \cite[Lemma 12.6]{Post09}. We review the idea here for convenience. If, as in the Figure \ref{fig:UTT}, there is a directed $k$-cycle in $C(T, T')$ with $k\geq 4$ (and $k$ necessarily even and all edges distinct), then the arcs of the cycle above the nodes give rise to a matching $M$ on $T$, while the arcs of the cycle below the nodes give rise to a matching $M'$ on $T'$. By construction, the nodes in $M$ are the same as the nodes in $M'$, and so the point
\[
 \mathbf{x} = \sum_{(i,j) \in E(M)} (e_i-e_j) = \sum_{(j,i)\in E(M')} (e_i-e_j)
\]
is an element of $\C(T)\cap \C(T')$. The smallest face of $\C(T)$ in which $\mathbf{x}$ lives is $\C(M)$, while the smallest face of $\C(T')$ in which $\mathbf{x}$ lives is $\C(M')$. But since $M\neq M'$, we know $\C(M)\neq \C(M')$ and the intersection $\C(T)\cap \C(T')$ is not a common face. Hence,  \cite[Lemma 12.6]{Post09} concludes that alternating trees on $[n]$, such that $C(T, T')$ has no cycles of length 4 or greater, are those whose root simplices can both appear in a central triangulation of $\P(\kg_n)$.
\end{remark}

\section{The resonance arrangement}\label{sec:resonance}

In this section we present the resonance arrangement and show how certain sets of trees can be used to label chambers. First, we must properly define the resonance arrangement.

For any subset $S\subseteq [n]$, let $u_S$ denote the $0/1$ vector of length $n$ in which the elements of $S$ denote the entries that are $1$. For example, if $n=8$,
\[
 u_{\{1,3,4,6\}} = (1,0,1,1,0,1,0,0).
\]
If $S\neq \emptyset$, let $U_S = \{ \xx \in \R^{n} :\langle \xx, u_S \rangle = 0\}$ denote the hyperplane normal to $u_S$. The \emph{resonance arrangement} $\Res_{n-1}$ is the rank $n-1$ hyperplane arrangement given by the intersection of the hyperplanes $U_S$, $S\neq [n]$, with the hyperplane $V_{\emptyset}=U_{[n]} = \{\xx \in \R^n : \langle \xx, \mathbf{1} \rangle = \sum x_i =0\}$. That is, the ambient vector space for $\Res_{n-1}$ is $V_{\emptyset}$, and the hyperplanes in $\Res_{n-1}$ are given by $U'_S = U_S\cap V_{\emptyset}$. The number of chambers in $\Res_{n-1}$ is $R_{n-1}$.

For example, in Figure \ref{fig:res2} we see the resonance arrangement of rank two. Here we obtain three distinct hyperplanes (lines):
\begin{align*}
U'_1 &= \{(x,y,z)\in \R^n : x=0=y+z\},\\ 
U'_2 &= \{(x,y,z)\in \R^n : y=0=x+z\},\\
U'_{12} &= \{(x,y,z)\in \R^n : x+y=0=z\}.,
\end{align*}
corresponding to intersecting each of the following hyperplanes (planes in $\R^3$) with $V_{\emptyset}$:
\begin{align*}
U_1 &= \{(x,y,z)\in \R^n : x=0\},\\ 
U_2 &= \{(x,y,z)\in \R^n : y=0\},\\
U_{12} &= \{(x,y,z)\in \R^n : x+y=0\}.
\end{align*}
These hyperplanes have normal vectors $u_1 = (1,0,0)$, $u_2 = (0,1,0)$, and $u_{12} = (1,1,0)$.

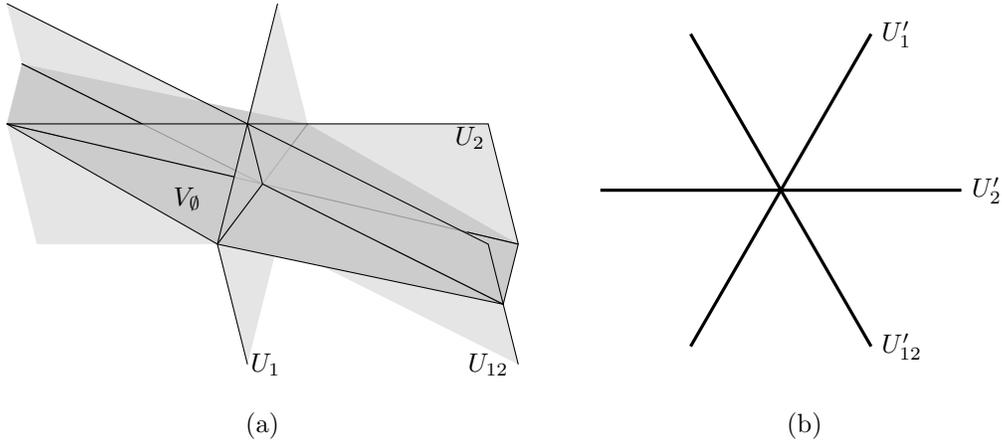
\begin{figure}
\begin{center}
\begin{tabular*}{\columnwidth}{@{\extracolsep{\fill}} cccc}
&
\begin{tikzpicture}[scale=.8,baseline=0]
\draw[draw=none,fill=white!80!black,opacity=1] (-4,1)--(-3.75,2)--(1,1)--(4.5,-1)--(4.25,-2)--(-.5,-1)--(-4,1);
\draw (-4,1)--(4.5,-1);
\draw (-3.75,2)--(4.25,-2);
\draw (1,1)--(-.5,-1);
\draw[draw=none, fill=white!80!black,opacity=.5] (-4,3)--(4,-1)--(4.5,-3)--(-3.5,1)--(-4,3);
\draw[draw=none, fill=white!80!black,opacity=.5] (-4,1)--(4,1)--(4.5,-1)--(-3.5,-1)--(-4,1);
\draw[draw=none, fill=white!80!black,opacity=.5] (.5,3)--(1,1)--(0,-3)--(-.5,-1)--(.5,3);
\draw (-4,3)--(4,-1)--(4.5,-3);
\draw (-4,1)--(4,1)--(4.5,-1);
\draw (0,-3)--(-.5,-1)--(.5,3);
\draw (0,1)--(.25,0);
\draw (-.5,-1)--(.25,0)--(4.25,-2);
\draw (-4,1)--(-.5,-1)--(4.25,-2)--(4.5,-1);
\draw (-4,1)--(-.22,.12);
\draw (4.5,-1)--(3.65,-.8);
\draw (-3.75,2)--(-1.75,1);
\draw (0,-3) node[right, fill=none,inner sep=1] {$U_1$};
\draw (4,-3) node[fill=none,inner sep=1] {$U_{12}$};
\draw (4,1) node[below left,fill=none,inner sep=1] {$U_2$};
\draw (-1,-.5) node[above,fill=none,inner sep=1] {$V_{\emptyset}$};
\end{tikzpicture}
&
\begin{tikzpicture}[scale=1.2, cm={1,0,.5,.8660254,(0,0)}, >=stealth,baseline=3.2cm]
      \draw[very thick] (0,3)--(4,3) node[right] {$U'_{2}$};
      \draw[very thick] (2,1)--(2,5) node[right] {$U'_{1}$};
      \draw[very thick] (0,5)--(4,1) node[right] {$U'_{12}$};
\end{tikzpicture}
&
\\
\\
& (a) & (b) &
\end{tabular*}
\end{center}
\caption{The rank two resonance arrangement, $\Res_2$, drawn (a) in $\R^3$, and (b) in the plane $V_{\emptyset}= \{ (x,y,z) : x+y+z=0\}$.}\label{fig:res2}
\end{figure}

In Figure \ref{fig:rank3res} we see an image of the rank 3 resonance arrangement.

\subsection{Sign vectors for the resonance arrangement}

It turns out that while there are $2^{n}$ vectors $u_S$, more than half of them are not important for characterizing chambers. We can immediately discard $u_{\emptyset}$, since it is the zero vector, and $u_{[n]}= (1,1,\ldots,1)$ is normal to all of $V_{\emptyset}$. Of the remaining $2^{n}-2$ hyperplanes, we note that $U_S'=U_{[n]-S}'$ for any $S\subseteq [n],$ so we can discard proper, nonempty subsets with $n\in S.$ This yields $(2^n-2)/2 = 2^{n-1}-1$ entries that determine a sign vector for a point in $\Res_{n-1}$, indexed by nonempty subsets of $[n-1]$.

\begin{defn}[Resonance sign vectors]\label{def:resvector}
Given a point $\xx \in V_{\emptyset} = \{ \xx \in \R^n : \sum x_i =0\}$, the \textbf{resonance sign vector} is given by
\[
 \sigma(\xx) = (\sigma_S(\xx))_{\emptyset \subsetneq S \subseteq [n-1]},
\]
where 
\[
\sigma_S(\xx) = \begin{cases} + & \mbox{if } \langle \xx, u_S \rangle > 0,\\
 - & \mbox{if } \langle \xx, u_S \rangle < 0,\\
 0 & \mbox{if } \langle \xx, u_S \rangle = 0.
 \end{cases}
\]
\end{defn}

For example, the point $\xx = (1,-2,0,1) \in V_{\emptyset}$ has $\sigma(\xx)$ given by
\[
 (\sigma_1, \sigma_2, \sigma_3, \sigma_{12}, \sigma_{13}, \sigma_{23}, \sigma_{123}) = (+,-,0,-,+,-,-).
\]

\begin{remark}[Subset sums]
The inner product $\langle \xx, u_S \rangle = \sum_{s \in S} x_s$ records the sign of the sum of the entries indexed by $S$. Determining whether a point lies in the interior of a chamber of the resonance arrangement then amounts to the NP-complete problem \texttt{SubsetSum}, which asks ``given a multiset of integers, does it have a subset with sum 0?'' However, the complexity of this problem in general is not a hindrance for computing sign vectors in relatively small examples. A similar observation was made in Appendix A of \cite{Lewis17}.
\end{remark}

\subsection{Sign vectors for trees}

Not all points in the interior of a root cone---even the root cone of a tree---have the same sign vector. For example, both the points $(2,2,-3,-1)$ and $(1,3,-2,-2)$ lie in the root cone for the same tree, induced by the flows shown in Figure \ref{fig:twosigns}. However, $\sigma_{23}((2,2,-3,-1)) = -$ while $\sigma_{23}((1,3,-2,-2)) = +$ and so these two points are separated by the hyperplane $U_{23}'$. Incidentally, this means we can construct a third point in this root cone, on the line between these two, with $\sigma_{23} = 0$. While we have an entry of the sign vectors that disagree, one can check that all other entries are the same. 

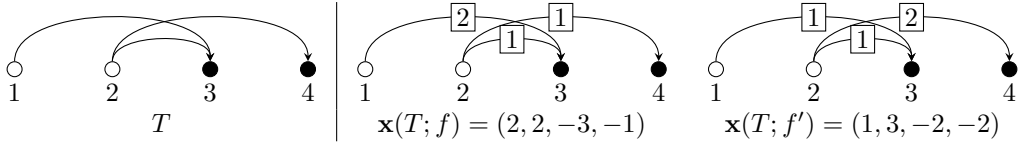
\begin{figure}
\[
\begin{array}{c | c c}
\begin{tikzpicture}[baseline=0,>=stealth,xscale=1.3]
\foreach \x in {1,...,4}{
\coordinate (\x) at (\x,0);
}
\foreach \x in {1,2}{
 \draw (\x) node[below,fill=white,draw=black,circle,inner sep =2] {} node[below,yshift=-5pt] {$\x$};
}
\foreach \x in {3,4}{
 \draw (\x) node[below,fill=black,draw=black,circle,inner sep =2] {} node[below,yshift=-5pt] {$\x$};
}
\draw[->] (1) to [in=90,out=90]  (3);
\draw[->] (2) to [in=90,out=90]  (3);
\draw[->] (2) to [in=90,out=90]  (4);
\end{tikzpicture}
&
\begin{tikzpicture}[baseline=0,>=stealth,xscale=1.3]
\foreach \x in {1,...,4}{
\coordinate (\x) at (\x,0);
}
\foreach \x in {1,2}{
 \draw (\x) node[below,fill=white,draw=black,circle,inner sep =2] {} node[below,yshift=-5pt] {$\x$};
}
\foreach \x in {3,4}{
 \draw (\x) node[below,fill=black,draw=black,circle,inner sep =2] {} node[below,yshift=-5pt] {$\x$};
}
\draw[->] (1) to [in=90,out=90] node[midway,inner sep=2,fill=white,draw=black] {$2$} (3);
\draw[->] (2) to [in=90,out=90] node[midway,inner sep=2,fill=white,draw=black] {$1$} (3);
\draw[->] (2) to [in=90,out=90] node[midway,inner sep=2,fill=white,draw=black] {$1$} (4);
\end{tikzpicture}
&
 \begin{tikzpicture}[baseline=0,>=stealth,xscale=1.3]
\foreach \x in {1,...,4}{
\coordinate (\x) at (\x,0);
}
\foreach \x in {1,2}{
 \draw (\x) node[below,fill=white,draw=black,circle,inner sep =2] {} node[below,yshift=-5pt] {$\x$};
}
\foreach \x in {3,4}{
 \draw (\x) node[below,fill=black,draw=black,circle,inner sep =2] {} node[below,yshift=-5pt] {$\x$};
}
\draw[->] (1) to [in=90,out=90] node[midway,inner sep=2,fill=white,draw=black] {$1$} (3);
\draw[->] (2) to [in=90,out=90] node[midway,inner sep=2,fill=white,draw=black] {$1$} (3);
\draw[->] (2) to [in=90,out=90] node[midway,inner sep=2,fill=white,draw=black] {$2$} (4);
\end{tikzpicture}
\\
T & \xx(T;f) = (2,2,-3,-1) & \xx(T;f')=(1,3,-2,-2)
\end{array}
\]
\caption{Two points in the same root cone with different sign vectors.}\label{fig:twosigns}
\end{figure}

The first result in this section is a lemma that tells us something about the extent to which the sign vector of a point in a root cone is determined by the graph itself, rather than a particular flow on the graph. 

For any directed graph $G$ on $[n]$, let $G_I$ denote the subgraph restricted to the vertices in $I \subset [n]$. We let $\ind(G_I)$ denote the set of arcs entering $G_I$, and we say $|\ind(G_I)|$ is the \emph{indegree of subset $I$}. Similarly we let $\outd(G_I)$ denote the set of arcs leaving $G_I$, and we call $|\outd(G_I)|$ the \emph{outdegree of subset $I$}. To be clear,
\[
\ind(G_I) = \{ (j,i) : i \in I, j \notin I\},
\]
and
\[
\outd(G_I) = \{(i,j) : i \in I, j \notin I\}.
\]
The following lemma shows that sometimes the indegree and outdegree are sufficient to determine entries in the sign vector of a point in $\C(G)$.

\begin{lem}\label{lem:signflow}
 Let $G$ be an alternating acyclic graph on $[n]$. Let $I$ be any nonempty subset of $[n-1]$, i.e., $\emptyset \neq I \subseteq [n-1]$. Then, for any point $\xx \in \C(G)$: 
 \begin{itemize}
  \item if $|\ind(G_I)|=0$, then $\sigma_I(\xx) \in \{+,0\}$, or
  \item if $|\outd(G_I)|=0$, then $\sigma_I(\xx) \in \{-,0\}$.
 \end{itemize}
In particular if $G_I$ is not connected to $G_{[n]-I}$ then $\sigma_I(\xx) = 0$. 
\end{lem}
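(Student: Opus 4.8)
The plan is to compute the inner product $\langle \xx, u_I\rangle = \sum_{i\in I} x_i$ directly in terms of a flow $f$ on $G$ realizing $\xx$, and to show that the flow contributions either all have the same sign or cancel in a controlled way. Recall from Definition~\ref{def:rootcone} that any $\xx\in\C(G)$ has the form $x_i = \sum_{(i,j)\in E(G)} f(i,j) - \sum_{(k,i)\in E(G)} f(k,i)$ for some nonnegative flow $f$. Summing over $i\in I$, every arc with both endpoints in $I$ contributes $+f$ once (from its tail) and $-f$ once (from its head), so it cancels; every arc in $\outd(G_I)$ contributes $+f(i,j)$ (only its tail is counted), and every arc in $\ind(G_I)$ contributes $-f(j,i)$ (only its head is counted). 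Hence the clean identity
\[
 \langle \xx, u_I\rangle = \sum_{(i,j)\in\outd(G_I)} f(i,j) \;-\!\!\sum_{(j,i)\in\ind(G_I)} f(j,i).
\]

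With this identity the lemma is immediate. If $|\ind(G_I)|=0$, the second sum is empty and $\langle\xx,u_I\rangle$ is a sum of nonnegative terms, so it is $\geq 0$, giving $\sigma_I(\xx)\in\{+,0\}$; symmetrically, if $|\outd(G_I)|=0$ the first sum is empty and $\langle\xx,u_I\rangle\leq 0$, giving $\sigma_I(\xx)\in\{-,0\}$. If $G_I$ is not connected to $G_{[n]-I}$, then both $\ind(G_I)$ and $\outd(G_I)$ are empty, so $\langle\xx,u_I\rangle = 0$ and $\sigma_I(\xx)=0$. Note that acyclicity and the alternating hypothesis are not actually needed for the identity or its consequences; they are presumably kept in the statement because this lemma is only applied to alternating trees, and flagging the hypotheses avoids confusion later. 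One small point worth a sentence: the restriction $I\subseteq[n-1]$ (rather than $I\subseteq[n]$) is just so that $u_I$ is one of the normal vectors actually recorded in the resonance sign vector of Definition~\ref{def:resvector}; the computation above is valid for any nonempty $I\subsetneq[n]$.

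There is essentially no obstacle here — the only thing to be careful about is the bookkeeping in the telescoping/cancellation step, i.e., verifying that an arc internal to $I$ really does cancel and that boundary arcs are counted with the correct sign and multiplicity one. I would present the identity as a displayed equation, justify the cancellation in one or two sentences by the case analysis on where an arc's endpoints lie (both in $I$, both outside, or straddling), and then read off the three bulleted conclusions. The whole argument is three or four lines once the identity is in place.
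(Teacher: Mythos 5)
Your proposal is correct and follows essentially the same argument as the paper: both compute $\langle \xx, u_I\rangle = \sum_{i\in I} x_i$ from the defining flow, observe that arcs internal to $I$ cancel while boundary arcs contribute with a definite sign, and read off the conclusions (the paper just reduces to the case $\ind(G_I)=\emptyset$ by symmetry rather than writing the two-sum identity). Your side remarks --- that acyclicity and the alternating hypothesis are not needed for this computation, and that $I\subseteq[n-1]$ only matters for matching Definition~\ref{def:resvector} --- are also accurate.
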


\begin{proof}
The two cases are identical up to reversal of the arcs of $G$, so without loss of generality, suppose $\ind(G_I)=\emptyset$. We will show $\sigma_I(\xx)=+$ or $0$. 
 
We can partition the arcs of $G$ into three sets $E_I = \{ (i,j) : i,j \in I\}$, $E_{[n]-I} = \{ (k,l) : k, l \in [n]-I\}$, and $\outd(G_I) = \{ (i,l) : i \in I, l \in [n]-I\}$.
 
Let $f$ be a nonnegative flow on $G$, and let $\xx=\xx(G;f)$ be the point induced by this flow. We have 
\[
\langle \xx, u_I \rangle = \sum_{i\in I} x_i = \sum_{(i,k)\in \outd(G_I)} f(i,k),
\] 
since for every arc $(i,j)$ with both $i,j \in I$, we have $f(i,j)$ contributing to $x_i$ and $-f(i,j)$ contributing to $x_j$. Since $f$ is a nonnegative flow, we have $\sigma_I(\xx) = +$ if any of the flows $f(i,k) \in \outd(G_I)$ are positive and $\sigma_I(\xx)=0$ if all the flows are zero (or $\outd(G_I)$ is empty). 
\end{proof}

We use the idea of Lemma \ref{lem:signflow} to define a coarse sort of sign vector for root cones themselves, i.e., for alternating trees.

\begin{defn}[Tree sign vector]\label{def:treevector}
Let $T$ be an alternating tree on $[n]$. The \textbf{tree sign vector} is given by
\[
\sigma(T) = (\sigma_S(T))_{\emptyset \subsetneq S \subseteq [n-1]},
\]
where 
\[
\sigma_S(T) = \begin{cases} + & \mbox{if } |\ind(T_S)| = 0,\\
 - &  \mbox{if } |\outd(T_S)| = 0,\\
 ? & \mbox{otherwise}.
 \end{cases}
\]
\end{defn}

In particular, we have $\sigma_I(T) =+$ for any subset of sources $I$ and $\sigma_J(T) =-$ for any subset of sinks $J$. (The ``interesting'' entries are those $\sigma_S(T)$ for which $S$ contains both sources and sinks.) For example, the tree in Figure \ref{fig:twosigns} has $\sigma(T)$ given by
\[
 (\sigma_1, \sigma_2, \sigma_3, \sigma_{12}, \sigma_{13}, \sigma_{23}, \sigma_{123}) = (+,+,-,+,-,?,+).
\]

It will be good to know when two trees have nearly the same sign vectors.

\begin{defn}[Sign compatible trees]\label{def:signcompatible}
Let $T$ and $T'$ be two alternating trees on $[n]$. We say the trees are \textbf{sign incompatible} if they disagree on a known entry in their sign vectors, i.e., if for some $I$, $\{ \sigma_I(T), \sigma_I(T')\} = \{ -,+\}$. Otherwise, we say $T$ and $T'$ are \textbf{sign compatible}.
\end{defn}

In other words, two trees are sign compatible if for all $I$, either $\sigma_I(T) = \sigma_I(T')$, or if not equal, one of the entries is a ``$?$''.

While it is rather obvious that two root cones with full dimensional intersection must be sign compatible, it turns out that the converse is true as well. To prove this result, we invoke \emph{Hoffman's circulation theorem}, as stated here.

\begin{thm}[Hoffman's circulation theorem \cite{SchrijverA, Hof03}]\label{thm:Hoffman}  Let $G=(V, E)$ be a directed graph, and suppose there exist flows $l$ and $u$ on $G$, with  $0\leq l(i,j) \leq u(i,j)$ for each $(i,j) \in E$. Then there exists a circulation $f$ on $G$ with $l(i,j)\leq f(i,j) \leq u(i,j)$ for all $(i,j) \in E$ if and only if 
\[
\sum_{(i,j) \in \ind(G_I)} l(i,j) \leq \sum_{(k,l)\in \outd(G_I)}u(i,j)
\]
for all $\emptyset \neq I\subsetneq V$. 
\end{thm}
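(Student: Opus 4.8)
The plan is to prove the two implications separately. Necessity (``only if'') is a one-line summation argument, while sufficiency (``if'') is the substantive direction, which I would obtain by reducing to the max-flow/min-cut theorem. An alternative would be to invoke total unimodularity of the incidence matrix of $G$ together with linear programming duality (Farkas' lemma), which produces the same family of cut inequalities; the bookkeeping is comparable either way.

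For necessity, suppose a circulation $f$ with $l\le f\le u$ exists, fix a set $I$ with $\emptyset\neq I\subsetneq V$, and sum the conservation equation (inflow $=$ outflow) over all vertices $v\in I$. Every arc with both endpoints in $I$ contributes to both sides and cancels, leaving $\sum_{(j,i)\in\ind(G_I)}f(j,i)=\sum_{(i,k)\in\outd(G_I)}f(i,k)$. Bounding below by $l$ on the arcs of $\ind(G_I)$ and above by $u$ on the arcs of $\outd(G_I)$ then gives $\sum_{\ind(G_I)}l\le\sum_{\ind(G_I)}f=\sum_{\outd(G_I)}f\le\sum_{\outd(G_I)}u$, which is precisely the stated inequality.

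For sufficiency, assume the cut inequality holds for every $\emptyset\neq I\subsetneq V$. First I would remove the lower bounds: writing $f=l+g$, requiring $f$ to be a circulation with $l\le f\le u$ is equivalent to requiring $0\le g\le c:=u-l$ together with the condition that, at each vertex $v$, the net outflow of $g$ equals the imbalance $b(v):=\sum_{(k,v)\in E}l(k,v)-\sum_{(v,j)\in E}l(v,j)$; note $\sum_{v\in V}b(v)=0$. I would then build an auxiliary network $G'$ by adjoining a super-source $s$ with an arc $s\to v$ of capacity $b(v)$ for each $v$ with $b(v)>0$, a super-sink $t$ with an arc $v\to t$ of capacity $-b(v)$ for each $v$ with $b(v)<0$, and keeping each original arc with capacity $c$. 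A routine check shows a feasible $g$ exists if and only if $G'$ admits an $s$--$t$ flow saturating every arc out of $s$, i.e., of value $D:=\sum_{b(v)>0}b(v)$. By max-flow/min-cut this holds iff every $s$--$t$ cut of $G'$ has capacity at least $D$. The $s$--$t$ cuts correspond to subsets $I\subseteq V$ (source side $\{s\}\cup I$); computing the cut capacity — the $s$-arcs into $V\setminus I$, plus $\sum_{\outd(G_I)}c$, plus the $t$-arcs out of $I$ — and using the identity $\sum_{v\in I}b(v)=\sum_{\ind(G_I)}l-\sum_{\outd(G_I)}l$ shows that ``capacity $\ge D$'' is equivalent to $\sum_{\ind(G_I)}l\le\sum_{\outd(G_I)}u$ when $\emptyset\neq I\subsetneq V$, and is automatic (from $\sum_v b(v)=0$) when $I=\emptyset$ or $I=V$. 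Hence the hypothesis forces the saturating flow to exist, and unwinding the reduction yields the desired circulation.

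The main obstacle is the bookkeeping in the sufficiency direction: setting up the auxiliary network with the correct $s$- and $t$-arc capacities, carrying out the $l\mapsto 0$ substitution cleanly, and verifying that the min-cut condition over subsets of $V$ matches Hoffman's inequalities on the nose — in particular that $I$ and its complement each contribute one of the two orientations of the inequality, so that ranging over all nonempty proper $I$ is exactly what is needed. Everything else is routine.
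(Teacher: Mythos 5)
Your proof is correct: necessity by summing the conservation equations over $I$, and sufficiency via the substitution $f=l+g$ followed by the super-source/super-sink reduction to max-flow/min-cut, with the cut-capacity computation matching Hoffman's inequalities exactly (including the observation that $I=\emptyset$ and $I=V$ are automatic). Note, though, that the paper gives no proof of Theorem \ref{thm:Hoffman} at all—it is quoted as a known result with citations to Schrijver and Hoffman—so there is nothing internal to compare against; your argument is essentially the standard proof found in those references.
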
 

We will also have use for the following lemma.

\begin{lem}\label{lem:outU}
Suppose $T$ and $T'$ are alternating trees on $[n]$, and let $C = C(T, T')$. If $T$ and $T'$ are sign compatible, then for all $\emptyset \neq I \subsetneq [n]$, we have $\ind(C_I) \neq \emptyset$ and $\outd(C_I) \neq \emptyset$.
\end{lem}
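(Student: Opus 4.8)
The plan is to prove the contrapositive: if some $\emptyset \neq I \subsetneq [n]$ has $\ind(C_I) = \emptyset$ or $\outd(C_I) = \emptyset$, then $T$ and $T'$ are sign incompatible. By symmetry (reversing all arcs of both $T$ and $T'$ simultaneously sends $C(T,T')$ to its reversal and preserves sign compatibility), it suffices to treat the case $\ind(C_I) = \emptyset$. Recall that $C = C(T,T')$ consists of the arcs of $T$ together with the \emph{reversed} arcs of $T'$; an arc $(j,i)$ with $j \notin I$, $i \in I$ lies in $C$ iff $(j,i) \in E(T)$ or $(i,j) \in E(T')$. So $\ind(C_I) = \emptyset$ says precisely: no arc of $T$ enters $I$ (i.e.\ $\ind(T_I) = \emptyset$, so $\sigma_I(T) = +$), \emph{and} no arc of $T'$ leaves $I$ (i.e.\ $\outd(T'_I) = \emptyset$, so $\sigma_I(T') = -$). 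That already gives $\{\sigma_I(T), \sigma_I(T')\} = \{+,-\}$, hence $T$ and $T'$ are sign incompatible.

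The only remaining subtlety is a boundary-case check: the tree sign vector is only defined for $S \subseteq [n-1]$, whereas here $I$ ranges over all proper nonempty subsets of $[n]$, so $I$ might contain $n$. If $n \in I$, replace $I$ by its complement $I' = [n] - I$, which is a nonempty proper subset of $[n-1]$. One checks $\ind(C_{I'}) = \outd(C_I)$ and $\outd(C_{I'}) = \ind(C_I)$, so $\ind(C_I) = \emptyset$ translates to $\outd(C_{I'}) = \emptyset$; unwinding the definition of $C$ as above, this says no arc of $T$ leaves $I'$ and no arc of $T'$ enters $I'$, i.e.\ $\sigma_{I'}(T) = -$ and $\sigma_{I'}(T') = +$, again a sign incompatibility. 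The case $\outd(C_I) = \emptyset$ is handled the same way by the arc-reversal symmetry.

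I do not expect a serious obstacle here; this lemma is essentially a bookkeeping translation between the definition of $C(T,T')$, the definitions of $\ind$/$\outd$ of a subset, and the definition of the tree sign vector. The one place to be careful — and the "main obstacle" in the sense of being the only non-automatic point — is making sure the indexing conventions line up: that "$\ind(C_I) = \emptyset$" genuinely forces \emph{both} a constraint on $T$ and a constraint on $T'$ (it does, because $C$ mixes arcs of $T$ with reversed arcs of $T'$), and that the restriction $S \subseteq [n-1]$ in Definition \ref{def:treevector} is harmless because we may always pass to the complementary subset. After that, the statement follows immediately. (Hoffman's circulation theorem, Theorem \ref{thm:Hoffman}, is presumably invoked in the next lemma rather than here, so I would not need it for this one.)
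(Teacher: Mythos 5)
Your proof is correct and takes essentially the same approach as the paper: both argue the contrapositive, translating the emptiness of $\ind(C_I)$ (resp.\ $\outd(C_I)$) via the definition of $C(T,T')$ into $\ind(T_I)=\emptyset$ and $\outd(T'_I)=\emptyset$ (resp.\ the reverse), hence $\{\sigma_I(T),\sigma_I(T')\}=\{+,-\}$ and sign incompatibility. Your extra step handling $n\in I$ by passing to the complement $I'=[n]-I$ is correct careful bookkeeping that the paper leaves implicit.
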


\begin{proof}
We will prove the contrapositive. Suppose $\outd(C_I) = \emptyset$ for some $I$. (The argument for $\ind(C_I)=\emptyset$ is similar.) Then by construction of $C=C(T, T')$, we would have $\outd(T_I) = \emptyset$ and $\ind(T'_I) = \emptyset$. By definition of the tree sign vector, this means $\sigma_I(T)=-$ and $\sigma_I(T') = +$ and $T$ and $T'$ are sign incompatible.
\end{proof}

The following result uses Hoffman's circulation theorem to show that sign compatibility for a pair of trees is equivalent to full-dimensional intersection of root cones.

\begin{cor} \label{cor:signcompatible} Let $T$ and $T'$ be two alternating trees on $[n]$. Then the intersection of $\C(T)$ and $\C(T')$ is full-dimensional, i.e., $\dim(\C(T)\cap \C(T')) = n-1$, if and only if $T$ and $T'$ are sign compatible.
\end{cor}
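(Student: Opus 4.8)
The plan is to prove the two directions separately, with the forward direction being essentially immediate and the reverse direction carrying all the weight via Hoffman's circulation theorem.

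For the forward direction, suppose $\C(T) \cap \C(T')$ is full-dimensional. Then by Lemma \ref{lem:circulation} there is a strictly positive circulation on $C(T,T')$. As noted in the proof of Lemma \ref{lem:circulation}, this forces each vertex $i$ to be a source in both $T$ and $T'$ or a sink in both $T$ and $T'$ (otherwise a strictly positive flow would make $x_i$ both positive and negative). Consequently, for every $\emptyset \neq I \subseteq [n-1]$, the subsets of sources and sinks coincide for $T$ and $T'$, so any ``known'' sign-vector entry $\sigma_I(T) \in \{+,-\}$ is determined by whether $\ind(T_I)$ or $\outd(T_I)$ is empty; but since $T$ and $T'$ have the same source/sink partition and the same edge-direction pattern relative to $I$ whenever one of these degrees vanishes, we get $\sigma_I(T) = \sigma_I(T')$ on every known entry. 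Hence $T$ and $T'$ are sign compatible. (Alternatively, and more cleanly: a point in the common interior has a single resonance sign vector $\sigma(\xx)$, and Lemma \ref{lem:signflow} forces $\sigma_I(\xx)$ to agree with both $\sigma_I(T)$ and $\sigma_I(T')$ whenever either is a known entry, so the known entries cannot disagree.)

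For the reverse direction, suppose $T$ and $T'$ are sign compatible; I want a strictly positive circulation on $C = C(T,T')$, after which Lemma \ref{lem:circulation} finishes the proof. The idea is to apply Hoffman's circulation theorem (Theorem \ref{thm:Hoffman}) with lower bounds $l(i,j) = 1$ on every arc and upper bounds $u(i,j) = \infty$ (or, to stay within the finite statement, a sufficiently large constant $N$ on every arc and then observe the resulting circulation can be scaled; more carefully, one runs Hoffman with $l \equiv 1$ and $u \equiv M$ for $M$ large and checks the hypothesis, which becomes easier as $M$ grows). Hoffman's condition then requires, for every $\emptyset \neq I \subsetneq [n]$, that $|\ind(C_I)| \le M\,|\outd(C_I)|$, i.e.\ essentially that $\outd(C_I) \neq \emptyset$ whenever $\ind(C_I) \neq \emptyset$. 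But Lemma \ref{lem:outU} gives us exactly that $\ind(C_I) \neq \emptyset$ \emph{and} $\outd(C_I) \neq \emptyset$ for all such $I$, so the Hoffman inequality holds (for $M$ at least the maximum possible indegree, e.g.\ $M = 2n$). This produces a circulation $f$ with $1 \le f(i,j) \le M$ on every arc — in particular strictly positive — and Lemma \ref{lem:circulation} yields a point in the interior of $\C(T) \cap \C(T')$, so the intersection is full-dimensional.

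The main obstacle I anticipate is bookkeeping around the finite-versus-infinite upper bound in Hoffman's theorem as stated, and more substantively, making sure that the combinatorial translation between $C(T,T')$ and the pair $(T,T')$ is airtight: one must check that $C(T,T')$ with its doubled arc set (arcs of $T$ drawn forward, arcs of $T'$ drawn reversed) has no ambiguity when $T$ and $T'$ share an edge in the same or opposite orientation, so that ``arc set of $C$'' and the degree sets $\ind(C_I)$, $\outd(C_I)$ are well-defined and genuinely decompose as claimed in Lemma \ref{lem:outU}. Once that is in place, the argument is a clean application of Hoffman plus Lemma \ref{lem:outU} plus Lemma \ref{lem:circulation}. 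I would also remark, to connect with the dimension claim, that $\C(T)$ and $\C(T')$ are full-dimensional cones in $V_{\emptyset} \cong \R^{n-1}$ by Lemma \ref{lem:sim} (each tree has $n-1$ edges), so ``full-dimensional intersection'' is precisely $\dim(\C(T) \cap \C(T')) = n-1$, matching the statement.
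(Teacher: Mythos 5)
Your proposal is correct and follows essentially the same route as the paper: the forward direction is the easy observation that a common interior point forbids disagreeing known entries, and the reverse direction applies Hoffman's circulation theorem with lower bounds $1$ and a large constant upper bound (the paper uses $u\equiv 2n$), verified via Lemma \ref{lem:outU} and concluded via Lemma \ref{lem:circulation}. One small note: keep your parenthetical version of the forward direction (via Lemma \ref{lem:signflow} and the fact that a full-dimensional intersection cannot lie in a hyperplane $U_I'$), since the first version's inference from ``same source/sink partition'' to agreement of known entries is not by itself valid.
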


\begin{proof}
If the intersection of $\C(T)$ and $\C(T')$ is full-dimensional, then the sign-compatibility of $T$ and $T'$ is obvious, since there exists a point in the interior of both cones.

Now suppose $T$ and $T'$ are sign compatible. Then we claim Hoffman's circulation theorem is satisfied by letting $l(i,j) = 1$ and $u(i,j) = 2n$ for all arcs $(i,j)$ in $C=C(T, T')$. Indeed, by Lemma \ref{lem:outU}, we know $\outd(C_I) \neq \emptyset$ for any proper nonempty subset $I$. Thus,
\[
\sum_{(i,j) \in \ind(C_I)} l(i,j) = |\ind(C_I)| \leq |E(C)| = 2(n-1) < 2n \leq 2n|\outd(C_I)| = \sum_{(i,j) \in \outd(C_I)} u(i,j).
\]
Hence, there exists a circulation $f: E(C) \to \R$ where $f(i,j)\geq l(i,j)=1$ on every arc $(i,j)\in E(C).$  That is, $f$ is strictly positive.  Lemma \ref{lem:circulation} now implies that  $\C(T)\cap\C(T')$ is full dimensional.  
\end{proof}

\subsection{Resonance chambers as intersections of cones}

In this section we justify the fundamental connection between root cones and the resonance arrangement. Given a collection of cones with full-dimensional intersection, we call the interior of their intersection a \emph{refined chamber}. Refined chambers are ordered by reverse inclusion, and a \emph{maximally refined chamber} is a refined chamber that does not contain any other refined chambers.

\begin{prop}\label{lem:chambersandroots}
 The chambers of the resonance arrangement are the maximally refined chambers obtained by intersections of root cones.
\end{prop}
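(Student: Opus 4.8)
The statement asserts that the chambers of $\Res_{n-1}$ coincide exactly with the maximally refined chambers cut out by the root cones $\C(T)$ as $T$ ranges over alternating trees on $[n]$. The plan is to argue both that every such maximally refined chamber lies inside a single resonance chamber (so it refines the arrangement) and conversely that the root cones actually achieve the full refinement, i.e., no resonance chamber is split. First I would recall the two standard facts we can lean on: by Lemma \ref{lem:tri} the cones $\C(T)$ for alternating trees $T$ are precisely the top-dimensional cones appearing in central triangulations of the full root polytope $\P(\kg_n)$, and their union is all of $V_\emptyset$ (the cone over $\P(\kg_n)$ is $V_\emptyset$ itself, since every point of $V_\emptyset$ is a nonnegative combination of roots); and by Corollary \ref{cor:signcompatible} two such cones have full-dimensional intersection iff their trees are sign compatible, in which case on the overlap every ``known'' sign-vector entry $\sigma_I(T)=\sigma_I(T')$ is forced.

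The key step is to show that the resonance hyperplanes $U'_S$ are \emph{subordinate} to the fan of root cones: each hyperplane $U'_S$ is a union of walls (codimension-one faces) of root cones. For this I would use Lemma \ref{lem:signflow}: if $T$ is an alternating tree and $I \subseteq [n-1]$ is such that $G_I$ is not connected to $G_{[n]-I}$, then $\sigma_I(\xx)=0$ for all $\xx\in\C(T)$, i.e., $\C(T)\subseteq U'_I$. More to the point, I want: for any point $\xx$ lying on $U'_S$ but in the interior of no smaller resonance face, $\xx$ lies on a facet of whatever root cone contains it, and that facet lies inside $U'_S$. The cleanest route is to observe that the fan of root cones refines the resonance arrangement because a root cone $\C(T)$ never straddles a resonance hyperplane: if $\xx,\yy\in \mathrm{int}\,\C(T)$ then $\sigma_S(\xx)$ and $\sigma_S(\yy)$ can differ only when $T_S$ has both an entering and a leaving arc — but I claim that in that situation $\mathrm{int}\,\C(T)$ actually meets $U'_S$, so $U'_S$ passes through the interior of $\C(T)$. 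Hmm — that is exactly the obstacle: root cones \emph{do} get cut by resonance hyperplanes (the Figure \ref{fig:twosigns} example!). So the correct formulation is not that each $\C(T)$ lies in a chamber, but that the common refinement of \emph{all} the root cones is at least as fine as $\Res_{n-1}$, while also being realized by intersections. I would therefore restructure: (i) show every resonance chamber $C$ is a union of refined chambers (intersections of root cones), using that the $\C(T)$'s tile $V_\emptyset$ and that on each piece $\C(T_1)\cap\cdots\cap\C(T_k)$ with full-dimensional interior the sign vector is constant — constant because two points in the interior of such an intersection lie in the interior of each $\C(T_i)$, and by Corollary \ref{cor:signcompatible} sign compatibility forces all known entries to agree, while the ``?'' entries get pinned down by intersecting enough trees; (ii) conversely show no refined chamber is split by a resonance hyperplane, because a refined chamber $\mathrm{int}(\C(T_1)\cap\cdots\cap\C(T_k))$ that met $U'_S$ in its interior would contain points with $\sigma_S=+$ and $\sigma_S=-$, contradicting constancy of the sign vector from (i).

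So the argument runs: let $P$ be a maximally refined chamber, $P=\mathrm{int}(\C(T_1)\cap\cdots\cap\C(T_k))$ where $T_1,\dots,T_k$ are exactly the alternating trees whose root cones contain a fixed interior point $p\in P$ (maximality of $P$ forces $P$ to be cut out by all of them). By the remark following Lemma \ref{lem:signflow} and Corollary \ref{cor:signcompatible}, any two points of $P$ have the same resonance sign vector: the known entries agree by sign compatibility of the $T_i$, and if some entry $\sigma_S$ were a ``?'' for every $T_i$ yet non-constant on $P$, then $U'_S$ would cut $\mathrm{int}\,\C(T_1)\cap\cdots\cap\mathrm{int}\,\C(T_k)$ into two nonempty open pieces, each a smaller refined chamber, contradicting maximality. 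Hence $P$ is contained in a single resonance chamber. For the reverse inclusion, fix a resonance chamber $C$ and a point $p\in C$; since the root cones tile $V_\emptyset$, $p$ lies in some $\C(T)$, and then (after possibly perturbing $p$ within $C$ to land in an interior, which is fine because $C$ is open) $p$ lies in a unique maximally refined chamber $P_p$, which by the first part sits inside $C$; running over $p\in C$ and using that $C$ is connected and partitioned by the finitely many refined chambers it meets — none of which crosses $\partial C$ — forces $C$ to be exactly one of them, namely a maximally refined chamber. I expect the main obstacle to be the bookkeeping in step (i): making precise that after intersecting the \emph{full} list of trees through a point, every resonance entry is determined (no residual ``?'' can remain non-constant), which is where maximality of the refined chamber and a dimension count — the walls of the refined chamber are pieces of root-cone walls, and by Lemma \ref{lem:signflow} each root-cone wall where some $\sigma_I(T)$ flips from a known value lies on $U'_I$ — have to be combined carefully; once that is in hand, the two inclusions close up cleanly.
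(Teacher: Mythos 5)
There is a genuine gap, and it sits exactly where you flag ``the main obstacle'': your proof never establishes that the resonance hyperplanes are actually realized by walls of root cones, and without that the maximality argument in step (i) does not close. Concretely, when you suppose some entry $\sigma_S$ is non-constant on a maximally refined chamber $P=\mathrm{int}(\C(T_1)\cap\cdots\cap\C(T_k))$ and conclude that $U'_S$ ``would cut $P$ into two nonempty open pieces, each a smaller refined chamber, contradicting maximality,'' you are assuming the conclusion: a refined chamber is by definition the interior of an intersection of root cones, and the two pieces $P\cap\{\sigma_S>0\}$ and $P\cap\{\sigma_S<0\}$ are cut by a hyperplane, not by a root cone. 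To turn them into refined chambers you must exhibit additional alternating trees whose cones cut along $U'_S$ through that region --- i.e.\ you must show that every point of $U'_S$ lies on a (codimension-one) face of some root cone. That is precisely the nontrivial half of the paper's proof, and it is done by an explicit construction you never supply: for $\xx\in U'_I$ one has $\sum_{i\in I}x_i=0=\sum_{j\in[n]-I}x_j$, so $\xx=\xx_I+\xx_{[n]-I}$ with each summand induced by a nonnegative flow on an acyclic alternating graph on $I$, respectively on $[n]-I$; their disjoint union is an acyclic alternating graph with at most $n-2$ arcs, so by Lemma \ref{lem:sim} its cone is a lower-dimensional root cone containing $\xx$, contained in $U'_I$ by Lemma \ref{lem:signflow}. (Alternatively one could invoke Lemma \ref{lem:?to+} to manufacture a tree $T'$ with $\sigma_S(T')=+$ through a point of $P$ with $\sigma_S=+$, but that lemma comes later and needs its own proof; your sketch cites neither route.) The easy direction you do give --- walls of root cones lie in resonance hyperplanes, via Lemma \ref{lem:sim} and Lemma \ref{lem:signflow} --- matches the paper's first half, but by itself it only shows the common refinement of the root cones is \emph{at most} as fine as $\Res_{n-1}$ plus the cone walls, not that it is exactly the resonance arrangement.

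A secondary, smaller issue: in step (ii) you assert that a generic point $p$ of a resonance chamber ``lies in a unique maximally refined chamber $P_p$.'' Minimal (under inclusion) refined chambers exist since there are finitely many, but a minimal refined chamber contained in some refined chamber containing $p$ need not itself contain $p$; so the existence of a maximally refined chamber through $p$ also quietly relies on the covering statement above. The paper avoids all of this bookkeeping by reducing the proposition to the single set-theoretic identity that the union of the walls of the root cones equals the union of the resonance hyperplanes, and proving both containments directly; your outline reproduces one containment and correctly identifies, but does not prove, the other.
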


\begin{proof}
It will suffice to argue the complement: that the union of the walls of the root cones is precisely the resonance hyperplane arrangement.
 
We first show that any wall of a root cone lies in a hyperplane of the resonance arrangement. By Lemma \ref{lem:sim}, a wall in a root cone $\C(T)$ is itself a root cone for an acyclic graph with $n-2$ edges, i.e., a disjoint union of two trees $T_I\cup T_{[n]-I}$, with no edges connecting a vertex in $I$ with a vertex in $[n]-I$. Thus by Lemma \ref{lem:signflow}, we have that if $\xx \in \C(T_I \cup T_{[n]-I})$, $\sigma_I(\xx) = 0$. Therefore,
 \[
  \C(T_I \cup T_{[n]-I}) \subset U'_I.
 \]
 
Now we wish to show that for any point $\xx$ in a resonance hyperplane $U'_I$, there is a tree $T$ for which $\xx$ is on the boundary of $\C(T)$. But this is just to say that $\xx$ is in a root cone $\C(G)$ for some acyclic alternating graph with at most $n-2$ edges. Such a graph is easily constructed. 

Since $\xx \in U'_I$, we know in that both $\sum_{i \in I} x_i = 0$ and $\sum_{j \in [n]-I} x_j =0$. Consider the orthogonal pair of points $\xx_I = \sum_{i \in I} x_ie_i$, and $\xx_{[n]-I} = \sum_{j \in [n]-I} x_je_j$. We see $\xx_I$ lives in an $(|I|-1)$-dimensional subspace, and hence it is in the cone of some acyclic graph $G_I$. Likewise $\xx_{[n]-I}$ is induced by a graph $G_{[n]-I}$, and their sum, $\xx$, is induced by their disjoint union:
\[
 \xx \in \C( G_I \cup G_{[n]-I} ).
\] 
\end{proof}

\begin{defn}[Indexable collections]\label{def:indexable}
Let $\textbf{T}=\{T_1,\ldots,T_k\}$ be a set of pairwise sign compatible alternating trees on $[n]$. If the set of points simultaneously induced by each tree in the collection is full-dimensional, i.e., if $\dim(\C(T_1)\cap \cdots \cap \C(T_k)) = n-1$, we say $\textbf{T}$ is \textbf{indexable}.
\end{defn}

In other words, an indexable collection of trees corresponds to a collection of root cones whose intersection is a refined chamber. Since Proposition \ref{lem:chambersandroots} says that resonance chambers are maximally refined chambers, we let $\In_n$ denote the set of maximal (under inclusion) indexable collections of alternating trees on $[n]$.

By Proposition \ref{lem:chambersandroots}, then, the number of chambers in the resonance arrangement is the same as the number of maximal indexable collections.

\begin{cor}\label{cor:indexableR}
The number of maximal indexable collections of trees on $[n+1]$ equals the number of chambers in the $n$-dimensional resonance arrangement, i.e.,
\[
 R_n = |\In_{n+1}|.
\]
\end{cor}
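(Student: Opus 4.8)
The plan is to deduce Corollary \ref{cor:indexableR} directly from Proposition \ref{lem:chambersandroots} together with the definitions of indexable collection and of $\In_n$. The statement of Proposition \ref{lem:chambersandroots} identifies the chambers of the resonance arrangement with the maximally refined chambers obtained by intersecting root cones. What remains is purely bookkeeping: to translate ``maximally refined chamber coming from root cones on $[n+1]$'' into ``maximal indexable collection of alternating trees on $[n+1]$,'' and to reconcile the index shift (the rank-$n$ resonance arrangement $\Res_n$ lives on $V_\emptyset \subset \R^{n+1}$, and root cones on $[n+1]$ are $n$-dimensional).

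First I would recall that by Lemma \ref{lem:tri} (and the accompanying discussion), the only root cones that can be top-dimensional are those of the form $\C(T)$ for $T$ an alternating tree on $[n+1]$; lower-dimensional root cones are root cones of forests, which appear as faces of the $\C(T)$'s, so they never contribute new full-dimensional pieces to a refined chamber. Hence every refined chamber arising from root cones is, up to closure, the interior of a finite intersection $\C(T_1)\cap\cdots\cap\C(T_k)$ of full-dimensional cones; such an intersection is itself full-dimensional exactly when the $T_i$ are pairwise sign compatible and the total intersection stays $n$-dimensional, which is precisely Definition \ref{def:indexable}: $\{T_1,\dots,T_k\}$ is indexable. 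Moreover, by Corollary \ref{cor:signcompatible}, pairwise sign-compatibility is automatic on any collection whose full intersection is $n$-dimensional, so the reverse-inclusion order on refined chambers corresponds exactly to the inclusion order on indexable collections (adding a tree to an indexable collection either keeps it indexable and shrinks or leaves unchanged the refined chamber, or destroys indexability).

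Next I would observe that maximality matches up: a refined chamber is maximally refined (contains no smaller refined chamber) if and only if the corresponding indexable collection cannot be enlarged to a strictly larger indexable collection, i.e., it is an element of $\In_{n+1}$. Combining this with Proposition \ref{lem:chambersandroots}, which says the chambers of $\Res_n$ are exactly the maximally refined chambers of root cones on $[n+1]$, we get a bijection between the chambers of $\Res_n$ and $\In_{n+1}$, hence $R_n = |\In_{n+1}|$. I expect the only real point requiring care — the ``main obstacle,'' such as it is — to be verifying that distinct maximal indexable collections give distinct maximally refined chambers (injectivity of the map): two different maximal indexable collections could a priori cut out the same open region. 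This is resolved by noting that each maximal indexable collection $\mathbf{T}$ is recoverable from its refined chamber $C$ as the set of \emph{all} alternating trees $T$ on $[n+1]$ with $C \subseteq \C(T)$; maximality of $\mathbf{T}$ forces it to equal this canonically determined set, so the assignment $\mathbf{T} \mapsto C$ is injective, and surjectivity is exactly Proposition \ref{lem:chambersandroots}. With that in hand the corollary follows.
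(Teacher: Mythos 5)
Your argument is correct and takes essentially the same route as the paper: the paper gives no separate proof, treating the corollary as an immediate consequence of Proposition \ref{lem:chambersandroots} together with the definitions of indexable collections and of $\In_{n+1}$, which is exactly what you do. Your additional remarks on injectivity (recovering a maximal indexable collection from its chamber as the set of all alternating trees whose cones contain it) just make explicit the bookkeeping the paper leaves implicit.
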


There is a rather nice symmetry of the resonance arrangement $\Res_n$ given by cyclic permutation of coordinates, which we now discuss. Let $\omega: \R^{n+1} \to \R^{n+1}$ be the cyclic permutation of the standard basis given by
\[
 \omega \xx = (x_2,\ldots,x_{n+1},x_1).
\]
Now consider the action of $\omega$ on the full positive root cone:
\[
\C=\C(\kg_{n+1}^+)= \left\{ \sum_{1\leq i<j \leq n+1} f(i,j)(e_i-e_j) : f(i,j) \geq 0 \right \}.
\]

The following lemma is well known and we leave the proof of it as an exercise for the reader.

\begin{lem}\label{lem:cyclicC}
The cones $\C, \omega \C, \omega^2 \C, \ldots, \omega^n \C$ have pairwise disjoint interiors and their union is all of $V_{\emptyset}$. 
\end{lem}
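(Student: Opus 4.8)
The plan is to exhibit explicit points showing the interiors $\omega^k \C$ are disjoint, and then a dimension/measure count to conclude the union is everything. First I would describe the interior of $\C = \C(\kg_{n+1}^+)$ in coordinates: since $\kg_{n+1}^+$ is alternating only on single edges but the full positive-edge complete graph is not alternating, I instead use the circulation-free description directly from \eqref{eq:rootcone}. A cleaner route is to note that the interior of $\C$ consists of points $\xx \in V_{\emptyset}$ all of whose proper prefix sums are positive, i.e. $x_1 > 0$, $x_1 + x_2 > 0$, \dots, $x_1 + \cdots + x_n > 0$ (with $x_1 + \cdots + x_{n+1} = 0$). This is the standard fact that the cone spanned by the simple roots' \emph{positive} hull — equivalently the cone on the positive root polytope — is the set of $\xx$ with $\langle \xx, e_1 + \cdots + e_i \rangle > 0$ for $1 \le i \le n$; indeed each $e_i - e_j$ with $i<j$ has nonnegative such prefix sums, and strict positivity characterizes the interior. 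I would verify this equivalence in one short paragraph using the flow description: given positive prefix sums one solves for a strictly positive flow greedily, and conversely a strictly positive flow forces each prefix sum to be strictly positive.

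Next, with this description in hand, I would write $\omega^k \C$ as the set of $\xx \in V_{\emptyset}$ whose \emph{cyclic-window} sums starting at coordinate $k+1$ are positive: $\omega^k$ sends the prefix-sum conditions on coordinates $1, \ldots, i$ to the conditions on the cyclic intervals $\{k+1, k+2, \ldots, k+i\}$ (indices mod $n+1$). So the interior of $\omega^k\C$ is $\{\xx : \sum_{t=1}^{i} x_{k+t} > 0 \text{ for } 1 \le i \le n\}$, cyclically. To prove pairwise disjointness of interiors, suppose $\xx$ lies in the interiors of both $\omega^j\C$ and $\omega^k\C$ with $0 \le j < k \le n$. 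Consider the cyclic interval $J$ from position $j+1$ to position $k$ (of length $k - j \le n$): membership in $\omega^j\C$ gives $\sum_{t \in J} x_t > 0$, while membership in $\omega^k \C$, applied to the complementary-type window, gives $\sum_{t \notin J \cup \{\text{nothing}\}}$... more precisely the window of length $(n+1) - (k-j)$ starting at $k+1$ has positive sum, i.e. $\sum_{t \notin J} x_t > 0$ — but wait, that window has length $n+1-(k-j)$, and if $k - j \ge 1$ this length is $\le n$, so the condition applies; adding the two inequalities gives $\sum_{t} x_t > 0$, contradicting $\xx \in V_\emptyset$. So the interiors are pairwise disjoint.

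Finally, for the union being all of $V_{\emptyset}$: the $\C, \omega\C, \ldots, \omega^n\C$ are $n$-dimensional cones (each is full-dimensional in $V_{\emptyset}$ by Lemma \ref{lem:sim} since $\kg_{n+1}^+$ has $n$ edges, or rather because its interior is nonempty, containing e.g. $(n, n-1, \ldots, 1) - \frac{n+1}{2}(1,\ldots,1)$ type vectors), and by the disjointness just shown their $n+1$ copies tile a neighborhood. The slick way is volume: restrict to the unit sphere in $V_\emptyset$; each $\C \cap S$ has the same spherical volume $v > 0$ (as $\omega$ is an isometry of $V_\emptyset$), the interiors are disjoint, so their total spherical volume is $(n+1)v \le \vol(S)$. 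To get equality I would argue that a generic point $\xx \in V_\emptyset$ (one with all $2^{n+1} - 2$ cyclic-window sums nonzero) lies in \emph{some} $\omega^k\C$: order matters — given such generic $\xx$, choose $k$ so that the partial sums $x_{k+1}, x_{k+1}+x_{k+2}, \ldots$ (cyclically, $n$ of them) are all positive. Such a $k$ exists by the classical cycle lemma (Dvoretzky–Motzkin / Raney): among the $n+1$ cyclic rotations of a sequence summing to $0$ with no vanishing proper partial sum, exactly one has all proper partial sums positive. Hence $\xx \in \omega^k\C$ for that unique $k$, so the union of the (closed) cones covers a dense set, and being closed it covers all of $V_\emptyset$. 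The main obstacle is getting the prefix-sum description of $\C$ exactly right and marrying the cyclic-rotation bookkeeping with the cycle lemma — the geometry is standard but the index arithmetic modulo $n+1$ is where errors creep in, so I would state the cycle lemma precisely and apply it verbatim. (As the paper notes, this is well known, so a compressed version of the above suffices.)
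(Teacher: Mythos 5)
The paper gives no proof of this lemma at all (it is explicitly left as an exercise for the reader), so there is no argument of the authors to compare against; your proposal supplies a correct, standard proof. The three ingredients are all sound: the prefix-sum description of the positive root cone, namely $\C(\kg_{n+1}^+)=\{\xx\in V_{\emptyset} : x_1+\cdots+x_k\ge 0 \mbox{ for } 1\le k\le n\}$ with interior given by strict inequalities (this follows from writing $\xx=\sum_k c_k(e_k-e_{k+1})$ with $c_k$ the $k$th prefix sum, so the flow detour is not even needed); the disjointness argument, where the window from one starting position to the other and its complementary window are both proper cyclic windows whose positive sums would force $\sum_i x_i>0$; and the covering argument via the cycle lemma, whose existence half is the one-line ``rotate to start just after the position where the running prefix sum attains its minimum,'' combined with density of points having no vanishing proper cyclic window sum and closedness of the finite union of closed cones. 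One bookkeeping slip you should fix: with the paper's convention $\omega\xx=(x_2,\ldots,x_{n+1},x_1)$, we have $\xx\in\omega^k\C$ iff $\omega^{-k}\xx\in\C$, so the cyclic windows cutting out $\omega^k\C$ start at coordinate $n+2-k$ (mod $n+1$), not at $k+1$; since $k\mapsto$ starting position is still a bijection of $\{0,\ldots,n\}$ onto the possible anchors, the collection of cones and both halves of your argument are unaffected, but the stated correspondence should be corrected. Also note that Lemma \ref{lem:sim} does not apply to $\kg_{n+1}^+$ (it is not a forest), and the spherical-volume digression is superfluous once the cycle-lemma covering argument is in place.
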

 
Each of the cones in Lemma \ref{lem:cyclicC} contains the same number of chambers as the resonance arrangement. We record this observation as follows, where we let $R_n^+$ denote the number of resonance chambers in the positive root cone $\C(\kg_{n+1}^+)$.

\begin{cor}\label{cor:cyclicsymmetry}
The number of resonance chambers in $\Res_n$ is equal to $(n+1)$ times the number of resonance chambers in the positive root cone $\C(\kg_{n+1}^+)$: $R_n = (n+1)R_n^+$. In particular,
\[
 R_n^+ = \frac{|\In_{n+1}|}{(n+1)}.
\]
\end{cor}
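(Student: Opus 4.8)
The plan is to combine Lemma \ref{lem:cyclicC} with Corollary \ref{cor:indexableR}. First I would observe that the $R_n$ chambers of $\Res_n$ partition (up to the measure-zero set of hyperplanes) the ambient space $V_{\emptyset}$, and by Lemma \ref{lem:cyclicC} this same space is cut into the $n+1$ full-dimensional cones $\C, \omega\C, \ldots, \omega^n\C$ with pairwise disjoint interiors. Since the map $\omega$ is a linear isometry permuting coordinates cyclically, and the resonance arrangement $\Res_n$ is invariant under $\omega$ (permuting the normal vectors $u_S$ among themselves, as $u_{\omega S}$ with indices read cyclically, which just permutes the set of hyperplanes), the arrangement restricted to each cone $\omega^k\C$ is carried to the arrangement restricted to $\C$ by $\omega^{-k}$. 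Hence each cone $\omega^k\C$ contains exactly the same number of resonance chambers, namely $R_n^+$ by definition, giving $R_n = (n+1)R_n^+$.

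The one point requiring a small argument is that no resonance chamber straddles the boundary between two of the cones $\omega^k\C$: since the walls of $\C = \C(\kg_{n+1}^+)$ are walls of root cones, Proposition \ref{lem:chambersandroots} tells us they lie inside the union of the resonance hyperplanes, so each $\omega^k\C$ is itself a union of (closures of) resonance chambers. Thus the count $R_n^+$ of resonance chambers lying in the interior of $\C$ is well defined, every resonance chamber lies in exactly one $\omega^k\C$, and the partition $R_n = \sum_{k=0}^{n} (\text{\# resonance chambers in }\omega^k\C) = (n+1)R_n^+$ follows.

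Finally, the displayed formula for $R_n^+$ is immediate: by Corollary \ref{cor:indexableR} we have $R_n = |\In_{n+1}|$, so $R_n^+ = R_n/(n+1) = |\In_{n+1}|/(n+1)$.

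I expect the main (minor) obstacle is simply verifying the $\omega$-invariance of $\Res_n$ carefully — i.e., checking that $\omega$ sends the hyperplane $U'_S$ to $U'_{S'}$ for an appropriate nonempty proper $S' \subsetneq [n+1]$ (using the identification $U'_S = U'_{[n+1]-S}$ to handle the subsets containing $n+1$) — and the transversality remark above that the cone boundaries contribute no new chambers. Neither is deep; both follow from the definitions already in place, and indeed the authors flag the underlying Lemma \ref{lem:cyclicC} as an exercise.
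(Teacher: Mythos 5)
Your argument is correct and follows essentially the same route as the paper: Lemma \ref{lem:cyclicC} supplies the decomposition of $V_{\emptyset}$ into the $n+1$ cones $\omega^k\C(\kg_{n+1}^+)$, the cyclic symmetry $\omega$ of the arrangement matches the chambers in each cone with those in the positive root cone, and Corollary \ref{cor:indexableR} gives $R_n=|\In_{n+1}|$. You also spell out two points the paper leaves implicit (the $\omega$-invariance of $\Res_n$ and the fact that the cone boundaries lie in resonance hyperplanes, so no chamber straddles two cones), both of which check out.
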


In Figure \ref{fig:rootpoly2d} we see $\Res_2$ (actually the polytope $\P(\kg_3)$) labeled by alternating trees, and in Figure \ref{fig:labeledrank3}, we see $32/4=8$ chambers of $\Res_3$ that lie in the positive root cone $\C(\kg_4^+)$, labeled by indexable collections on $[4]$. 

\begin{figure}
\[
\begin{tikzpicture}[scale=2.5]
\coordinate (a) at (0,4);
\coordinate (b) at (2,0);
\coordinate (c) at (-2,0);
\coordinate (d) at (1.55,2.36);
\coordinate (e) at (-1.55,2.36);
\coordinate (f) at (0,1.2);
\coordinate (g) at (0,-.4);
\draw (a) to [in=90,out=-30]  (b);
\draw (b) to [in=-20,out=200]  (c);
\draw (c) to [in=210,out=90]  (a);
\draw (c) to [in=220,out=30] (d);
\draw (b) to [in=-40,out=150] (e);
\draw (a) to (g);
\draw (d) to [in=10,out=170] (e);
\draw (a) node[fill=black,circle, inner sep=2] {} node[above] {$e_2-e_3$};
\draw (b) node[fill=black,circle, inner sep=2] {} node[right] {$e_3-e_4$};
\draw (c) node[fill=black,circle, inner sep=2] {} node[left] {$e_1-e_2$};
\draw (d) node[fill=black,circle, inner sep=2] {} node[above right] {$e_2-e_4$};
\draw (e) node[fill=black,circle, inner sep=2] {} node[above left] {$e_1-e_3$};
\draw (f) node[fill=black,circle, inner sep=2] {} node[left,xshift=-3pt] {$e_1-e_4$};
\draw (-.6,.25) node {
\begin{tabular}{c}
\begin{tikzpicture}[baseline=0,>=stealth,scale=.5]
\foreach \x in {1,...,4}{
\coordinate (\x) at (\x,0);
}
\foreach \x in {1,3}{
 \draw (\x) node[below,fill=white,draw=black,circle,inner sep =2] {};
}
\foreach \x in {2,4}{
 \draw (\x) node[below,fill=black,draw=black,circle,inner sep =2] {};
}
\draw[->] (1) to [in=90,out=90] (2);
\draw[->] (1) to [in=90,out=90] (4);
\draw[->] (3) to [in=90,out=90] (4);
\end{tikzpicture}\\
\begin{tikzpicture}[baseline=0,>=stealth,scale=.5]
\foreach \x in {1,...,4}{
\coordinate (\x) at (\x,0);
}
\foreach \x in {1,3}{
 \draw (\x) node[below,fill=white,draw=black,circle,inner sep =2] {};
}
\foreach \x in {2,4}{
 \draw (\x) node[below,fill=black,draw=black,circle,inner sep =2] {};
}
\draw[->] (1) to [in=90,out=90] (2);
\draw[->] (1) to [in=90,out=90] (4);
\draw[->] (3) to [in=90,out=90] (2);
\end{tikzpicture}
\end{tabular}
};
\draw (.6,.25) node {
\begin{tabular}{c}
\begin{tikzpicture}[baseline=0,>=stealth,scale=.5]
\foreach \x in {1,...,4}{
\coordinate (\x) at (\x,0);
}
\foreach \x in {1,3}{
 \draw (\x) node[below,fill=white,draw=black,circle,inner sep =2] {};
}
\foreach \x in {2,4}{
 \draw (\x) node[below,fill=black,draw=black,circle,inner sep =2] {};
}
\draw[->] (1) to [in=90,out=90] (2);
\draw[->] (1) to [in=90,out=90] (4);
\draw[->] (3) to [in=90,out=90] (4);
\end{tikzpicture}
\\
\begin{tikzpicture}[baseline=0,>=stealth,scale=.5]
\foreach \x in {1,...,4}{
\coordinate (\x) at (\x,0);
}
\foreach \x in {1,3}{
 \draw (\x) node[below,fill=white,draw=black,circle,inner sep =2] {};
}
\foreach \x in {2,4}{
 \draw (\x) node[below,fill=black,draw=black,circle,inner sep =2] {};
}
\draw[->] (3) to [in=90,out=90] (2);
\draw[->] (1) to [in=90,out=90] (4);
\draw[->] (3) to [in=90,out=90] (4);
\end{tikzpicture}
\end{tabular}
};
\draw (1.2,1.2) node {
\begin{tikzpicture}[baseline=0,>=stealth,scale=.5]
\foreach \x in {1,...,4}{
\coordinate (\x) at (\x,0);
}
\foreach \x in {1,2,3}{
 \draw (\x) node[below,fill=white,draw=black,circle,inner sep =2] {};
}
\foreach \x in {4}{
 \draw (\x) node[below,fill=black,draw=black,circle,inner sep =2] {};
}
\draw[->] (1) to [in=90,out=90] (4);
\draw[->] (2) to [in=90,out=90] (4);
\draw[->] (3) to [in=90,out=90] (4);
\end{tikzpicture}
};
\draw (.45,2.15) node {
\begin{tabular}{c}
\begin{tikzpicture}[baseline=0,>=stealth,scale=.5]
\foreach \x in {1,...,4}{
\coordinate (\x) at (\x,0);
}
\foreach \x in {1,2}{
 \draw (\x) node[below,fill=white,draw=black,circle,inner sep =2] {};
}
\foreach \x in {3,4}{
 \draw (\x) node[below,fill=black,draw=black,circle,inner sep =2] {};
}
\draw[->] (1) to [in=90,out=90] (4);
\draw[->] (2) to [in=90,out=90] (3);
\draw[->] (2) to [in=90,out=90] (4);
\end{tikzpicture}
\\
\begin{tikzpicture}[baseline=0,>=stealth,scale=.5]
\foreach \x in {1,...,4}{
\coordinate (\x) at (\x,0);
}
\foreach \x in {1,2}{
 \draw (\x) node[below,fill=white,draw=black,circle,inner sep =2] {};
}
\foreach \x in {3,4}{
 \draw (\x) node[below,fill=black,draw=black,circle,inner sep =2] {};
}
\draw[->] (1) to [in=90,out=90] (3);
\draw[->] (1) to [in=90,out=90] (4);
\draw[->] (2) to [in=90,out=90] (4);
\end{tikzpicture}
\end{tabular}
};
\draw (.5,3) node {
\begin{tabular}{c}
\begin{tikzpicture}[baseline=0,>=stealth,scale=.5]
\foreach \x in {1,...,4}{
\coordinate (\x) at (\x,0);
}
\foreach \x in {1,2}{
 \draw (\x) node[below,fill=white,draw=black,circle,inner sep =2] {};
}
\foreach \x in {3,4}{
 \draw (\x) node[below,fill=black,draw=black,circle,inner sep =2] {};
}
\draw[->] (1) to [in=90,out=90] (4);
\draw[->] (2) to [in=90,out=90] (3);
\draw[->] (2) to [in=90,out=90] (4);
\end{tikzpicture}
\\
\begin{tikzpicture}[baseline=0,>=stealth,scale=.5]
\foreach \x in {1,...,4}{
\coordinate (\x) at (\x,0);
}
\foreach \x in {1,2}{
 \draw (\x) node[below,fill=white,draw=black,circle,inner sep =2] {};
}
\foreach \x in {3,4}{
 \draw (\x) node[below,fill=black,draw=black,circle,inner sep =2] {};
}
\draw[->] (1) to [in=90,out=90] (3);
\draw[->] (2) to [in=90,out=90] (3);
\draw[->] (2) to [in=90,out=90] (4);
\end{tikzpicture}
\end{tabular}
};
\draw (-.5,3) node {
\begin{tabular}{c}
\begin{tikzpicture}[baseline=0,>=stealth,scale=.5]
\foreach \x in {1,...,4}{
\coordinate (\x) at (\x,0);
}
\foreach \x in {1,2}{
 \draw (\x) node[below,fill=white,draw=black,circle,inner sep =2] {};
}
\foreach \x in {3,4}{
 \draw (\x) node[below,fill=black,draw=black,circle,inner sep =2] {};
}
\draw[->] (1) to [in=90,out=90] (4);
\draw[->] (1) to [in=90,out=90] (3);
\draw[->] (2) to [in=90,out=90] (3);
\end{tikzpicture}
\\
\begin{tikzpicture}[baseline=0,>=stealth,scale=.5]
\foreach \x in {1,...,4}{
\coordinate (\x) at (\x,0);
}
\foreach \x in {1,2}{
 \draw (\x) node[below,fill=white,draw=black,circle,inner sep =2] {};
}
\foreach \x in {3,4}{
 \draw (\x) node[below,fill=black,draw=black,circle,inner sep =2] {};
}
\draw[->] (1) to [in=90,out=90] (3);
\draw[->] (2) to [in=90,out=90] (3);
\draw[->] (2) to [in=90,out=90] (4);
\end{tikzpicture}
\end{tabular}
};
\draw (-.45,2.15) node {
\begin{tabular}{c}
\begin{tikzpicture}[baseline=0,>=stealth,scale=.5]
\foreach \x in {1,...,4}{
\coordinate (\x) at (\x,0);
}
\foreach \x in {1,2}{
 \draw (\x) node[below,fill=white,draw=black,circle,inner sep =2] {};
}
\foreach \x in {3,4}{
 \draw (\x) node[below,fill=black,draw=black,circle,inner sep =2] {};
}
\draw[->] (1) to [in=90,out=90] (4);
\draw[->] (1) to [in=90,out=90] (3);
\draw[->] (2) to [in=90,out=90] (3);
\end{tikzpicture}
\\
\begin{tikzpicture}[baseline=0,>=stealth,scale=.5]
\foreach \x in {1,...,4}{
\coordinate (\x) at (\x,0);
}
\foreach \x in {1,2}{
 \draw (\x) node[below,fill=white,draw=black,circle,inner sep =2] {};
}
\foreach \x in {3,4}{
 \draw (\x) node[below,fill=black,draw=black,circle,inner sep =2] {};
}
\draw[->] (1) to [in=90,out=90] (3);
\draw[->] (1) to [in=90,out=90] (4);
\draw[->] (2) to [in=90,out=90] (4);
\end{tikzpicture}
\end{tabular}
};
\draw (-1.2,1.2) node {
\begin{tikzpicture}[baseline=0,>=stealth,scale=.5]
\foreach \x in {1,...,4}{
\coordinate (\x) at (\x,0);
}
\foreach \x in {1}{
 \draw (\x) node[below,fill=white,draw=black,circle,inner sep =2] {};
}
\foreach \x in {2,3,4}{
 \draw (\x) node[below,fill=black,draw=black,circle,inner sep =2] {};
}
\draw[->] (1) to [in=90,out=90] (2);
\draw[->] (1) to [in=90,out=90] (3);
\draw[->] (1) to [in=90,out=90] (4);
\end{tikzpicture}
};
\end{tikzpicture}
\]
\caption{The 8 chambers of the resonance arrangement that lie in the postive root cone, labeled by indexable collections of alternating trees on $[4]$. Labelings of all 32 chambers appear in four such groups of 8 obtained by cyclic permutation of coordinates, i.e., cyclic permutation of the nodes in the trees.}\label{fig:labeledrank3}
\end{figure}
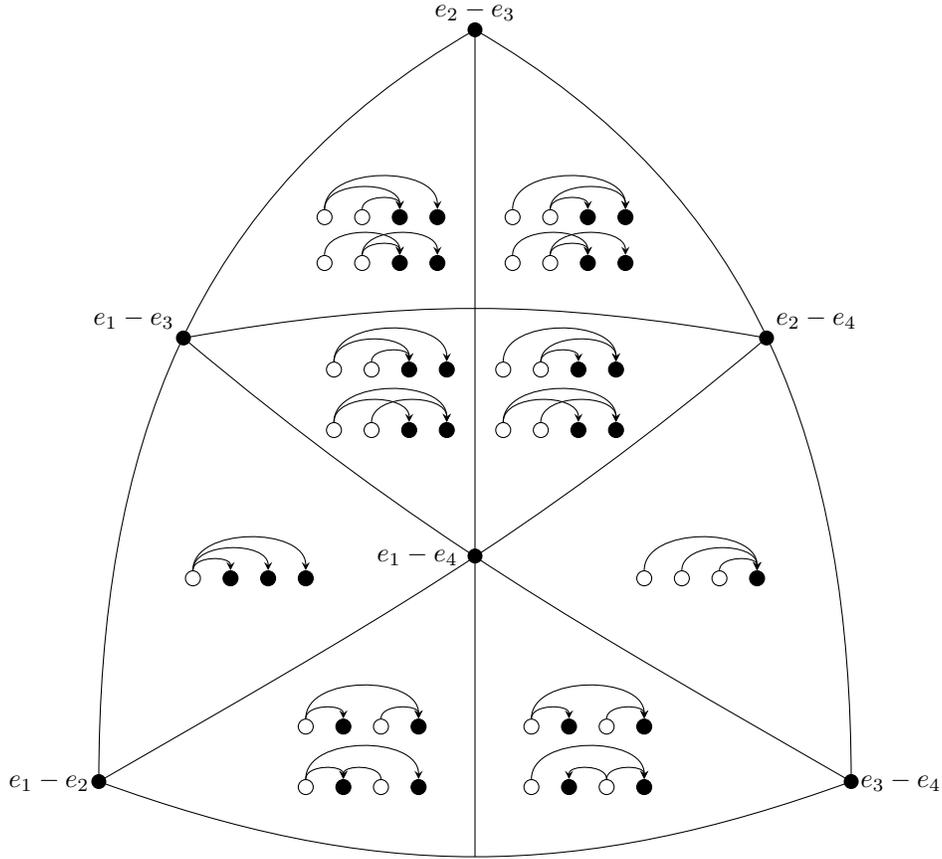
 
As discussed in the introduction, resonance chambers correspond to ``maximal unbalanced families,'' and here we have a maximal collection of trees satisfying a certain geometric condition. One may wonder whether there is a simple, direct link between maximal indexable collections and maximal unbalanced families. We do not know of such a link, but it seems worthy of investigation.

\begin{p}
Find a direct bijection between maximal unbalanced families and maximal indexable collections of alternating trees.
\end{p}

\subsection{The compatibility graph for alternating trees}\label{sec:graph}

We now use sign compatibility to define a graph whose vertices are alternating trees, with two trees adjacent if and only if they are sign compatible.

\begin{defn}[Compatibility graph]\label{def:compatiblegraph}
Define the \textbf{compatibility graph} $\Gamma_n = (V_n,E_n)$ for alternating trees on $[n]$ by 
\[
 V_n = \{ \mbox{ alternating trees $T$ on $[n]$ }\},
\]
and
\[
 E_n = \{ (T,T') : \mbox{ $T$ and $T'$ are sign compatible } \}.
\]
\end{defn}

Any indexable collection is a clique in $\Gamma_n$, but the converse is not generally true. That is, there exist pairwise sign compatible trees (i.e., with pairwise full-dimensional intersection) whose intersection is not full-dimensional, as the next example shows. 

\begin{exam}
The trees shown in Figure \ref{fig:badtriple} are pairwise sign compatible, but their mutual intersection is not full-dimensional. To see this, let $f_1$, $f_2$, and $f_3$ denote nonnegative flows on $T_1$, $T_2$, and $T_3$ respectively, such that 
\[
 \xx(T_1;f_1) = \xx(T_2;f_2) = \xx(T_3;f_3).
\]
This gives a system of linear equations for the flows $f_*(i,j)$, which includes the following relations
\begin{align*}
 f_1(1,4) &=-f_1(1,5)+f_2(1,6) \\
 f_2(2,6)+f_2(3,6) &= f_1(3,6)-f_2(1,6) \\
 0 &= -f_1(3,6) -f_1(3,5)+f_3(3,5) \\
 f_3(1,5)  &= f_1(1,5)+f_1(3,5)-f_3(3,5) 
\end{align*}
and summing, we find
\[
f_1(1,4)+f_2(2,6)+f_2(3,6) + f_3(1,5) = 0.
\]
Since the flows $f_1$, $f_2$, and $f_3$ are nonnegative, this implies that $f_1(1,4)=f_2(2,6)=f_2(3,6)=f_3(1,5) = 0$. In particular $\xx$ is not induced by a positive flow on any of the trees and $\C(T_1)\cap \C(T_2) \cap \C(T_3)$ is not full-dimensional. In fact, having flow 0 on two arcs of $T_2$ implies this intersection is at most 3-dimensional.

We can also see that the mutual intersection of all three trees is not full dimensional through considering sign vectors.  In particular
$$\sigma_{\{2, 3, 4, 5\}}(T_2)=+$$ so that for any $\xx\in \C(T_2),$ the resonance sign vector $\sigma_{\{2, 3, 4, 5\}}(\xx)\in\{0, +\}.$ However, $$\sigma_{\{2, 4\}}(T_1)=\sigma_{\{3, 5\}}(T_3)=-,$$ which implies that the resonance sign vector of any point $\xx\in \C(T_1)\cap\C(T_3)$ has $\sigma_{\{ 2, 3, 4, 5\}}(\xx)\in \{0, -\}.$
\end{exam}

\begin{figure}
\[
\begin{array}{c c c}
\begin{tikzpicture}[baseline=0,>=stealth,scale=.65]
\foreach \x in {1,...,6}{
\coordinate (\x) at (\x,0);
}
\foreach \x in {1,2,3}{
 \draw (\x) node[below,fill=white,draw=black,circle,inner sep =2] {} node[below,yshift=-5pt] {$\x$};
}
\foreach \x in {4,5,6}{
 \draw (\x) node[below,fill=black,draw=black,circle,inner sep =2] {} node[below,yshift=-5pt] {$\x$};
}
\draw[->] (1) to [in=90,out=90]  (4);
\draw[->] (1) to [in=90,out=90]  (5);
\draw[->] (2) to [in=90,out=90]  (4);
\draw[->] (3) to [in=90,out=90]  (5);
\draw[->] (3) to [in=90,out=90]  (6);
\end{tikzpicture}
&
\begin{tikzpicture}[baseline=0,>=stealth,scale=.65]
\foreach \x in {1,...,6}{
\coordinate (\x) at (\x,0);
}
\foreach \x in {1,2,3}{
 \draw (\x) node[below,fill=white,draw=black,circle,inner sep =2] {} node[below,yshift=-5pt] {$\x$};
}
\foreach \x in {4,5,6}{
 \draw (\x) node[below,fill=black,draw=black,circle,inner sep =2] {} node[below,yshift=-5pt] {$\x$};
}
\draw[->] (1) to [in=90,out=90]  (6);
\draw[->] (2) to [in=90,out=90]  (5);
\draw[->] (2) to [in=90,out=90]  (6);
\draw[->] (3) to [in=90,out=90]  (4);
\draw[->] (3) to [in=90,out=90]  (6);
\end{tikzpicture}
&
\begin{tikzpicture}[baseline=0,>=stealth,scale=.65]
\foreach \x in {1,...,6}{
\coordinate (\x) at (\x,0);
}
\foreach \x in {1,2,3}{
 \draw (\x) node[below,fill=white,draw=black,circle,inner sep =2] {} node[below,yshift=-5pt] {$\x$};
}
\foreach \x in {4,5,6}{
 \draw (\x) node[below,fill=black,draw=black,circle,inner sep =2] {} node[below,yshift=-5pt] {$\x$};
}
\draw[->] (1) to [in=90,out=90]  (4);
\draw[->] (1) to [in=90,out=90]  (5);
\draw[->] (2) to [in=90,out=90]  (4);
\draw[->] (2) to [in=90,out=90]  (6);
\draw[->] (3) to [in=90,out=90]  (5);
\end{tikzpicture}
\\
T_1 & T_2 & T_3
\end{array}
\]
\caption{Pairwise compatible alternating trees whose intersection $\C(T_1)\cap\C(T_2)\cap\C(T_3)$ is not full-dimensional.}\label{fig:badtriple}
\end{figure}
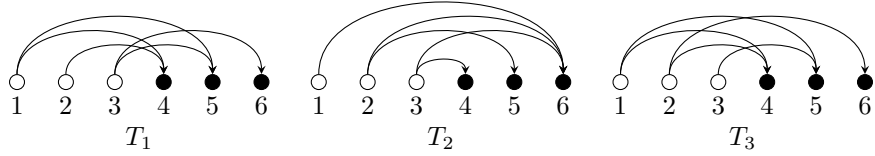

\begin{q}\label{q:cliques}
Which cliques in $\Gamma_n$ are/are not indexable collections?
\end{q}

While we have no good answer at the moment, we can say a bit more about maximal indexable collections in terms of the compatibility graph.
 
\begin{thm}\label{thm:maxclique}
All maximal indexable sets in $\In_n$ are maximal cliques in the compatibility graph $\Gamma_n$.
\end{thm}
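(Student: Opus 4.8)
The plan is a proof by contradiction. Every indexable collection is a clique of $\Gamma_n$, so $\mathbf{T}=\{T_1,\dots,T_k\}\in\In_n$ is a clique; assume it is \emph{not} maximal, i.e.\ some alternating tree $T'\notin\mathbf{T}$ is sign compatible with each $T_i$, and the aim is to reach a contradiction by showing $T'\in\mathbf{T}$. I would work geometrically. Set $\bar C=\C(T_1)\cap\cdots\cap\C(T_k)$ and $C=\mathrm{int}(\bar C)$. First I would verify, from Proposition \ref{lem:chambersandroots} and the maximality of $\mathbf{T}$, that $C$ is a chamber of the resonance arrangement: if the refined chamber $C$ properly contained another refined chamber, the trees cutting out that smaller one could be adjoined to $\mathbf{T}$ with the collection remaining indexable — the new pairwise intersections being full dimensional, hence those pairs sign compatible by Corollary \ref{cor:signcompatible} — contradicting maximality. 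Thus $C$ is open, convex, connected, and disjoint from every resonance hyperplane, and the same argument shows $\mathbf{T}$ equals the set of \emph{all} alternating trees $T$ with $C\subseteq\C(T)$; so it suffices to prove $C\subseteq\C(T')$.

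Next, a dichotomy. By Lemma \ref{lem:sim}, $\C(T')$ is a simplicial cone in the $(n-1)$-dimensional space $V_{\emptyset}$, cut out by its $n-1$ facet inequalities $\langle\xx,u_{I_m}\rangle\ge 0$, where each $I_m$ is one of the two components of $T'$ with an edge deleted, chosen so that $\C(T')$ lies on the nonnegative side of $U'_{I_m}$ (equivalently, $\ind(T'_{I_m})=\emptyset$). Since $C$ is connected and avoids every resonance hyperplane, either $C\subseteq\C(T')$ or $C\cap\C(T')=\emptyset$. In the first case we are done: $\C(T')\cap\bar C\supseteq C$ is full dimensional, so $\mathbf{T}\cup\{T'\}$ is indexable (pairwise sign compatible, full-dimensional intersection), contradicting maximality of $\mathbf{T}$. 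So assume $C\cap\C(T')=\emptyset$; because $C$ avoids each $U'_{I_m}$ but is not contained in $\C(T')$, there is an index $m_0$ with $\langle\xx,u_I\rangle<0$ for every $\xx\in C$, where $I:=I_{m_0}$.

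This is the crux. Fix $T\in\mathbf{T}$. If $\ind(T_I)=\emptyset$, then by (the proof of) Lemma \ref{lem:signflow}, $\langle\xx,u_I\rangle\ge 0$ for all $\xx\in\C(T)\supseteq C$, contradicting the previous paragraph; so $\ind(T_I)\ne\emptyset$. Also $\ind(T'_I)=\emptyset$, so sign compatibility of $T$ with $T'$ (passing to $[n]\setminus I$ if $n\in I$) forbids $\outd(T_I)=\emptyset$; so $\outd(T_I)\ne\emptyset$. Hence in \emph{every} $T\in\mathbf{T}$ the induced subgraph on $I$ has both an incoming and an outgoing arc, i.e.\ $\sigma_I(T)=?$; equivalently, no $\C(T)$ with $T\in\mathbf{T}$ has a facet on $U'_I$, so $\bar C$ has no facet on $U'_I$ and yet lies strictly on the negative side of $U'_I$. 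The task is to prove that this configuration is impossible, and I expect it to be the main obstacle. Crucially, it cannot be dispatched using sign vectors alone: the obvious target — a tree $T\in\mathbf{T}$ with $\sigma_I(T)=-$, which would be sign incompatible with $T'$ at once — has just been shown not to exist, so any proof must use the combinatorial structure of alternating trees and instead locate a disagreement between $T'$ and some member of $\mathbf{T}$ at a subset \emph{other than} $I$. This is exactly where the ``bad triple'' phenomenon of Figure \ref{fig:badtriple}, which blocks a naive converse of ``indexable $\Rightarrow$ clique,'' has to be shown incompatible with maximality of $\mathbf{T}$.

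To break the obstruction I would return to flows and Hoffman's circulation theorem (Theorem \ref{thm:Hoffman}), as in the proof of Corollary \ref{cor:signcompatible}. Sign compatibility furnishes a strictly positive circulation on each graph $C(T',T)$, $T\in\mathbf{T}$; the plan is to splice these together with the structure of $T'$ across the cut $(I,[n]\setminus I)$ — using the unique edge of $T'$ crossing this cut, reversed, to reroute through a well-chosen member of $\mathbf{T}$ — and thereby produce an alternating tree $T''$ carrying a strictly positive flow whose induced point lies in $C$. Such a $T''$ contains a point of $C$, hence all of $C$ (as $C$ is connected and disjoint from $\partial\C(T'')$), so $T''\in\mathbf{T}$; engineering the construction so that $T''$ is sign incompatible with $T'$ then contradicts the choice of $T'$ and completes the proof. (Alternatively one can try to use Hoffman's theorem to certify directly a point of $C$ lying in $\C(T')$, contradicting $C\cap\C(T')=\emptyset$.) Making this splicing precise, and checking that the resulting circulation graph satisfies Hoffman's feasibility criterion, is the real difficulty — the step I expect to be hardest.
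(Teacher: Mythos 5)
Your reduction is sound and in fact runs parallel to the paper's own argument: you correctly observe that a maximal indexable $\mathbf{T}$ cuts out a resonance chamber $C$ and consists of \emph{all} trees whose cones contain $C$; you correctly split into the cases $C\subseteq\C(T')$ (immediate contradiction) and $C\cap\C(T')=\emptyset$; and in the second case you correctly isolate a facet subset $I$ of $\C(T')$ with $\langle\xx,u_I\rangle<0$ on $C$, $\sigma_I(T')=+$, and $\sigma_I(T)=?$ for every $T\in\mathbf{T}$ (using Lemma \ref{lem:signflow}, Corollary \ref{cor:signcompatible}, and Proposition \ref{lem:chambersandroots} exactly as the paper does in its own reduction). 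But at this point your proof stops being a proof. The step you yourself flag as ``the real difficulty'' is precisely the theorem's technical core, and it is left as a plan rather than an argument: you need to show that a point $\xx$ in the interior of the resonance chamber $C$, which is induced by a strictly positive flow on some $T\in\mathbf{T}$ and has $\sigma_I(\xx)=-$, is also induced by a strictly positive flow on \emph{some} alternating tree $T''$ with $\outd(T''_I)=\emptyset$; such a $T''$ contains $C$, hence lies in $\mathbf{T}$, and is sign incompatible with $T'$, giving the contradiction. This is exactly (the mirror of) the paper's Lemma \ref{lem:?to+}, whose proof occupies Lemma \ref{lem:swapflow} and an induction: one adjoins a carefully chosen arc to create a cycle, augments the flow around the cycle, and deletes the unique zero-flow arc --- the genericity of $\xx$ (all subset sums nonzero) is what guarantees that the zero-flow arc is unique, so that the resulting graph is again a connected alternating tree rather than a forest.

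Your proposed substitute --- splicing the Hoffman circulations on the graphs $C(T',T)$, $T\in\mathbf{T}$, across the unique edge of $T'$ crossing the cut $(I,[n]\setminus I)$ --- is not carried out, and as stated it does not obviously work: Hoffman's theorem (Theorem \ref{thm:Hoffman}) certifies existence of a circulation on a prescribed graph, but it does not by itself produce an alternating \emph{tree}, nor control the sign $\sigma_I$ of the output, nor guarantee that the induced point lies in $C$; you specify neither the graph to which Hoffman's criterion is applied, nor why the feasibility condition holds in this configuration, nor how ``rerouting through a well-chosen member of $\mathbf{T}$'' avoids the bad-triple phenomenon you cite. Since everything before this point is comparatively routine bookkeeping with sign vectors, the missing lemma is not a detail but the substance of the theorem, so the proposal as written has a genuine gap.
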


By Corollary \ref{cor:indexableR}, we have the following comparison.

\begin{cor}\label{cor:indexableG}
The number of chambers $R_n$ of the rank $n$ resonance arrangement is bounded above by the number of maximal cliques of $\Gamma_{n+1}$.
\end{cor}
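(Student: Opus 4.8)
The plan is to use that any indexable collection is automatically a clique of $\Gamma_n$, so a maximal indexable collection $\mathbf{T}$ is in particular a clique, and it remains only to show it is not properly contained in a larger clique. So fix an alternating tree $T'$ on $[n]$ that is sign compatible with every tree in $\mathbf{T}$; I will show $T' \in \mathbf{T}$, which gives the result.

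Before that, I would extract two consequences of maximality. Write $\overline{C} := \bigcap_{T \in \mathbf{T}}\C(T)$. By Proposition \ref{lem:chambersandroots}, the maximally refined chambers cut out by the root cones are exactly the chambers of $\Res_{n-1}$; since $\mathbf{T}$ is a \emph{maximal} indexable collection, $C := \mathrm{int}(\overline{C})$ is therefore a single resonance chamber. The second, key, consequence is that $\mathbf{T} = \{\,T : C \subseteq \C(T)\,\}$: the inclusion ``$\subseteq$'' is clear, and for ``$\supseteq$'', if $C \subseteq \C(T)$ then any $\xx \in C$ lies in the interior of $\C(T)$ and of every member of $\mathbf{T}$ (a point of an open resonance chamber misses the walls of all root cones, again by Proposition \ref{lem:chambersandroots}), so $\mathbf{T} \cup \{T\}$ has a full-dimensional intersection and is indexable, whence $T \in \mathbf{T}$ by maximality.

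Thus it is enough to prove $C \subseteq \C(T')$. Since $C$ is connected and avoids every resonance hyperplane, while the boundary of $\C(T')$ lies inside the resonance arrangement (Proposition \ref{lem:chambersandroots}), $C$ is either contained in $\C(T')$ or disjoint from it; assume for contradiction it is disjoint. Now $\C(T')$ is the simplicial cone $\bigcap_{e \in E(T')}\{\,\xx : \langle \xx, u_{S_e}\rangle \ge 0\,\}$, where $S_e$ is the vertex set of the component of $T'-e$ containing the tail of $e$ (so $|\ind(T'_{S_e})|=0$, i.e.\ $\sigma_{S_e}(T') = +$). Since the chamber $C$ is not contained in this intersection, some $S^* := S_{e^*}$ satisfies $\sigma_{S^*}(C) = -$. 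Applying Lemma \ref{lem:signflow} to each $T \in \mathbf{T}$ (all of which have $C \subseteq \C(T)$) rules out $\sigma_{S^*}(T) = +$, so $\sigma_{S^*}(T) \in \{-, ?\}$ for every $T \in \mathbf{T}$.

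The heart of the argument is the claim that every sign of the chamber $C$ is witnessed by a tree of $\mathbf{T}$; concretely, that $\sigma_{S^*}(C) = -$ forces $\sigma_{S^*}(T) = -$ for at least one $T \in \mathbf{T}$. Granting this, that $T$ and $T'$ disagree on the known coordinate $S^*$ of their tree sign vectors ($\{\sigma_{S^*}(T), \sigma_{S^*}(T')\} = \{-,+\}$), so $T$ and $T'$ are sign incompatible, contradicting the choice of $T'$ and completing the proof. To establish the claim I would argue by contradiction and pass to dual cones: if $|\outd(T_{S^*})| \neq 0$ for every $T \in \mathbf{T}$, then each $T$ supplies a ray of $\C(T)$ lying strictly on the positive side of $U'_{S^*}$, while $\sigma_{S^*}(C) = -$ says precisely that $-u_{S^*} \in \overline{C}^{\vee} = \sum_{T \in \mathbf{T}} \C(T)^{\vee}$, with each $\C(T)^{\vee}$ generated by the vectors $u_{S_e}$, $e \in E(T)$. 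Feeding these rays into Hoffman's circulation theorem (Theorem \ref{thm:Hoffman}), in the spirit of the proof of Corollary \ref{cor:signcompatible}, should produce a point of $\overline{C} = \bigcap_{T}\C(T)$ lying strictly on the positive side of $U'_{S^*}$, contradicting $\sigma_{S^*}(C) = -$. I expect this circulation/convexity step to be the only genuinely delicate part; the rest is the sign-vector bookkeeping together with the two maximality observations above.
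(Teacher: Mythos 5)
Your high-level architecture is fine and in fact shadows the paper's: the corollary is a one-line consequence of Corollary \ref{cor:indexableR} together with Theorem \ref{thm:maxclique} (maximal indexable collections are maximal cliques), and what you are really doing is reproving that theorem. Your two maximality consequences, the containment-or-disjointness dichotomy for $C$ versus $\C(T')$, the facet description of $\C(T')$ by the halfspaces $\langle \xx, u_{S_e}\rangle \ge 0$, and the use of Lemma \ref{lem:signflow} to force $\sigma_{S^*}(T)\in\{-,?\}$ for all $T\in\mathbf{T}$ are all correct (modulo the harmless bookkeeping that when $n\in S_e$ you must index by the complementary set, which flips the sign consistently on both trees). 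The genuine gap is exactly the step you flag: the claim that $\sigma_{S^*}(C)=-$ forces $\sigma_{S^*}(T)=-$ for some $T\in\mathbf{T}$. That claim is precisely the content of the paper's Lemma \ref{lem:?to+} (in mirrored form, fed through your second consequence), and the paper needs the nontrivial flow-augmentation Lemma \ref{lem:swapflow} to prove it. Your sketch does not substitute for it: from ``every $T\in\mathbf{T}$ has an out-arc from $S^*$'' you obtain a generator ray of each $\C(T)$ strictly on the positive side of $U'_{S^*}$, but the inference you then want---that the intersection $\bigcap_{T}\C(T)$ meets the strict positive side---is false for general families of cones (two cones can each cross a hyperplane while their intersection lies entirely on one side), so it can only come from structure you have not identified. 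The dual-cone identity $-u_{S^*}\in\sum_{T}\C(T)^{\vee}$ merely says $-u_{S^*}$ is a nonnegative combination of the facet normals $u_{S_e(T)}$ modulo the all-ones vector, and Hoffman's theorem applies to a single digraph with arc bounds; the paper's use of it (Corollary \ref{cor:signcompatible}) hinges on the two-tree graph $C(T,T')$, and you do not specify what digraph, bounds, or cut condition would play that role for an arbitrary family $\mathbf{T}$. So the heart of the argument is missing.

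The claim you need is true, and it can be filled in either by citing Lemma \ref{lem:?to+} after reversing all arcs, or by a direct feasibility argument that avoids Lemma \ref{lem:swapflow}: pick $\xx\in C$ (so no proper nonempty subset of coordinates sums to zero), and consider the transportation problem shipping the positive coordinates of $\xx$ to the negative ones with all arcs allowed except those from positive coordinates in $S^*$ to negative coordinates outside $S^*$; the Gale--Hoffman feasibility condition for this network reduces to the single inequality $\sum_{i\in S^*}x_i\le 0$, which holds since $\sigma_{S^*}(\xx)=-$, and a vertex of the resulting transportation polytope has forest support, which the genericity of $\xx$ forces to be a spanning tree. That tree is alternating, carries a strictly positive flow inducing $\xx$, and has $\outd(T_{S^*})=\emptyset$, so your dichotomy plus your second consequence put it in $\mathbf{T}$ with $\sigma_{S^*}(T)=-$, completing the contradiction. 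As written, however, the proposal defers exactly this step to an unproven and, as sketched, invalid convexity argument, so it does not yet constitute a proof.
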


\begin{exam}
There are 250 alternating trees on $[5],$ so that $|V_5|=250.$  The graph $\Gamma_5$ is shown in Figure \ref{fig:W8}, where it is divided into 10 components of size one and 20 components of size 12.  There are 370 maximal cliques in this graph, each of which is a maximal set of indexable trees. Hence $R_4=370.$  

In $\Gamma_6$, however, there are $18,552$  maximal cliques yet only $11,296$ of these correspond to the maximal indexable sets labeling the chambers of $\Res_5$.
\end{exam}
  
\begin{figure}
\centering
 \includegraphics[width=.75\linewidth]{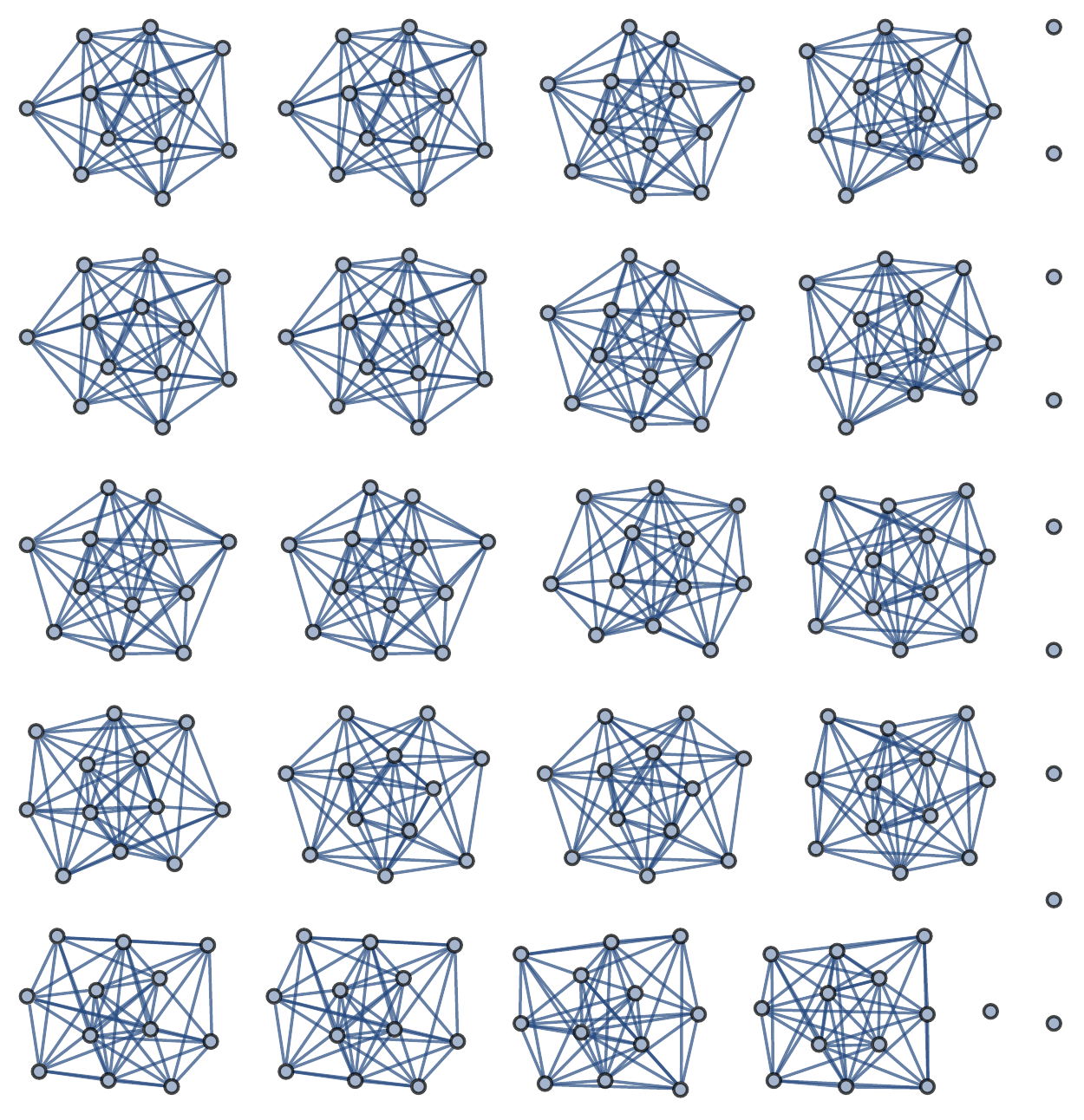}
\caption{The graph $\Gamma_5$ on 250 vertices (corresponding to the 250 alternating trees on 5 vertices). There are 20 isomorphic components on 12 vertices, and 10 singleton vertices.} \label{fig:W8}
\end{figure}

The proof of Theorem \ref{thm:maxclique} relies on the following lemma, which essentially says that any point in the interior of a root cone with $\sigma_I(\xx)=+$ can be induced by a positive flow on an alternating tree with $\sigma_I(T) = +$.

\begin{lem}\label{lem:?to+}
Suppose $T$ is an alternating tree on $[n]$, and suppose $\xx \in \C(T)$ is induced by a positive flow on $T$. Let $\emptyset \neq I \subsetneq [n]$ such that $\sigma_I(\xx)=+$. Then there exists an alternating tree $T'$ on $[n]$ such that $\sigma_I(T')=+$ and $\xx$ is also induced by a positive flow on $T'$.
\end{lem}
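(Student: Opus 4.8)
The plan is to realize $\xx$ by a strictly positive flow on a cleverly chosen alternating tree $T'$ all of whose arcs crossing the cut $(I,[n]\setminus I)$ are directed out of $I$.

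The first step is to extract structure from the hypothesis. Since $\xx$ is induced by a strictly positive flow on the alternating tree $T$, every coordinate of $\xx$ is nonzero: a source of $T$ has at least one out-arc carrying positive flow, so its coordinate is positive, and dually for sinks. Write $P=\{v:x_v>0\}$ and $N=\{v:x_v<0\}$, so that $P\sqcup N=[n]$. Any alternating tree on which $\xx$ is induced by a positive flow must have source set $P$ and sink set $N$; its arcs therefore all run from $P$ to $N$, and $\sigma_I(T')=+$ (that is, $|\ind(T'_I)|=0$) is exactly the requirement that no arc runs from $P\setminus I$ to $N\cap I$. So the task reduces to the following: find a spanning tree $F$ of the bipartite graph $B$ on $P\sqcup N$ whose arcs are all pairs $(p,t)$ with $p\in P$, $t\in N$, \emph{except} those with $p\notin I$ and $t\in I$, carrying a strictly positive flow that realizes the supplies $x_p$ ($p\in P$) and demands $-x_t$ ($t\in N$); directing such an $F$ from $P$ to $N$ gives the desired $T'$.

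Next I would check that the associated transportation problem on $B$ is solvable. The graph $B$ is connected: $\sigma_I(\xx)=+$ gives $\sum_{i\in I}x_i>0$, which forces both $P\cap I\neq\emptyset$ and $N\setminus I\neq\emptyset$; a vertex of the former is adjacent in $B$ to every sink and a vertex of the latter to every source, joining all of $[n]$ into one component. For feasibility, by the standard max-flow--min-cut (supply--demand) criterion it suffices to check, for every $S\subseteq P$, that $\sum_{p\in S}x_p$ is at most the sum of $-x_t$ over all sinks $t$ adjacent in $B$ to some vertex of $S$ (the totals agree since $\sum x_i=0$). If $S$ meets $P\cap I$ then those sinks are all of $N$ and the inequality is trivial; if $S\subseteq P\setminus I$ then those sinks are exactly $N\setminus I$, and using $\sum x_i=0$ the inequality reduces to $\sum_{i\in I}x_i\ge 0$, which holds. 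Hence a feasible flow $g$ on $B$ exists.

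It remains to arrange that the support of $g$ is connected, i.e.\ a spanning tree of $[n]$; this is the heart of the matter. I would choose $g$ among all feasible flows on $B$ so that its support $F$ has as few connected components as possible. Since all supplies and demands are nonzero, $F$ spans $[n]$, so if $F$ is connected it is the required tree. If $F$ had two or more components, I would use the connectivity of $B$ to pick a $B$-edge joining two of them and then rebalance by transferring flow around a cycle of $B$ through that edge; the existence of such a cycle with room to push is where I expect to need the hypothesis that $\xx$ is realizable by a positive flow on \emph{some} alternating tree --- equivalently, that the full transportation problem on the complete bipartite graph with parts $P,N$ is indecomposable --- which prevents the components of $F$ from each being independently balanced. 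This would yield a feasible flow with strictly fewer components, contradicting the choice of $g$. The main obstacle is precisely this last reduction; an alternative I would pursue in parallel is a direct induction on $|\ind(T_I)|$: remove one arc $e_0=(j_0,i_0)\in\ind(T_I)$, reconnect the two components of $T\setminus e_0$ by an allowed arc, and transfer flow along the fundamental cycle so created, the delicate point there being to keep every flow value nonnegative through the transfer.
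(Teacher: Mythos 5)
Your reduction is sound as far as it goes: a tree $T'$ with $\sigma_I(T')=+$ carrying a strictly positive flow inducing $\xx$ is the same thing as a spanning tree of the bipartite graph $B$ (sources $P$, sinks $N$, arcs from $P\setminus I$ to $N\cap I$ deleted) with a strictly positive flow meeting the supplies and demands, and your connectivity and Gale-type feasibility checks for $B$ are correct. But feasibility only produces a \emph{nonnegative} flow on $B$, and the entire content of the lemma lies in upgrading this to a flow that is strictly positive on every arc of a spanning tree; this step you leave unproven, and the sketch you give does not work as stated. Choosing $g$ with the fewest components in its support and observing that the support can be made connected does not give a tree: a connected support will in general contain cycles, and pruning a cycle by pushing flow around it can drive several arcs to zero simultaneously, disconnecting the support --- this is exactly the degenerate phenomenon (zero subset sums $\sum_{i\in S}x_i=0$, which your hypotheses do not exclude) that the lemma must confront. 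Your parenthetical claim that the hypothesis ``$\xx$ is induced by a positive flow on some alternating tree'' is equivalent to indecomposability of the full transportation problem on $P\sqcup N$ is also false: for instance $\xx=(3,5,-5,-3)$ is induced by the strictly positive flow $3,2,3$ on the tree with arcs $(1,3),(2,3),(2,4)$, yet $x_1+x_4=0$, so the full problem decomposes. So the heart of the proof is missing.

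For comparison, the paper proves the lemma by a hands-on exchange argument (Lemma \ref{lem:swapflow}): given a bad arc $(k,l)\in\ind(T_I)$, it adds one carefully chosen arc to $T$ so that the unique resulting cycle contains $(k,l)$, augments an integral flow around that cycle until some arc's flow vanishes, and shows the vanishing arc is \emph{unique}; this uniqueness, which is what keeps the new graph a spanning tree carrying a strictly positive flow, is exactly where the assumption $\sigma_S(\xx)\neq 0$ for all $S$ (in force where the lemma is applied, and invoked in Lemma \ref{lem:swapflow}) enters, and an induction on $|\ind(T_I)|$ and on $f(k,l)$ finishes. Your fallback suggestion (``remove one arc of $\ind(T_I)$, reconnect, transfer flow along the fundamental cycle'') is essentially this route, but the delicate points --- which arc to add so that the cycle passes through $(k,l)$, and why exactly one arc drops to zero --- are precisely what you have not supplied. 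Note also that if you assumed (or arranged, via the perturbation the paper alludes to) that $\xx$ lies in the interior of a resonance chamber, your transportation framing would finish very cleanly: any basic feasible solution of the nonempty flow polytope on $B$ has forest support whose components are zero-sum vertex sets, so nondegeneracy forces the support to be a spanning tree with all arcs positive. As written, however, you neither assume nor establish any such nondegeneracy, so the gap remains.
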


The proof of the lemma is a bit tedious, so we defer it to the next subsection. First, we present the proof of Theorem \ref{thm:maxclique}.

\begin{proof}[Proof of Theorem \ref{thm:maxclique}]
We will proceed by contradiction. Namely, suppose $\mathbf{T} = \{T_1, \ldots, T_k\}$ is a maximal clique in $\Gamma_n$ such that $\mathbf{T} \notin \In_n$, and yet for some $\ell < k$, $\mathbf{T}'=\{T_1,\ldots,T_{\ell}\} \in \In_n$. We consider $\ell$ maximial so that  $\{T_1,\ldots,T_{\ell}, T_{\ell+1}\} \notin \In_n$.

We will show there is a tree $T' \notin \mathbf{T}'$ such that $\C(T') \cap \C(T_1) \cap \cdots \cap \C(T_{\ell})$ is full-dimensional, contradicting the assertion that $\mathbf{T}'$ is a maximal indexable collection.

For each $j=1,\ldots,k$, we denote the intersection of the first $j$ root cones by
\[
 \C_j = \C(T_1) \cap \cdots \cap \C(T_j).
\] 
 
Since we are assuming that $\C_{\ell+1}$ is \emph{not} full-dimensional but $\C_{\ell}$ is,  there is a chamber $R$ of the resonance arrangement contained in $\C_{\ell}$, $R\subseteq \C_{\ell}$, but such that $R\not\subseteq \C_{\ell+1}$. Let $\xx$ be in the interior of $R$ so that the resonance sign vector $\sigma_S(\xx) \neq 0$ for all $\emptyset \subsetneq S \subseteq [n-1].$ 
 If,  for all $\emptyset \subsetneq S \subseteq [n-1],$ $\sigma_S(T_{\ell+1})\in \{\sigma_S(\xx), ?\},$ we would have that $R\subseteq \C(T_{\ell+1}).$  
Hence there must exist some $I\subseteq [n-1]$ such that $\sigma_I(T_{\ell+1})\neq \sigma_I(\xx)$ and $\sigma_I(T_{\ell+1}) \in \{+, -\}.$    Without loss of generality, assume that $\sigma_I(T_{\ell+1})=-$ so that $\sigma_I(\xx)=+.$  Then $\sigma_I(T_i)=?$ for $1\leq i\leq \ell$: For any such $i$, $R\subseteq \C_{\ell}$ implies $R\subseteq \C(T_i).$ Hence $\sigma_I(T_i) \in \{+, ?\}.$  But if $\sigma_I(T_i)=+,$ then $T_i$ and $T_{\ell+1}$ would not be sign compatible.

We will now construct a tree $T'$ with $T'\notin \mathbf{T}$ such that $\sigma_I(T') = +$ and $R\cap \C(T')$ is full-dimensional. The existence of this tree will complete our proof, since it will contradict the assertion that $\mathbf{T}'$ was a maximal indexable set.

Since $R \subseteq \C_{\ell}$, in particular $R\subseteq \C(T_1)$, and there is a point $\xx \in R$ induced by a positive flow on $T_1$. Since $\sigma_I(R) = +$, we know $\sigma_I(\xx)=+$. Since $\sigma_I(T_1) = ?$, there must be arcs of $T_1$ going both into and out of $I$, i.e., $\ind((T_1)_I)\neq \emptyset$. However, by Lemma \ref{lem:?to+}, we can modify $T_1$ to create a new tree $T'$ that also induces $\xx$ with a positive flow, such that $\ind(T'_I) = \emptyset$. This tree $T'$ satisfies our desired conditions: $R\subseteq \C(T')$ and $T' \notin \mathbf{T}$, thus completing the proof. 
\end{proof}

\subsection{Proof of Lemma \ref{lem:?to+}}

We will break the proof down into an even smaller technical lemma.

\begin{lem}\label{lem:swapflow}
Let $T$ be an alternating tree on $[n]$ and suppose $\xx$ is induced by a positive integer flow $f: E(T)\to \Z_{>0}$ on $T$ and that $\sigma_S(\xx)\neq 0$ for any $\emptyset \neq S \subset [n-1]$, i.e., $\xx$ is in the interior of a resonance chamber. Further, suppose $I$ is such that $\sigma_I(\xx)=+$ and let $(k,l)$ be an arc with $(k,l) \in \ind(T_I)$. Then there exists an alternating tree $T'$ on $[n]$ with $\ind(T'_I) \subseteq \ind(T_I)$ and with a positive integer flow $f': E(T') \to \Z_{>0}$ that also induces $\xx$, such that either:
\begin{itemize}
\item $(k,l) \notin E(T')$ or 
\item $f'(k,l) < f(k,l)$.
\end{itemize}
\end{lem}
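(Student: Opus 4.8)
The plan is to look at the unique path in the tree $T$ that starts along the arc $(k,l)\in\ind(T_I)$ (so $l\in I$, $k\notin I$) and to follow the flow forward, repeatedly crossing the boundary of $I$, until we can ``short‑circuit'' a piece of the path. Since $\sigma_I(\xx)=+$, we have $\sum_{i\in I}x_i=\sum_{(i,j)\in\outd(T_I)}f(i,j)-\sum_{(j,i)\in\ind(T_I)}f(j,i)>0$, so there is at least one arc $(p,q)\in\outd(T_I)$, and in fact the total outflow strictly exceeds the total inflow across $\partial I$. The idea is to trace a walk in $T$ from $l$ that alternately uses arcs of $T$ directed consistently with the flow, forcing it to leave $I$ along some $(p,q)\in\outd(T_I)$; because $T$ is a tree this walk is a genuine (non‑self‑intersecting) path $P$ from $l$ to $q$ (or we can choose the endpoints so that $P$ is the tree‑path between the tail $k$ and a vertex outside $I$ reached after one return). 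Along $P$ we alternately have sources and sinks of $T$, so $P$ has even length, and the arcs of $P$ form two perfect matchings on the vertex set of $P$: the ``odd'' arcs (which include $(k,l)$) and the ``even'' arcs (which include the crossing arc in $\outd(T_I)$). Replacing the odd matching by the even matching on these vertices — i.e. deleting the odd arcs of $P$ from $T$ and adding the even arcs of $P'$ where $P'$ is the path with all arcs of $P$ reversed in orientation but... — more precisely, we build $T'$ by toggling which of the two matchings along $P$ is used, and we push $\delta=\min$ of the flow values on the odd arcs of $P$ through this toggle.

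Concretely, let $\delta$ be the minimum of $f$ over the odd arcs of $P$ (the ones we want to decrease, among them $(k,l)$). Define $f'$ on $T$‑arcs not on $P$ to agree with $f$; on odd arcs of $P$ set $f'=f-\delta$; on even arcs of $P$ set $f'=f+\delta$. A local check at each internal vertex $v$ of $P$ — using that $v$ is a pure source or pure sink of $T$ and that $P$ enters and leaves $v$ once — shows that the induced point is unchanged: at a source both incident $P$‑arcs are outgoing and one goes up by $\delta$, one down by $\delta$; at a sink symmetrically. At the endpoints of $P$ we choose the path so that the endpoints lie outside $I$ (the tail $k$ on one end, the vertex $q\notin I$ reached after leaving $I$ on the other), and there the single incident $P$‑arc is an odd arc being decreased; one checks the endpoints' coordinates are also preserved because the path is chosen to start and end at ``matched'' half‑edges. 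Now on the odd arc $(k,l)$ we have $f'(k,l)=f(k,l)-\delta$; if $\delta=f(k,l)$ then $(k,l)\notin E(T')$, otherwise $f'(k,l)<f(k,l)$. One must also verify $T'$ is again a tree (toggling a matching along a path in a tree is an elementary local move preserving acyclicity and the edge count $n-1$), that $T'$ is alternating (each toggled vertex keeps its source/sink type since we only exchanged which incident $P$‑arcs are present, never reversed an orientation), that $f'>0$ (guaranteed for the decreased arcs that survive, since if one hit zero it was removed; the increased arcs stay positive), that $f'$ is integral (all operations are integral), and finally that $\ind(T'_I)\subseteq\ind(T_I)$: the only $\partial I$‑arc we remove is $(k,l)\in\ind(T_I)$ (when $\delta=f(k,l)$), and the arc we add near the other end, $(p,q)\in\outd(T_I)$, is already in $T$, so no new in‑arc of $I$ is created. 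That last containment is the crucial bookkeeping point, and it forces the careful choice of $P$ as ``first crossing back out of $I$''.

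The main obstacle I anticipate is pinning down the path $P$ so that (i) it genuinely exists — i.e. following the flow forward from $l$ inside $I$ really does reach an $\outd(T_I)$ arc, which is where $\sigma_I(\xx)=+$ (positive net outflow) is used, together with positivity of $f$ and acyclicity of $T$ — and (ii) its two endpoints behave correctly under the matching toggle so that the induced point is exactly preserved and no extra in‑arcs of $I$ appear. Handling the endpoints will likely require treating $l$ (a vertex of $I$) and the exit vertex separately and checking the coordinate bookkeeping there by hand; the interior of $P$ is a clean alternating‑source/sink telescoping argument. Iterating this lemma — each application either deletes an arc of $\ind(T_I)$ or strictly decreases an integer flow value on one — gives a terminating procedure, and the tree reached when $\ind(T_I)=\emptyset$ is the $T'$ of Lemma \ref{lem:?to+}.
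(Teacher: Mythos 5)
There is a genuine gap at the heart of your construction: you try to preserve the induced point $\xx$ by pushing $\delta$ along a path $P$ inside $T$ (decrease on the odd arcs, increase on the even arcs) without adding any new arc. This cannot work. For a fixed tree the flow inducing a given point is unique (the incidence matrix of a tree has linearly independent columns), so any nontrivial modification of $f$ supported on arcs of $T$ changes the induced point; concretely, each endpoint of $P$ meets only one $P$-arc, so its coordinate shifts by $\pm\delta$, and no choice of ``matched half-edges'' repairs this --- the telescoping cancellation happens only at interior vertices of $P$. Your fallback of ``toggling which matching along $P$ is used'' is not well defined either: the even arcs of $P$ already belong to $T$, so there is nothing to add, and deleting the odd arcs ruins the edge count and connectivity. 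The missing idea is precisely the paper's: adjoin one \emph{new} arc to $T$ --- either $(k,j)$ or $(i,l)$, where $(i,j)\in\outd(T_I)$ is any out-arc (it exists because the net flow out of $I$ is positive), the choice depending on whether the tree path from $k$ to $j$ uses $(k,l)$, uses $(i,j)$, or neither --- so as to close a cycle through $(k,l)$. Since the added arc joins two vertices outside $I$ or two vertices inside $I$, no new arc into $I$ is created; since it goes from a source to a sink, the graph stays alternating; and pushing $f^*$ (the minimum flow on the odd arcs of the cycle) around the cycle is a genuine circulation, so it preserves $\xx$ at every vertex, including the vertices you were treating as endpoints.

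A second gap: you never use the hypothesis that $\sigma_S(\xx)\neq 0$ for all $S$, and without it the procedure need not return a tree. After the cycle augmentation one deletes an arc whose flow became zero, and one needs that arc to be \emph{unique}: if two arcs dropped to zero, the support of the new flow would split into two components, and Lemma \ref{lem:signflow} would then force $\sigma_S(\xx)=0$ for the vertex set $S$ of one component, contradicting the genericity of $\xx$. This uniqueness is what guarantees that $T'$ is connected with $n-1$ arcs, i.e., an alternating tree carrying a strictly positive integer flow, with either $(k,l)$ deleted or $f'(k,l)=f(k,l)-f^*<f(k,l)$. Finally, your ``follow the flow out of $I$'' path is neither clearly available (directed paths in an alternating tree have length one, and the undirected tree path from $l$ need not meet $\outd(T_I)$ where you want it to) nor needed: the argument only requires one arc of $\outd(T_I)$ together with the tree path between $k$ and $j$ to decide which closing arc to add.
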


\begin{proof}
As in the statement of the lemma, let $T$ be an alternating tree with positive integral flow $f$. Let $\xx=\xx(T;f)$ with $\sigma_S(\xx)\neq 0$ for any $S$. Further suppose $I$ is such that $\sigma_I(\xx)=+$ and let $(k,l) \in \ind(T_I)$.

Since $\sigma_I(\xx)=+$, the net flow out of $I$ is positive:
\[
 \sum_{(i,j)\in \outd(T_I)} f(i,j) - \sum_{(k,l) \in \ind(T_I)} f(k,l) = \sum_{i \in I} x_i > 0.
\]
In particular, since $f$ is a positive flow, $|\outd(T_I)| \neq 0$. Let $(i,j)\in \outd(T_I)$. Note that vertices $i, j, k,$ and $l$ must be distinct since $T$ is alternating: $i \in I$ and $k \notin I$ are sources, while $j \notin I$ and $l \in I$ are sinks.

We will create $T'$ in two stages. We first add an edge to $T$, creating a graph $T^+$ that contains a cycle. We will then augment the flows within the cycle, then delete an edge from $T^+$ to produce $T'$. The details of how we do this depends mildly on three cases. Let $P$ denote the unique undirected path from $k$ to $j$ in $T$, and let $T^+$ denote the alternating graph with arcs $E(T)\cup \{e\}$, where $e$ is the arc determined below.
\begin{enumerate}
\item if $P$ contains  $(k,l)$, then $e=(k,j)$,
\item if $P$ contains $(i,j)$ but not $(k,l)$, then $e=(i,l)$,
\item if $P$ contains neither $(i,j)$ nor $(k,l)$, then $e=(i,l)$.
\end{enumerate}
See Figure \ref{fig:lemmacases} for an illustration of each of these three cases for $P$. Notice the important feature that we can always form a cycle containing the edge $(k,l)$. Moreover, the graph $T^+$ is still alternating and the cycle is therefore even with at least four arcs.

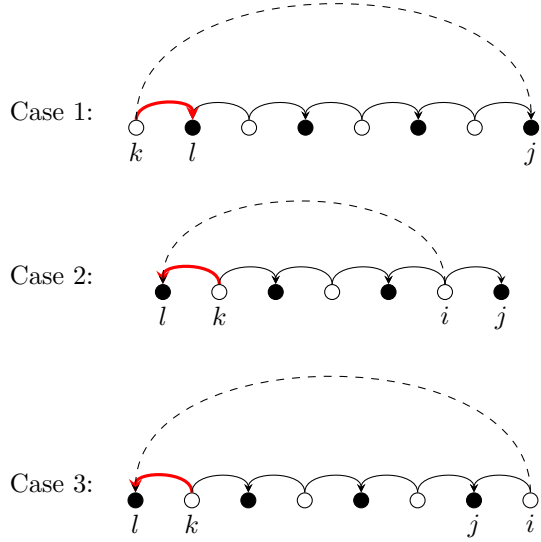
\begin{figure}
\[
\begin{array}{r c}
\mbox{Case 1:}
&
\begin{tikzpicture}[baseline=0,>=stealth,scale=.75]
\foreach \x in {1,...,8}{
\coordinate (\x) at (\x,0);
}
\foreach \x in {1,3,5,7}{
 \draw (\x) node[below,fill=white,draw=black,circle,inner sep =2] {};
}
\foreach \x in {2,4,6,8}{
 \draw (\x) node[below,fill=black,draw=black,circle,inner sep =2] {};
}
\draw[->] (3) to [in=90,out=90]  (2);
\draw[red,very thick,->] (1) to [in=90,out=90]  (2);
\draw[->] (3) to [in=90,out=90]  (4);
\draw[->] (5) to [in=90,out=90]  (4);
\draw[->] (5) to [in=90,out=90]  (6);
\draw[->] (7) to [in=90,out=90]  (6);
\draw[->] (7) to [in=90,out=90]  (8);
\draw[dashed,->] (1) to [in=90,out=90]  (8);
\draw (1) node[below, yshift=-5pt] {$k$};
\draw (2) node[below, yshift=-5pt] {$l$};
\draw (8) node[below, yshift=-5pt] {$j$};
\end{tikzpicture}
\\
\mbox{Case 2:}
&
\begin{tikzpicture}[baseline=0,>=stealth,scale=.75]
\foreach \x in {1,...,8}{
\coordinate (\x) at (\x,0);
}
\foreach \x in {2,4,6}{
 \draw (\x) node[below,fill=white,draw=black,circle,inner sep =2] {};
}
\foreach \x in {1,3,5,7}{
 \draw (\x) node[below,fill=black,draw=black,circle,inner sep =2] {};
}
\draw[red,very thick,->] (2) to [in=90,out=90]  (1);
\draw[->] (2) to [in=90,out=90]  (3);
\draw[->] (4) to [in=90,out=90]  (3);
\draw[->] (4) to [in=90,out=90]  (5);
\draw[->] (6) to [in=90,out=90]  (5);
\draw[->] (6) to [in=90,out=90]  (7);
\draw[dashed,->] (6) to [in=90,out=90]  (1);
\draw (2) node[below, yshift=-5pt] {$k$};
\draw (1) node[below, yshift=-5pt] {$l$};
\draw (6) node[below, yshift=-5pt] {$i$};
\draw (7) node[below, yshift=-5pt] {$j$};
\end{tikzpicture}
\\
\mbox{Case 3:}
&
\begin{tikzpicture}[baseline=0,>=stealth,scale=.75]
\foreach \x in {1,...,8}{
\coordinate (\x) at (\x,0);
}
\foreach \x in {2,4,6,8}{
 \draw (\x) node[below,fill=white,draw=black,circle,inner sep =2] {};
}
\foreach \x in {1,3,5,7}{
 \draw (\x) node[below,fill=black,draw=black,circle,inner sep =2] {};
}
\draw[red,very thick,->] (2) to [in=90,out=90]  (1);
\draw[->] (2) to [in=90,out=90]  (3);
\draw[->] (4) to [in=90,out=90]  (3);
\draw[->] (4) to [in=90,out=90]  (5);
\draw[->] (6) to [in=90,out=90]  (5);
\draw[->] (6) to [in=90,out=90]  (7);
\draw[->] (8) to [in=90,out=90]  (7);
\draw[dashed,->] (8) to [in=90,out=90]  (1);
\draw (2) node[below, yshift=-5pt] {$k$};
\draw (1) node[below, yshift=-5pt] {$l$};
\draw (8) node[below, yshift=-5pt] {$i$};
\draw (7) node[below, yshift=-5pt] {$j$};
\end{tikzpicture}
\end{array}
\]
\caption{The three types of paths $P$ from $k$ to $j$ in $T$, and how they are augmented in $T^+$ with a new arc. The new arc is drawn with a dashed line.}\label{fig:lemmacases}
\end{figure}

Let us denote the cycle in $T^+$ by
\[
 C = (e_1, e_2, \ldots, e_{2r})
\]
with $e_1 = (k,l)$. Note that the new arc $e$ we added is either $e_{2r}=(k,j)$ (Case 1) or it is $e_2 = (i,l)$ (Cases 2 and 3).

Let $f^*=\min\{f(e_1), f(e_3),\ldots, f(e_{2k-1})\}$. Note that $f^* \in \mathbb{Z}_{>0}$ since it was part of the original positive integral flow on $T$. 

The flow $f$ can be viewed as a flow on the arcs of $T^+$, $E(T^+) = E(T)\cup\{e\}$, with $f(e)=0$. We now create a new flow on $T^+$ by subtracting $f^*$ from all the odd-indexed arcs in $C$---this includes our special arc $e_1=(k,l)$---and adding $f^*$ to all the even-indexed arcs, i.e., let $f'$ be the flow given by
\[
 f'(a,b) = \begin{cases} 
    f(a,b) & \mbox{ if $(a,b) \notin C$},\\
    f(a,b)-f^* & \mbox{ if $(a,b) = e_{2s-1} \in C$ for some $1\leq s \leq r$},\\
    f(a,b)+f^* & \mbox{ if $(a,b) = e_{2s} \in C$ for some $1\leq s \leq r$}.
    \end{cases}
\]
This operation leaves the net flow at each vertex unchanged, and so induces the same point: $\xx(T^+;f') = \xx(T;f)$.

Moreover, we can see that one of the arcs of $C$ has to have flow zero, i.e., $f'(e_{2s-1})=0$ for some $s$. In fact, this arc is unique, for otherwise the nonzero parts of $f'$ would split into positive flows on two disjoint components, say $T^+_S$ and $T^+_{[n]-S}$. But then by Lemma \ref{lem:signflow}, $\sigma_S(\xx) = 0$, which contradicts our assumption that $\sigma_S(\xx) \neq 0$ for any $S$.

Now that we know the arc with flow zero is unique, we delete that arc, $e_{2s-1}$, from $T^+$ to obtain $T'$. Since $T^+$ was alternating, connected, and had exactly one cycle, and we removed one arc from that cycle, we know the graph $T'$ is indeed an alternating tree. Further, it satisfies all our desired properties: $\xx(T';f') = \xx$, and either $e_1=(k,l)$ was deleted, or $f'(k,l)=f(k,l)-f^* < f(k,l)$.
\end{proof}

We can now prove Lemma \ref{lem:?to+} by repeated application of Lemma \ref{lem:swapflow} and induction on $|\ind(T_I)|$. If $|\ind(T_I)| =0$, then $T=T'$ and we are done. Otherwise, suppose $\xx$ is induced by a positive integer flow on $T$. (By taking a nearby rational point and rescaling, it is safe make this assumption.) We then pick an arc $(k,l) \in \ind(T_I)$ and apply the lemma to produce a tree $T'$ with either $(k,l)\notin \ind(T'_I)$ or with $0<f'(k,l) < f(k,l)$. In the former case, we know $\ind(T'_I)\subsetneq \ind(T_I)$ so we are done by induction. 

In the latter case, we apply Lemma \ref{lem:swapflow} to $T'$ and the arc $(k,l)$ again, to produce a tree $T''$ with positive integer flow $f''$ such that (again) either $(k,l) \notin \ind(T''_I)$ or $0<f''(k,l)<f'(k,l)$. In at most $f(k,l)$ iterations, then, we will produce a tree with a positive integer flow and without arc $(k,l)$. 

By repeating the argument for any remaining arcs into $I$, Lemma \ref{lem:?to+} now follows.

\subsection{Symmetries of the compatibility graph and enumeration}

Since permutation of coordinates preserves adjacency of chambers, we can see some symmetries of sign vectors which carry over to the graph $\Gamma_{n+1}$. For example, consider the fact that if $T$ and $T'$ are sign compatible, then in particular they must have the same sources and the same sinks. Suppose they have $k$ sources among the nodes $\{1,2,\ldots,n\}$ (we ignore vertex $n+1$). Then by permuting labels/coordinates in $[n]$, there are sign compatible trees $\pi(T), \pi(T')$ with sources $\{1,2,\ldots,k\}$. Of course this reasoning applies to entire indexable sets, not just pairs of trees, and this narrows the focus of our counting problem.

Let us denote by $\In_{n+1}^I$ the set of maximal indexable collections whose trees $T$ have $\{ i \in [n] : i \mbox{ is a source vertex in } T \} = I$. In the sign vector for any such tree, we see $\sigma_J(T) = +$ for each $J\subseteq I$ and $\sigma_S(T)=-$ for $S \subseteq [n]-I$. Notice that for each $I$, the trees appearing in the collections for $\In_{n+1}^I$ either have $|I|$ or $|I|+1$ positive coordinates, depending on whether vertex $n+1$ is a source or sink. When $I$ is empty, there is precisely one alternating tree, with arcs from $n+1$ to every other vertex. Upon reversing arcs, we find there is but one tree with sources for all $I=[n]$. This symmetry of swapping sources and sinks (geometrically, multiplication by $-1$) extends to all other cases, so we find $\In_{n+1}^I$ and $\In_{n+1}^{[n]-I}$ are in bijection.

In the special case of $I=\{1,2,\ldots,k\}$, we write $\In_{n+1}^k = \In_{n+1}^{[k]}$ for short. The permutation of coordinates $x_1,\ldots,x_n$ mentioned earlier means the sets $\In_{n+1}^I$ and $\In_{n+1}^k$ are in bijection if $|I|=k$, with $\In_{n+1}^{\emptyset}=\In_{n+1}^0$. Thus we have
\[
 \In_{n+1} = \bigcup_{I \subseteq [n]} \In_{n+1}^I,
\]
and
\[
 R_n = |\In_{n+1}| = \sum_{k=0}^n \binom{n}{k} |\In_{n+1}^k|.
\]
The small values of $|\In_{n+1}^k|$ are given in Table \ref{tab:Ink}, where we witness the symmetry given by $\In_{n+1}^I \leftrightarrow \In_{n+1}^{[n]-I}$. For example, when $n=4$,
\[
 R_4 = 1 + 4\cdot 19 + 6\cdot 36 + 4\cdot 19 + 1 =370
\]

\begin{table}
\[
\begin{array}{c|ccccccc}
 n\backslash k & 0 & 1 & 2 & 3 & 4 & 5\\
 \hline
 0 & 1 \\
 1 & 1 & 1\\
 2 & 1 & 2 & 1 \\
 3 & 1 & 5 & 5 & 1 \\
 4 & 1 & 19 & 36 & 19 & 1 \\
 5 & 1 & 149 & 490 & 490 & 149 & 1 \\
 \end{array}
\]
\caption{The triangle of numbers $|\In_{n+1}^k|$.}\label{tab:Ink} 
\end{table}

This partitioning of the counting problem extends to the compatibility graph. Let us denote by $\Gamma_{n+1}^I$ the subgraph of $\Gamma_{n+1}$ consisting of all the alternating trees whose source set among the vertices $\{1,2,\ldots,n\}$ is $I$, and let $\Gamma_{n+1}^k = \Gamma_{n+1}^{[k]}$ for short, with $\Gamma_{n+1}^0 = \Gamma_{n+1}^{\emptyset}$. Then the graphs $\Gamma_{n+1}^I$ and $\Gamma_{n+1}^k$ are isomorphic, when $|I|=k$. For each $k=0,1,\ldots,n$, then, there are $\binom{n}{k}$ components of $\Gamma_{n+1}$ that are isomorphic to $\Gamma_{n+1}^k$.

We see that we can reconstruct all of $\Gamma_{n+1}$ from the disjoint components $\Gamma_{n+1}^k$, $k=1,\ldots,\lfloor n/2 \rfloor$. Let 
\[
h_k = |\{ \mbox{ maximal cliques in $\Gamma_n^k$ }\}|.
\]
Then $h_k=h_{n-k}$ and 
\[
 |\In_{n+1}^k| \leq h_k.
\]

\begin{exam}
Consider $\Gamma_5$ shown in Figure \ref{fig:W8} and how it relates to the chambers of $\Res_4$. The subgraph $\Gamma_5^0$ is an isolated node, isomorphic to $\Gamma_5^4$. It consists of the unique alternating tree in which vertex 5 is the only source. The graph $\Gamma_5^1$, isomorphic to $\Gamma_5^3$, has two connected components: an isolated node for the tree that has vertex 1 as its only source, and a connected component of 12 trees with source 1 and source 5. Finally, $\Gamma_5^2$ has two connected components: a component of 12 trees with sources $\{1,2\}$, and another component of 12 trees with sources $\{1,2,5\}$.

To build an isomorphic copy of the full graph $\Gamma_5$, we take:
\begin{itemize}
\item $2$ copies of $\Gamma_5^0$,
\item $2\cdot \binom{4}{1}$ copies of $\Gamma_5^1$, and
\item $2\cdot \binom{4}{2}$ copies of $\Gamma_5^2$.
\end{itemize}
Thus, we end up with $2+8=10$ isolated nodes and $8+12=20$ connected components with $12$ trees.
\end{exam}

Our symmetry so far focused on permutation of the coordinates $x_1,\ldots,x_n$ since these amount to symmetries of sign vectors. However, we can do a similar partition of $\Gamma_n$ by considering full permutations of $x_1,\ldots,x_{n+1}$ as well. To illustrate this idea, we return to $\Gamma_5$ and observe that there are $\binom{5}{1}+\binom{5}{4} = 10$ isolated nodes (corresponding to choosing either 1 or 4 nodes to be sources) and there are $\binom{5}{2}+\binom{5}{3}=20$ isomorphic components containing 12 trees each.

\section{Chambers of polynomiality for the Kostant partition function}\label{sec:Kostant}

We now turn our attention to the connection between chambers of the resonance arrangement and the chambers of polynomiality for the Kostant partition function. Let us first provide some background.

The \emph{Kostant partition function} (for the root system $A_n$) is a counting function $\kp_n : \R^{n+1}\to \Z_{\geq 0}$. For a given point $\v \in \R^{n+1}$, we have
\begin{equation} \label{kost} 
\kp_n(\v)= \left|\left\{ \xx \in \Z_{\geq 0}^{\binom{n+1}{2}} :  M(\kg^+_{n+1}) \xx=\v \right\}\right|,
\end{equation}
where $M(\kg_{n+1}^+)$ is the $(n+1)\times \binom{n+1}{2}$ incidence matrix of the complete graph $\kg^+_{n+1}$ with edges oriented from smaller to larger vertices. Thus the columns of $M(\kg_{n+1}^+)$ are precisely the positive roots $e_i-e_j$ with $1\leq i<j\leq n+1$, and the Kostant partition functions counts how many nonnegative integer flows on $\kg_{n+1}^+$ induce the point $\v$.
 
To put it another way, $\kp_n(\v)$ is the number of lattice points in the \emph{flow polytope}  $\F(\kg_{n+1}^+;\v)$ associated with the complete graph $\kg_{n+1}^+$ and netflow vector $\v$:
\[
\F(\kg_{n+1}^+;\v)= \left\{ \xx \in \R_{\geq 0}^{\binom{n+1}{2}} :  M(\kg_{n+1}^+) \xx= \v \right\}.
\] 
Kostant partition functions have a rich interplay with flow polytopes in algebraic combinatorics and combinatorial optimization as has been explored in, e.g., \cite{m-prod, BV2, bdlv, deloerasturm, Schrijver, tesler, mm,  AK}. The main connection with the resonance arrangement comes from the following result about the Kostant partition function.

\begin{thm}(\cite{deloerasturm}) \label{thm:deLS}
The Kostant partition function $\kp_n$ is a piecewise polynomial function of degree $\binom{n+1}{2}-n$. Its domains of polynomiality are the maximally refined chambers obtained by intersections of positive root cones.
\end{thm}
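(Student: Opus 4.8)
The plan is to derive this from the general theory of vector partition functions together with Postnikov's classification of central triangulations. Recall from \eqref{kost} that $\kp_n(\v) = \#\{\xx \in \Z_{\geq 0}^{\binom{n+1}{2}} : M(\kg_{n+1}^+)\xx = \v\}$, the number of lattice points of the flow polytope $\F(\kg_{n+1}^+;\v)$. By Sturmfels's foundational work \cite{sturm} on vector partition functions, for any integer matrix $A$ the function $\v \mapsto \#\{\xx \geq 0 \text{ integral}: A\xx = \v\}$ vanishes off $\mathrm{pos}(A)$ and, restricted to $\mathrm{pos}(A)$, is piecewise quasi-polynomial with domains of quasi-polynomiality equal to the full-dimensional cells of the \emph{chamber complex} of $A$ --- the common refinement of all triangulations of $\mathrm{pos}(A)$ whose rays are columns of $A$ (equivalently, the subdivision cut out by all hyperplanes spanned by $n-1$ columns). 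Thus the theorem reduces to three tasks: identify this chamber complex for $A = M(\kg_{n+1}^+)$, upgrade quasi-polynomial to polynomial, and compute the degree.

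First I would identify the chamber complex. The columns of $M(\kg_{n+1}^+)$ are the positive roots $e_i - e_j$, so $\mathrm{pos}(A) = \C(\kg_{n+1}^+)$, a pointed cone over the positive root polytope $\P(\kg_{n+1}^+)$. Any triangulation of this cone using only the roots as rays is, after slicing by the affine span of $\P(\kg_{n+1}^+)$, a central triangulation of $\P(\kg_{n+1}^+)$, since each maximal cell is a simplicial cone with apex $0$. By Lemma \ref{lem:tri}, the top-dimensional cells of such a triangulation are exactly the $\Delta_T = \P(T)$ for positive alternating trees $T$; coning these up recovers the positive root cones $\C(T)$. Therefore the chamber complex of $A$ coincides with the common refinement of the positive root cones $\{\C(T): T \text{ a positive alternating tree}\}$, i.e.\ with the maximally refined chambers obtained from intersections of positive root cones, which is precisely $\K_n$.

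Next, for the degree: $M(\kg_{n+1}^+)$ is the incidence matrix of the connected graph $\kg_{n+1}^+$ on $n+1$ vertices, hence has rank $n$ and $\binom{n+1}{2}$ columns, so for generic $\v$ the flow polytope $\F(\kg_{n+1}^+;\v)$ has dimension $\binom{n+1}{2}-n$, and the standard theory gives that the polynomial on a full-dimensional chamber has exactly this degree. For genuine polynomiality (trivial quasi-period), I would invoke the fact that a digraph incidence matrix is totally unimodular, so every maximal minor is $0$ or $\pm 1$; Sturmfels's unimodularity criterion \cite{sturm} then forces each quasi-polynomial piece to be an honest polynomial.

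The hard part will be the identification in the second paragraph: one must be sure that the common refinement of \emph{all} root-spanned triangulations of $\C(\kg_{n+1}^+)$ really is the common refinement of the cones $\C(T)$ over alternating trees --- neither more nor less. This is exactly where Postnikov's combinatorics (Lemmas \ref{lem:sim} and \ref{lem:tri}) does the work: every $(n-1)$-edge subforest $F$ is a facet of some $\Delta_T$, so every wall of the chamber complex lies on a wall of some $\C(T)$, while conversely each $\C(T)$ is a union of chambers because it is a cell of an actual triangulation; hence the two refinements agree. A minor point to check along the way is that restricting attention to central triangulations loses nothing, which is immediate since triangulating a pointed cone by its extreme rays automatically places the apex in every cell.
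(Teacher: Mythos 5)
First, a point of comparison: the paper does not prove Theorem \ref{thm:deLS} at all --- it is quoted from de Loera--Sturmfels --- and your overall strategy (Sturmfels's chamber-complex theorem for vector partition functions, total unimodularity of the incidence matrix to upgrade quasi-polynomials to polynomials, and degree $=$ number of columns minus rank $= \binom{n+1}{2}-n$) is exactly the standard route behind the cited result; those parts of your sketch are fine. The problem is the step you yourself flag as the hard part: identifying the chamber complex of $M(\kg_{n+1}^+)$ with the common refinement of the cones $\C(T)$ over \emph{positive alternating} trees. Your argument for it does not work. The claim ``every $(n-1)$-edge subforest $F$ is a facet of some $\Delta_T$'' is false: the facets through the origin of $\Delta_T$ for an alternating tree $T$ are cones over \emph{alternating} two-component forests, whereas a generic positively directed $(n-1)$-edge forest (say the path $1\to 2\to 3$ together with isolated vertices) is not alternating, and its cone has the wrong extreme rays to be a facet of any alternating-tree simplex. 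So ``every wall of the chamber complex lies on a wall of some $\C(T)$'' is not established. Worse, your parenthetical ``equivalently, the subdivision cut out by all hyperplanes spanned by $n-1$ columns'' is not an equivalent description of the chamber complex: that subdivision is the resonance arrangement restricted to the positive root cone, which is strictly \emph{finer} than the Kostant chamber complex (already in rank $3$ it has $8$ cells versus $K_3=7$; this is precisely the content of Observation \ref{obs:Kn} and the difference between Figures \ref{fig:labeledrank3} and \ref{fig:labeledrank3+}). If your identification were run with that description, the theorem's chambers would come out wrong.

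The ``minor point'' you dismiss is also where a real issue hides. In the vector-configuration sense, a triangulation of $\mathrm{pos}(A)$ need not use all the roots as rays: the single simplicial cone on the simple roots --- i.e.\ the cone of the non-alternating path $1\to 2\to\cdots\to(n+1)$ --- is itself a triangulation, so Lemma \ref{lem:tri} does not apply to all triangulations entering the chamber-complex definition, and the apex-containment remark does not address this. A correct argument would go: (i) the chamber complex is the common refinement of all basis cones, equivalently of all triangulations of the configuration; (ii) every triangulation can be refined to a \emph{full} one (using every root), so the common refinement over all triangulations equals that over full triangulations, which correspond to central triangulations of $\P(\kg_{n+1}^+)$ since every positive root is a vertex of that polytope; (iii) only now does Lemma \ref{lem:tri} give that every wall of the chamber complex lies on a facet of some positive alternating tree cone, so the common refinement of the cones $\C(T)$ refines the chamber complex; (iv) the converse direction is immediate and needs no triangulation at all, since each alternating tree is in particular a spanning tree, so each $\C(T)$ is a basis cone and hence a union of chambers. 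With (i)--(iv) in place of your second and fourth paragraphs, the identification --- and hence the theorem --- follows as you intend.
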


Compare this result with Proposition \ref{lem:chambersandroots} which states that resonance chambers are intersections of all (not necessarily positive) root cones. We can immediately infer a great deal about these chambers by restricting our study of alternating trees to the study of positive alternating trees, where we recall a \emph{positive} alternating tree has all arcs of the form $(i,j)$ with $i<j$. Recall also that the set of \emph{Kostant chambers} is denoted $\K_n$, and that the number of such chambers is $K_n = |\K_n|$. 

We adapt the notation and terminology of Section \ref{sec:resonance} as follows:
\begin{itemize}
\item A \textbf{positive indexable collection} $\textbf{T}=\{T_1,\ldots,T_k\}$ is an indexable collection of positive alternating trees.
\item We let $\In_n^+$ denote the set of maximal (under inclusion) positive indexable collections.
\item The \textbf{positive compatibility graph} $\Gamma_n^+$ is the subgraph of $\Gamma_n$ obtained by restricting the vertex set to positive alternating trees on $[n]$.
\end{itemize}
We have the following results from Section \ref{sec:resonance} mirrored for positive trees. 

\begin{cor}[Compare with Corollary \ref{cor:indexableR}]\label{cor:indexableR+}
The number of maximal indexable collections of positive trees on $[n+1]$ equals the number of chambers of polynomiality for the Kostant partition function, i.e.,
\[
 K_n = |\In^+_{n+1}|.
\]
\end{cor}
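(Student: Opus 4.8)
The plan is to mirror, almost verbatim, the chain of reasoning that produced Corollary \ref{cor:indexableR}, replacing the resonance arrangement by the positive root cone $\C(\kg_{n+1}^+)$ and arbitrary alternating trees by positive alternating trees. The one input that changes is that the role played by Proposition \ref{lem:chambersandroots} is now taken over by Theorem \ref{thm:deLS}: it already asserts that the Kostant chambers $\K_n$ are exactly the maximally refined chambers obtained by intersecting positive root cones, so no separate ``walls $=$ hyperplanes'' argument is needed.

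First I would record that the positive root cones in question are precisely the cones $\C(T)$ over positive alternating trees $T$ on $[n+1]$: by Lemma \ref{lem:sim} a positive root cone is the cone over a full-dimensional simplex $\Delta_F$ with $F$ a positive acyclic graph on $n$ edges, hence (Lemma \ref{lem:tri}) $F$ is a positive alternating tree, and conversely each positive alternating tree contributes such a cone. Next, by Definition \ref{def:indexable} applied to positive trees (a \emph{positive indexable collection}), a set $\{T_1,\dots,T_k\}$ of positive alternating trees satisfies $\dim\bigl(\C(T_1)\cap\cdots\cap\C(T_k)\bigr)=n$ exactly when it is positive indexable, i.e.\ exactly when $\C(T_1)\cap\cdots\cap\C(T_k)$ is the closure of a refined chamber in the sense of Section \ref{sec:resonance}. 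Maximally refined chambers then correspond to maximal positive indexable collections: the assignments $R\mapsto\{T: R\subseteq\C(T)\}$ (for a Kostant chamber $R$) and $\{T_i\}\mapsto$ the unique Kostant chamber contained in $\C(T_1)\cap\cdots\cap\C(T_k)$ are mutually inverse, by the same bookkeeping as in the proof of Corollary \ref{cor:indexableR} (a full-dimensional intersection of positive root cones contains a Kostant chamber since the de Loera--Sturmfels common refinement is the finest such subdivision, and $R$ equals the intersection of all $\C(T)$ containing it). This yields $K_n=|\In^+_{n+1}|$.

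The only step needing any care — and it is minor — is the reverse inclusion in the first step: verifying that every positive alternating tree on $[n+1]$ genuinely indexes a cone appearing in the common refinement of de Loera and Sturmfels, i.e.\ that $\Delta_T$ occurs as a top-dimensional simplex of \emph{some} central triangulation of $\P(\kg_{n+1}^+)$. This follows from Postnikov's analysis of triangulations of root polytopes underlying Lemma \ref{lem:tri}, and once it is in place the remainder of the argument is a formal transcription of the resonance case.
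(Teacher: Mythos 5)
Your argument is correct and follows essentially the same route as the paper: the corollary is stated there without a separate proof, precisely as the positive-tree mirror of Corollary \ref{cor:indexableR}, with Theorem \ref{thm:deLS} playing the role you assign it, and your bookkeeping bijection between maximally refined chambers and maximal positive indexable collections is the intended argument. The extendability point you flag (that every $\Delta_T$ for a positive alternating tree occurs in some central triangulation of $\P(\kg_{n+1}^+)$) is true but is not something the paper itself addresses, since it reads Theorem \ref{thm:deLS} as already phrased in terms of intersections of cones over positive alternating trees.
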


See Figure \ref{fig:labeledrank3+} to see the $K_3=7$ chambers of $\K_3$ labeled by maximal indexable collections of positive alternating trees on $[4]$. Compare with Figure \ref{fig:labeledrank3}.

\begin{cor}[Compare with Theorem \ref{thm:maxclique}]\label{cor:clique+}
All maximal positive indexable sets in $\In_n^+$ are cliques in the positive compatibility graph $\Gamma_n^+$.
\end{cor}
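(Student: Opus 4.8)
The plan is to deduce this directly from the definitions; no new work beyond the resonance case is needed. Recall that by Definition~\ref{def:indexable} an indexable collection $\mathbf{T}=\{T_1,\dots,T_k\}$ is \emph{by definition} a set of pairwise sign compatible alternating trees (the full-dimensionality condition being imposed \emph{on top of} pairwise compatibility), and that a positive indexable collection moreover consists entirely of positive alternating trees. So first I would observe that any two members of a positive indexable collection are sign compatible positive alternating trees, hence adjacent in $\Gamma_n^+$ by Definition~\ref{def:compatiblegraph}; therefore $\mathbf{T}$ is a clique in $\Gamma_n^+$. Applying this to a maximal collection $\mathbf{T}\in\In_n^+$ gives the statement. (In fact this shows every positive indexable collection is a clique in $\Gamma_n^+$; the word ``maximal'' appears only because that is the case of interest.)

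The one point I would flag — and the reason this corollary says ``cliques'' rather than ``maximal cliques,'' unlike Theorem~\ref{thm:maxclique} — is that the maximality half does \emph{not} survive restriction to positive trees, and I would record this in a remark. The proof of Theorem~\ref{thm:maxclique} runs through Lemma~\ref{lem:?to+}, whose engine is the arc swap of Lemma~\ref{lem:swapflow}: there one enlarges $T$ to $T^+$ by inserting an arc $(k,j)$ (Case~1) or $(i,l)$ (Cases~2 and~3), where $k,i$ are sources and $j,l$ are sinks; nothing forces $k<j$ or $i<l$, so $T^+$, and hence the output tree $T'$, may fail to be positive even when $T$ is positive. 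Thus the tree-modification used in the proof of Theorem~\ref{thm:maxclique} to extend an indexable collection inside a larger clique is not available in $\Gamma_n^+$. Moreover, the trees $T_1,T_2,T_3$ of Figure~\ref{fig:badtriple} are themselves positive alternating trees, pairwise sign compatible but not jointly indexable, so a maximal clique of $\Gamma_n^+$ can genuinely fail to be an indexable collection; there is therefore no reason for a maximal positive indexable set to be a maximal clique either.

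There is no substantive obstacle here: the forward direction is a one-line unravelling of Definitions~\ref{def:indexable} and~\ref{def:compatiblegraph}, and the only real content is the negative observation above, which needs nothing more than inspecting the arcs added in Lemma~\ref{lem:swapflow} and the three trees of Figure~\ref{fig:badtriple}.
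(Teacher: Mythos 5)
Your proof is correct and matches the paper's (essentially definitional) argument: since an indexable collection is by definition pairwise sign compatible, a positive indexable collection is automatically a clique in the induced subgraph $\Gamma_n^+$, and maximality plays no role in this direction. Your cautionary remark about why maximality of cliques does not transfer mirrors exactly the paper's own discussion of Lemma~\ref{lem:swapflow} failing to preserve positivity, so nothing is missing.
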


We caution that while $\Gamma_n^+ \subset \Gamma_n$, it is \emph{not} true that $\In_n^+$ is a subset of $\In_n$. Note that in particular it is not obvious whether the sets in $\In_n^+$ are \emph{maximal} cliques in $\Gamma_n^+$, and our proof of Theorem \ref{thm:maxclique} does not readily adapt itself to positive alternating trees. (In particular, if a new edge is created in Lemma \ref{lem:swapflow}, we cannot control whether it is of the form $(i,j)$ with $i<j$.) Thus as a follow-up to Question \ref{q:cliques} we propose the following question.

\begin{q}
Which cliques in $\Gamma_n^+$ are/are not positive indexable collections? In particular, is it true that all maximal positive indexable sets in $\In_n^+$ are maximal cliques in the positive compatibility graph $\Gamma_n^+$?
\end{q}

\begin{figure}
\[
\begin{tikzpicture}[scale=2.5]
\coordinate (a) at (0,4);
\coordinate (b) at (2,0);
\coordinate (c) at (-2,0);
\coordinate (d) at (1.55,2.36);
\coordinate (e) at (-1.55,2.36);
\coordinate (f) at (0,1.2);
\draw (a) to [in=90,out=-30]  (b);
\draw (b) to [in=-20,out=200]  (c);
\draw (c) to [in=210,out=90]  (a);
\draw (c) to [in=220,out=30] (d);
\draw (b) to [in=-40,out=150] (e);
\draw (a) to (f);
\draw (d) to [in=10,out=170] (e);
\draw (a) node[fill=black,circle, inner sep=2] {} node[above] {$e_2-e_3$};
\draw (b) node[fill=black,circle, inner sep=2] {} node[right] {$e_3-e_4$};
\draw (c) node[fill=black,circle, inner sep=2] {} node[left] {$e_1-e_2$};
\draw (d) node[fill=black,circle, inner sep=2] {} node[above right] {$e_2-e_4$};
\draw (e) node[fill=black,circle, inner sep=2] {} node[above left] {$e_1-e_3$};
\draw (f) node[fill=black,circle, inner sep=2] {} node[left,xshift=-3pt] {$e_1-e_4$};
\draw (0,.25) node {
\begin{tikzpicture}[baseline=0,>=stealth,scale=.5]
\foreach \x in {1,...,4}{
\coordinate (\x) at (\x,0);
}
\foreach \x in {1,3}{
 \draw (\x) node[below,fill=white,draw=black,circle,inner sep =2] {};
}
\foreach \x in {2,4}{
 \draw (\x) node[below,fill=black,draw=black,circle,inner sep =2] {};
}
\draw[->] (1) to [in=90,out=90] (2);
\draw[->] (1) to [in=90,out=90] (4);
\draw[->] (3) to [in=90,out=90] (4);
\end{tikzpicture}
};
\draw (1.2,1.2) node {
\begin{tikzpicture}[baseline=0,>=stealth,scale=.5]
\foreach \x in {1,...,4}{
\coordinate (\x) at (\x,0);
}
\foreach \x in {1,2,3}{
 \draw (\x) node[below,fill=white,draw=black,circle,inner sep =2] {};
}
\foreach \x in {4}{
 \draw (\x) node[below,fill=black,draw=black,circle,inner sep =2] {};
}
\draw[->] (1) to [in=90,out=90] (4);
\draw[->] (2) to [in=90,out=90] (4);
\draw[->] (3) to [in=90,out=90] (4);
\end{tikzpicture}
};
\draw (.45,2.15) node {
\begin{tabular}{c}
\begin{tikzpicture}[baseline=0,>=stealth,scale=.5]
\foreach \x in {1,...,4}{
\coordinate (\x) at (\x,0);
}
\foreach \x in {1,2}{
 \draw (\x) node[below,fill=white,draw=black,circle,inner sep =2] {};
}
\foreach \x in {3,4}{
 \draw (\x) node[below,fill=black,draw=black,circle,inner sep =2] {};
}
\draw[->] (1) to [in=90,out=90] (4);
\draw[->] (2) to [in=90,out=90] (3);
\draw[->] (2) to [in=90,out=90] (4);
\end{tikzpicture}
\\
\begin{tikzpicture}[baseline=0,>=stealth,scale=.5]
\foreach \x in {1,...,4}{
\coordinate (\x) at (\x,0);
}
\foreach \x in {1,2}{
 \draw (\x) node[below,fill=white,draw=black,circle,inner sep =2] {};
}
\foreach \x in {3,4}{
 \draw (\x) node[below,fill=black,draw=black,circle,inner sep =2] {};
}
\draw[->] (1) to [in=90,out=90] (3);
\draw[->] (1) to [in=90,out=90] (4);
\draw[->] (2) to [in=90,out=90] (4);
\end{tikzpicture}
\end{tabular}
};
\draw (.5,3) node {
\begin{tabular}{c}
\begin{tikzpicture}[baseline=0,>=stealth,scale=.5]
\foreach \x in {1,...,4}{
\coordinate (\x) at (\x,0);
}
\foreach \x in {1,2}{
 \draw (\x) node[below,fill=white,draw=black,circle,inner sep =2] {};
}
\foreach \x in {3,4}{
 \draw (\x) node[below,fill=black,draw=black,circle,inner sep =2] {};
}
\draw[->] (1) to [in=90,out=90] (4);
\draw[->] (2) to [in=90,out=90] (3);
\draw[->] (2) to [in=90,out=90] (4);
\end{tikzpicture}
\\
\begin{tikzpicture}[baseline=0,>=stealth,scale=.5]
\foreach \x in {1,...,4}{
\coordinate (\x) at (\x,0);
}
\foreach \x in {1,2}{
 \draw (\x) node[below,fill=white,draw=black,circle,inner sep =2] {};
}
\foreach \x in {3,4}{
 \draw (\x) node[below,fill=black,draw=black,circle,inner sep =2] {};
}
\draw[->] (1) to [in=90,out=90] (3);
\draw[->] (2) to [in=90,out=90] (3);
\draw[->] (2) to [in=90,out=90] (4);
\end{tikzpicture}
\end{tabular}
};
\draw (-.5,3) node {
\begin{tabular}{c}
\begin{tikzpicture}[baseline=0,>=stealth,scale=.5]
\foreach \x in {1,...,4}{
\coordinate (\x) at (\x,0);
}
\foreach \x in {1,2}{
 \draw (\x) node[below,fill=white,draw=black,circle,inner sep =2] {};
}
\foreach \x in {3,4}{
 \draw (\x) node[below,fill=black,draw=black,circle,inner sep =2] {};
}
\draw[->] (1) to [in=90,out=90] (4);
\draw[->] (1) to [in=90,out=90] (3);
\draw[->] (2) to [in=90,out=90] (3);
\end{tikzpicture}
\\
\begin{tikzpicture}[baseline=0,>=stealth,scale=.5]
\foreach \x in {1,...,4}{
\coordinate (\x) at (\x,0);
}
\foreach \x in {1,2}{
 \draw (\x) node[below,fill=white,draw=black,circle,inner sep =2] {};
}
\foreach \x in {3,4}{
 \draw (\x) node[below,fill=black,draw=black,circle,inner sep =2] {};
}
\draw[->] (1) to [in=90,out=90] (3);
\draw[->] (2) to [in=90,out=90] (3);
\draw[->] (2) to [in=90,out=90] (4);
\end{tikzpicture}
\end{tabular}
};
\draw (-.45,2.15) node {
\begin{tabular}{c}
\begin{tikzpicture}[baseline=0,>=stealth,scale=.5]
\foreach \x in {1,...,4}{
\coordinate (\x) at (\x,0);
}
\foreach \x in {1,2}{
 \draw (\x) node[below,fill=white,draw=black,circle,inner sep =2] {};
}
\foreach \x in {3,4}{
 \draw (\x) node[below,fill=black,draw=black,circle,inner sep =2] {};
}
\draw[->] (1) to [in=90,out=90] (4);
\draw[->] (1) to [in=90,out=90] (3);
\draw[->] (2) to [in=90,out=90] (3);
\end{tikzpicture}
\\
\begin{tikzpicture}[baseline=0,>=stealth,scale=.5]
\foreach \x in {1,...,4}{
\coordinate (\x) at (\x,0);
}
\foreach \x in {1,2}{
 \draw (\x) node[below,fill=white,draw=black,circle,inner sep =2] {};
}
\foreach \x in {3,4}{
 \draw (\x) node[below,fill=black,draw=black,circle,inner sep =2] {};
}
\draw[->] (1) to [in=90,out=90] (3);
\draw[->] (1) to [in=90,out=90] (4);
\draw[->] (2) to [in=90,out=90] (4);
\end{tikzpicture}
\end{tabular}
};
\draw (-1.2,1.2) node {
\begin{tikzpicture}[baseline=0,>=stealth,scale=.5]
\foreach \x in {1,...,4}{
\coordinate (\x) at (\x,0);
}
\foreach \x in {1}{
 \draw (\x) node[below,fill=white,draw=black,circle,inner sep =2] {};
}
\foreach \x in {2,3,4}{
 \draw (\x) node[below,fill=black,draw=black,circle,inner sep =2] {};
}
\draw[->] (1) to [in=90,out=90] (2);
\draw[->] (1) to [in=90,out=90] (3);
\draw[->] (1) to [in=90,out=90] (4);
\end{tikzpicture}
};
\end{tikzpicture}
\]
\caption{The 7 Kostant chambers of $\K_3$, labeled by indexable collections of positive alternating trees on $[4]$. Compare with Figure \ref{fig:labeledrank3}.}\label{fig:labeledrank3+}
\end{figure}
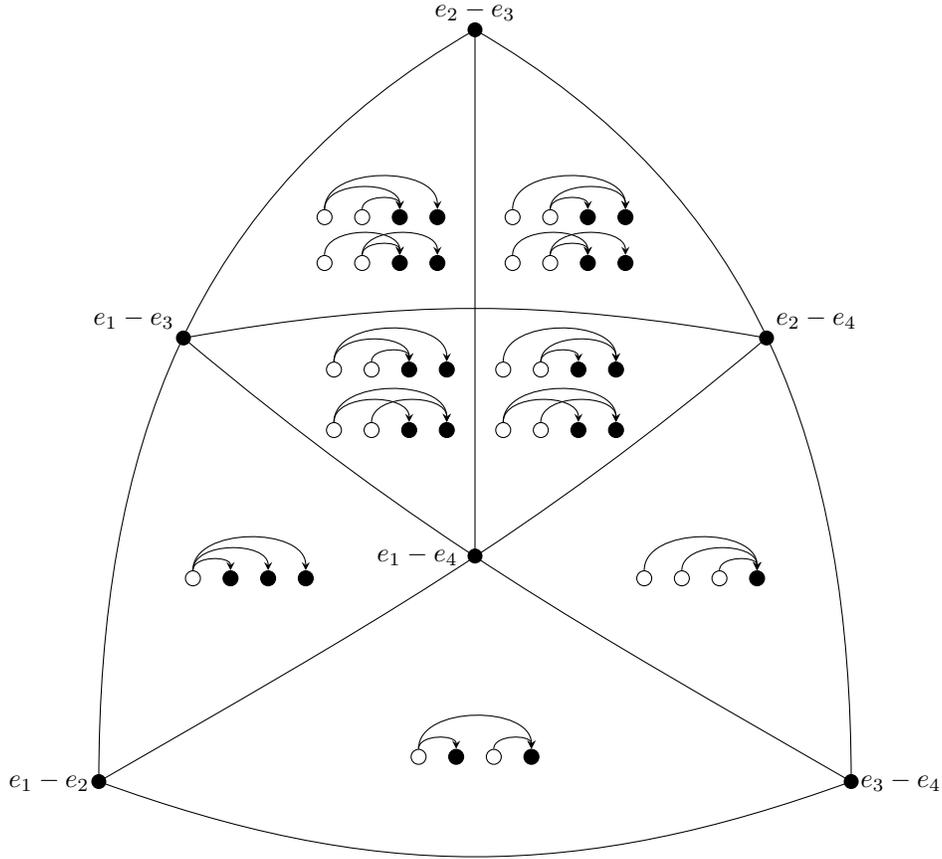
 
From Proposition \ref{lem:chambersandroots} and Theorem \ref{thm:deLS} the Kostant chambers contain resonance chambers, and by the Corollary \ref{cor:cyclicsymmetry} about the consequences of cyclic permutation of coordinates, we obtain the following upper bound recorded also in Observation \ref{obs:Kn}:

\begin{cor}\label{thm:Kn}
The resonance chambers in the positive root cone $\C(\kg_n^+)$ refine the Kostant chambers. In particular,
\[
 K_n \leq R_n^+ = \frac{R_n}{(n+1)}.
\]
\end{cor}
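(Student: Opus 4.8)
The idea is that $\K_n$ and the collection of resonance chambers contained in the positive root cone $\C(\kg_{n+1}^+)$ are two subdivisions of the \emph{same} cone $\C(\kg_{n+1}^+)$, both obtained by cutting along walls of smaller cones, and that the cutting locus for the first is contained in the cutting locus for the second; the inequality is then immediate. Concretely, by Theorem \ref{thm:deLS} the Kostant chambers $\K_n$ are the maximally refined chambers obtained by intersecting the positive root cones $\C(T)$ as $T$ ranges over positive alternating trees on $[n+1]$, whereas by Proposition \ref{lem:chambersandroots} the resonance chambers are the maximally refined chambers obtained by intersecting \emph{all} root cones $\C(T)$, $T$ an alternating tree on $[n+1]$. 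Since every positive alternating tree is in particular an alternating tree, every positive root cone is a root cone.

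First I would recall from the proof of Proposition \ref{lem:chambersandroots} that each wall of a root cone $\C(T)$ has the form $\C(T_I\cup T_{[n+1]-I})$ for some $\emptyset\neq I\subsetneq[n+1]$ and, by Lemma \ref{lem:signflow}, lies in the resonance hyperplane $U'_I$. Applying this to positive alternating trees shows that the union of the walls of positive root cones---which contains every wall separating two Kostant chambers inside $\C(\kg_{n+1}^+)$---is contained in the union $\bigcup_I U'_I$ of resonance hyperplanes. Hence a resonance chamber $R$ contained in $\C(\kg_{n+1}^+)$, being connected and disjoint from every $U'_I$, cannot meet any wall separating two Kostant chambers, and therefore lies entirely within a single Kostant chamber. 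Thus each Kostant chamber is a union of resonance chambers: the resonance chambers lying in $\C(\kg_{n+1}^+)$ refine $\K_n$.

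To conclude, a finer subdivision has at least as many pieces: since the $R_n^+$ resonance chambers in $\C(\kg_{n+1}^+)$ subdivide, up to a set of measure zero, the $K_n$ Kostant chambers, we obtain $K_n\leq R_n^+$, and $R_n^+ = R_n/(n+1)$ by Corollary \ref{cor:cyclicsymmetry}. The crux is the refinement claim---that no resonance chamber straddles two Kostant chambers---which rests entirely on the containment, established above, of the walls separating Kostant chambers in the resonance hyperplanes. The remaining ingredients are bookkeeping, the only point deserving care being the appeal, via Theorem \ref{thm:deLS}, to the identification of the domains of polynomiality of $\kp_n$ with the maximally refined chambers cut out by positive root cones.
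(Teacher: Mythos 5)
Your proposal is correct and takes essentially the same route as the paper, which obtains the corollary by combining Theorem \ref{thm:deLS}, Proposition \ref{lem:chambersandroots}, and Corollary \ref{cor:cyclicsymmetry}. Your wall-containment argument (walls of positive root cones lie in resonance hyperplanes, so no resonance chamber can straddle two Kostant chambers) simply makes explicit the refinement step the paper leaves implicit.
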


As stated in Problem \ref{conj:bounds} in the introduction, we have also observed
\[
 R_n < K_{n+1} < \frac{1}{2}T_n,
\]
on small data points, though we cannot prove this.  

\section{The threshold arrangement}\label{sec:threshold}

 The threshold arrangement $\Th_n$ (corresponding to $T_{n-1}$) has normal vectors given by all $\pm 1$ vectors in $\R^{n}$ (corners of an $n$-cube). Since a nonzero vector $v$ and its opposite $-v$ give the same hyperplane, we can choose as representative normal vectors those vectors 
\[
v_S = (\pm 1, \pm 1, \ldots, \pm 1, -1),
\]
where the elements of $S \subseteq [n-1]=\{1,2,\ldots,n-1\}$ indicate which entries are positive.  For example, if $n=8$,
\[
 v_{\{1,3,4,6\}} = (1,-1,1,1,-1,1,-1,-1). 
\]
Let $V_S = \{\xx \in \R^{n} : \langle \xx, v_S \rangle = 0\}$ denote the hyperplane normal to $v_S$. Let $\Th_{n}$ denote the arrangement of these $2^{n-1}$ hyperplanes, and let $T_{n-1}$ denote the number of chambers in this arrangement.  We call this the \emph{threshold arrangement} since its chambers are in bijection with threshold functions on $n-1$ variables (see, e.g., \cite{Zuev92}). 

For example, in Figure \ref{fig:thresharr3b} (also in Figure \ref{fig:thresharr3}) we see the threshold arrangement of rank $3$. Here, the four hyperplanes are
\begin{align*}
V_{\emptyset} &= \{(x,y,z)\in \R^3 : x+y+z=0\},\\
V_1 &= \{(x,y,z)\in \R^3 : x=y+z\},\\ 
V_2 &= \{(x,y,z)\in \R^3 : y=x+z\},\\
V_{12} &= \{(x,y,z)\in \R^3 : x+y=z\}.
\end{align*}
The normal vectors are $v_{\emptyset} = (-1,-1,-1)$, $v_1 = (1,-1,-1)$, $v_2 = (-1,1,-1)$, and $v_{12} = (1,1,-1)$.

\begin{figure}
\[
\begin{tikzpicture}[scale=1,baseline=1cm]
\draw[draw=none,fill=white!80!black,opacity=1] (0,0)--(3,3)--(11,3)--(8,0);
\draw[thick] (6,0)--(5,3);
\draw[thick] (1.5,1.5)--(9.5,1.5);
\draw[thick] (1,0)--(10,3);
\draw[draw=none, fill=white!80!black,opacity=.5] (.25,2.25)--(9.25,5.25)--(10.5,1.5)--(1.5,-1.5)--(.25,2.25);
\draw[draw=none, fill=white!80!black,opacity=.5] (4.5,-2)--(7.5,2)--(6.5,5)--(3.5,1)--(4.5,-2);
\draw (5.5,1.5)--(6.7,4.4);
\draw[draw=none, fill=white!80!black,opacity=.5] (.5,3.5)--(2.5,-.5)--(10.5,-.5)--(8.5,3.5)--(.5,3.5);
\draw (5.5,1.5) node[fill=black, circle, inner sep=1] {};
\draw (5.5,1.5)--(7,3.5);
\draw (5.5,1.5)--(4,3.5);
\draw (.5,3.5)--(8.5,3.5)--(10.5,-.5);
\draw (1.5,-1.5)--(.25,2.25)--(9.25,5.25);
\draw (6.5,5)--(7.5,2)--(4.5,-2);
\draw (1,0)--(5.5,1.5);
\draw (7.15,1.5)--(9.5,1.5);
\draw (6,0)--(5.5,1.5);
\draw (10,3)--(8.92,2.64);
\draw (0,0)--(8,0)--(11,3);
\draw (8,0) node[above left, fill=none, inner sep=1] {$V_{\emptyset}$};
\draw (10,-1) node[above left, fill=none, inner sep=1] {$V_{1}$};
\draw (10,4) node[above, fill=none, inner sep=1] {$V_{2}$};
\draw (5,-2) node[fill=none, inner sep=1] {$V_{12}$};
\end{tikzpicture}
\]
\caption{A view of the threshold arrangement $\Th_3$ of rank 3. The six regions of the resonance arrangement $\Res_2$ can be seen as the restrictions of $V_1, V_2$, and $V_{12}$ to the subspace $V_{\emptyset}$.}\label{fig:thresharr3b}
\end{figure}
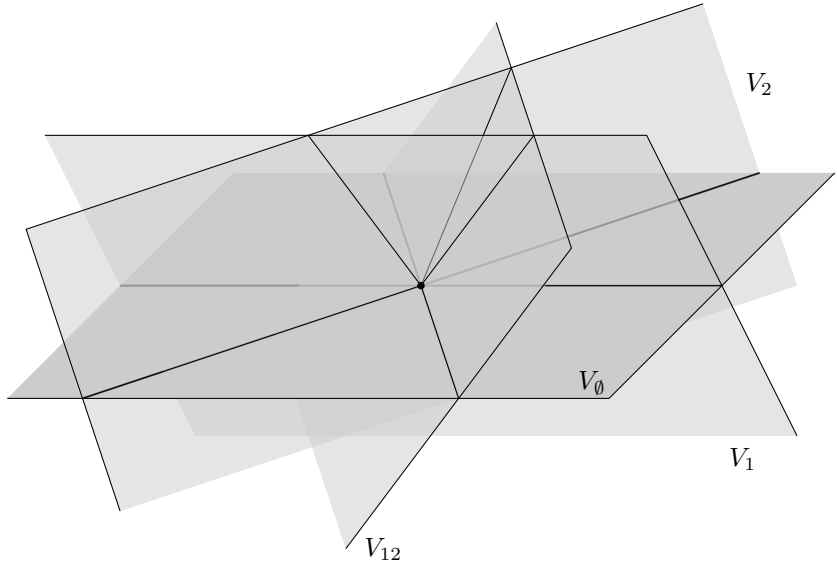

\begin{defn}[Threshold sign vectors]\label{def:threshvector}
Given a point $\xx \in \R^n$, the \textbf{threshold sign vector} of $\xx$ is denoted by
\[
 \tau(x) = (\tau_S(x))_{S \subseteq [n-1]},
\]
where 
\[
\tau_S(\xx) = \begin{cases} + & \mbox{if } \langle \xx, v_S \rangle > 0,\\
 - & \mbox{if } \langle \xx, v_S \rangle < 0,\\
 0 & \mbox{if } \langle \xx, v_S \rangle = 0.
 \end{cases}
\]
\end{defn}

For example, the point $\xx=(1,2,1)$ has $\tau(\xx)$ given by
\[
 (\tau_{\emptyset},\tau_1, \tau_2, \tau_{12}) = (-,-,0,+).
\]

\subsection{Invariance of the threshold arrangement}

The arrangement $\Th_n$ is invariant under flipping signs in coordinates, i.e., under reflections across coordinate hyperplanes (in fact, it is invariant under the action of the hyperoctahedral group of signed permutations of coordinates; this fact was deployed by Zuev \cite{Zuev92} in the proof of a lower bound of $T_n$). This implies that the face structure in any particular hyperplane of $\Th_n$ is the same as the face structure of $\Th_n$ in $V_{\emptyset}$. We make this claim precise now.

First we note the following lemma about sign vectors.

\begin{lem}\label{lem:ts}
For any $I, J \subseteq [n-1]$, and $\xx \in V_J$, we have
\begin{equation}\label{eq:signs}
 \frac{\langle \xx, v_I \rangle}{2} = \langle \xx, u_I-u_J \rangle = \langle \xx, u_I \rangle -\langle \xx, u_J \rangle.
\end{equation}
In particular, if $\xx \in \Res_{n-1}$, then $\sigma_I(\xx)=\tau_I(\xx)$.
\end{lem}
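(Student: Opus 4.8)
The plan is to verify the identity \eqref{eq:signs} by direct computation using the explicit description of the normal vectors, and then deduce the statement about sign vectors. First I would unwind the definition of $v_I$. Recall that $v_I = (\pm 1, \ldots, \pm 1, -1)$ where the coordinates indexed by $I \subseteq [n-1]$ are $+1$ and all other coordinates—including coordinate $n$—are $-1$. In terms of the $0/1$ indicator vectors $u_S$, this says precisely that $v_I = u_I - u_{[n]\setminus I}$, since $u_I$ has ones exactly on $I$ and $u_{[n]\setminus I}$ has ones exactly off $I$ (and $n \notin I$, so coordinate $n$ gets $-1$). Writing $u_{[n]\setminus I} = \mathbf{1} - u_I$ where $\mathbf{1} = u_{[n]}$, we get $v_I = 2u_I - \mathbf{1}$. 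Hence for any $\xx$,
\[
 \langle \xx, v_I \rangle = 2\langle \xx, u_I \rangle - \langle \xx, \mathbf{1}\rangle = 2\langle \xx, u_I\rangle - \sum_i x_i.
\]

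Next I would bring in the hypothesis $\xx \in V_J$. By definition $V_J = \{\xx : \langle \xx, v_J\rangle = 0\}$, and by the computation above this means $2\langle \xx, u_J\rangle = \sum_i x_i$, i.e. $\langle \xx, u_J \rangle = \tfrac12 \sum_i x_i$. Substituting $\sum_i x_i = 2\langle \xx, u_J\rangle$ into the formula for $\langle \xx, v_I\rangle$ yields
\[
 \langle \xx, v_I \rangle = 2\langle \xx, u_I \rangle - 2\langle \xx, u_J\rangle = 2\langle \xx, u_I - u_J\rangle,
\]
which is exactly \eqref{eq:signs} after dividing by $2$.

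Finally, for the last sentence: if $\xx \in \Res_{n-1}$, then in particular $\xx \in V_{\emptyset}$, i.e. $\sum_i x_i = 0$, so we may take $J = \emptyset$ above; since $u_{\emptyset}$ is the zero vector, \eqref{eq:signs} gives $\langle \xx, v_I\rangle = 2\langle \xx, u_I\rangle$. A positive scalar multiple does not change the sign, so $\tau_I(\xx) = \sgn\langle \xx, v_I\rangle = \sgn\langle \xx, u_I\rangle = \sigma_I(\xx)$ for every $\emptyset \subsetneq I \subseteq [n-1]$, as claimed. There is no real obstacle here; the only thing to be careful about is the bookkeeping that $n \notin I$ and $n \notin J$ (both are subsets of $[n-1]$), which is what makes $v_I = 2u_I - \mathbf{1}$ hold with the last coordinate correctly equal to $-1$.
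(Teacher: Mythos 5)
Your proposal is correct and amounts to the same direct computation as the paper's proof: the paper uses $\langle \xx, v_J\rangle=0$ to solve for $x_n$ and then simplifies the resulting coordinate sums, while you package the identical substitution more cleanly via the identity $v_I = 2u_I - \mathbf{1}$ (valid since $I\subseteq[n-1]$), so that $\xx\in V_J$ reads $\sum_i x_i = 2\langle \xx,u_J\rangle$. Your handling of the final claim (taking $J=\emptyset$, noting $u_{\emptyset}=0$ and that a positive scalar preserves sign) matches the paper's intent as well.
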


The lemma says that if $J = \emptyset$, i.e., if $\xx$ is in $\Res_{n-1}$, then the sign vectors coincide: $\tau_I(\xx) = \sigma_I(\xx)$. The only difference is that in $\tau(\xx)$ we also note that $\tau_{\emptyset}(\xx) = 0$.

\begin{proof}
Let $\xx \in V_J$. Then by definition,
\[
 \langle \xx, v_J \rangle = \sum_{i \in J} x_i - \sum_{j \in [n]-J} x_j = 0,
\]
and so
\[
 x_n = \sum_{i \in J} x_i - \sum_{j \in [n-1]-J} x_j.
\]
Thus,
\begin{align*}
 \langle \xx, v_I \rangle &= \sum_{i \in I} x_i - \sum_{j \in [n-1]-I} x_j - x_n, \\
  &= \sum_{i \in I} x_i - \sum_{j \in [n-1]-I} x_j + \sum_{j \in [n-1]-J} x_j - \sum_{i \in J} x_i, \\
  &= 2\sum_{i \in I-J} x_i - 2\sum_{j \in J-I} x_j,\\
  &= 2\sum_{i \in I-J} x_i + 2\sum_{k \in I \cap J} x_k - 2\sum_{k \in I \cap J} x_k- 2\sum_{j \in J-I} x_j,\\
  &= 2\sum_{i \in I} x_i - 2\sum_{j \in J} x_j,\\
  &= 2 \langle \xx, u_I \rangle -2\langle \xx, u_J \rangle.
\end{align*}
\end{proof}

Now, let $r_i(x)$ denote the reflection that sends $\xx$ to $\xx-2\langle \xx, e_i\rangle e_i$, where $e_i$ is the standard basis vector, i.e., we subtract $2x_i$ from the $i$th coordinate of $\xx$:
\[
 r_i((x_1,\ldots,x_i, \ldots, x_n)) = (x_1,\ldots,-x_i,\ldots,x_n).
\]
This ``toggles'' the sign of the $i$th entry and leaves the rest of the vector untouched.

Notice that $r_i^2(\xx) = \xx$, so the toggle is an involution (also clear since it's just the reflection across the coordinate hyperplane), and if $i\neq j$,
\[
 r_i(r_j(\xx)) =(x_1,\ldots, -x_i,\ldots,-x_j,\ldots,x_n) =r_j(r_i(\xx)),
\]
so these toggles commute. Since the toggles commute, it makes sense to write 
\[
r_J(\xx) = \xx-\sum_{i \in J} 2\langle \xx, e_i\rangle e_i,
\]
for any subset $J \subseteq [n-1]$. Again, $r_J^2(\xx) = \xx$.

In what follows, let $I\Delta J = (I\cup J)-(I \cap J)$ denotes the symmetric difference of two sets.

\begin{lem}\label{lem:ST}
 For any $\xx \in \R^n$, and any subsets $I, J \subseteq [n-1]$, we have
 \[
  \tau_I(\xx) = \tau_{I\Delta J}(r_J(\xx)).
 \] 
\end{lem}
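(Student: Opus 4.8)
The plan is to prove the identity by tracking precisely how the reflection $r_J$ acts on the normal vectors $v_S$. The key algebraic fact is that $r_J$ carries $v_{I\Delta J}$ to $v_I$; once this is in hand, the lemma is immediate from the observation that $r_J$ is an orthogonal involution.

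First I would note that $r_J = \prod_{i\in J} r_i$ is a composition of commuting coordinate-hyperplane reflections, hence an orthogonal transformation and an involution, so that $\langle r_J\xx,\yy\rangle = \langle \xx, r_J\yy\rangle$ for all $\xx,\yy \in \R^n$. Next comes the heart of the matter: establishing that $r_J(v_S) = v_{S\Delta J}$ for every $S\subseteq[n-1]$. Writing $v_S = \sum_{k\in S} e_k - \sum_{k\in[n]\setminus S} e_k$, the vector $r_J(v_S)$ agrees with $v_S$ in each coordinate $k\notin J$ and has the opposite sign in each coordinate $k\in J$. But flipping the sign of the $k$th coordinate of $v_S$ is exactly the same as toggling whether $k$ belongs to the defining subset, i.e.\ replacing $S$ by $S\Delta\{k\}$; doing this for all $k\in J$ replaces $S$ by $S\Delta J$. (Here it matters that $J\subseteq[n-1]$, so the last coordinate — which is $-1$ in every $v_S$ — is never touched, and $r_J(v_S)$ is again a legitimate threshold normal vector.) Applying this with $S = I\Delta J$ and using $(I\Delta J)\Delta J = I$ yields $r_J(v_{I\Delta J}) = v_I$.

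Finally I would assemble the pieces:
\[
 \langle r_J(\xx), v_{I\Delta J}\rangle = \langle \xx, r_J(v_{I\Delta J})\rangle = \langle \xx, v_I\rangle,
\]
so the two inner products are equal, and in particular have the same sign. By Definition \ref{def:threshvector} this says exactly $\tau_{I\Delta J}(r_J(\xx)) = \tau_I(\xx)$, as desired. (As a side remark, this computation also shows that $r_J$ permutes the hyperplanes of $\Th_n$ via $V_S \mapsto V_{S\Delta J}$, which is the precise form of the sign-flipping invariance alluded to above.)

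I do not expect any genuine obstacle here: the entire content is the bookkeeping identity $r_J(v_S) = v_{S\Delta J}$, and the only thing to be careful about is respecting the convention that $n\notin S$ (automatic since $I,J\subseteq[n-1]$). An alternative, even more pedestrian route would be to expand $\langle r_J(\xx), v_{I\Delta J}\rangle$ as a sum over coordinates and check term by term that the coefficient of $x_k$ equals the coefficient of $x_k$ in $\langle \xx, v_I\rangle$, splitting into the cases $k\in J$ and $k\notin J$; this produces the same conclusion with no appeal to orthogonality.
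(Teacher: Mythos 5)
Your proof is correct, but it takes a slightly different route from the paper's. The paper proves the lemma by brute force: writing $\mathbf{y}=r_J(\xx)$ and expanding $\langle \mathbf{y}, v_{I\Delta J}\rangle$ as a sum over the four blocks $I-J$, $J-I$, $I\cap J$, $[n]-(I\cup J)$, then recombining terms to recover $\langle \xx, v_I\rangle$ — essentially the ``pedestrian'' coordinate-by-coordinate check you mention at the end. You instead isolate the structural fact $r_J(v_S)=v_{S\Delta J}$ (correctly noting that the last coordinate is untouched because $J\subseteq[n-1]$, so the image is again a legitimate normal vector with $n$-th entry $-1$), and then transfer $r_J$ across the inner product using the fact that a composition of commuting coordinate reflections is self-adjoint, giving $\langle r_J(\xx), v_{I\Delta J}\rangle=\langle \xx, r_J(v_{I\Delta J})\rangle=\langle \xx, v_I\rangle$. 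Both arguments establish the same inner-product identity, so the difference is one of packaging rather than substance; what your version buys is that it makes explicit the statement (only alluded to informally in the paper) that the sign-flip group permutes the threshold hyperplanes via $V_S\mapsto V_{S\Delta J}$, which is exactly the invariance later exploited in Corollary \ref{cor:wallbij}, while the paper's expansion is more elementary and requires no appeal to orthogonality or adjointness. No gaps: the identity $(I\Delta J)\Delta J=I$ and the self-adjointness of $r_J$ are both used correctly.
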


\begin{proof} Let $\mathbf{y} = r_J(\xx)$. Notice that $y_i = x_i$ if $i \notin J$, and $y_j=-x_j$ if $j \in J$. Then,
\begin{align*}
 \langle \mathbf{y}, v_{I \Delta J} \rangle &= \sum_{i \in I \Delta J } y_i - \sum_{j \in [n]-I\Delta J} y_j,\\
 &= \sum_{i \in I - J } y_i + \sum_{k \in J-I} y_k - \sum_{j \in [n]-(I\cup J)} y_j - \sum_{l \in I\cap J} y_l,\\
 &= \sum_{i \in I - J } x_i - \sum_{k \in J-I} x_k - \sum_{j \in [n]-(I\cup J)} x_j + \sum_{l \in I\cap J} x_l,\\
 &= \sum_{i \in I} x_i - \sum_{j \in [n]-I} x_j,\\
 &= \langle \xx, v_I \rangle.
\end{align*}
\end{proof}

If we take $I=J$ in Lemma \ref{lem:ST}, we see that if $\xx \in V_J$, then $r_J(\xx) \in V_{\emptyset}$. This, along with Lemma \ref{lem:ts}, implies the following observation.

\begin{obs}\label{obs:proj}
If $\xx \in V_J$, then $r_J(\xx) \in V_{\emptyset}$. Moreover, $\sigma_I(r_J(\xx)) = \tau_I(r_J(\xx)) = \tau_{I\Delta J}(\xx)$ for any subset $I \subseteq [n-1]$.
\end{obs}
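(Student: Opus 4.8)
The plan is to obtain Observation \ref{obs:proj} as a formal consequence of Lemma \ref{lem:ST} and Lemma \ref{lem:ts}: all the genuine computation has already been carried out in proving those two lemmas, so what remains is bookkeeping with symmetric differences together with the fact that $r_J$ and the operation $(-)\,\Delta\,J$ are both involutions.

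First I would dispatch the membership claim. Taking $I=J$ in Lemma \ref{lem:ST} gives $\tau_J(\xx)=\tau_{J\Delta J}(r_J(\xx))=\tau_\emptyset(r_J(\xx))$. The hypothesis $\xx\in V_J$ says exactly $\langle \xx,v_J\rangle=0$, i.e.\ $\tau_J(\xx)=0$, so $\tau_\emptyset(r_J(\xx))=0$; since $v_\emptyset=(-1,\ldots,-1)$, this is precisely $\sum_i (r_J(\xx))_i=0$, that is, $r_J(\xx)\in V_\emptyset$.

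For the two sign-vector identities, note that because $r_J(\xx)\in V_\emptyset=\Res_{n-1}$, the ``in particular'' clause of Lemma \ref{lem:ts} (its case $J=\emptyset$) immediately yields $\sigma_I(r_J(\xx))=\tau_I(r_J(\xx))$ for every $I\subseteq[n-1]$. To match this with $\tau_{I\Delta J}(\xx)$ I would invoke Lemma \ref{lem:ST} a second time, now with input point $\xx$ and index $I\Delta J$ in place of $I$: this gives $\tau_{I\Delta J}(\xx)=\tau_{(I\Delta J)\Delta J}(r_J(\xx))=\tau_I(r_J(\xx))$, using $(I\Delta J)\Delta J=I$. Chaining the equalities produces $\sigma_I(r_J(\xx))=\tau_I(r_J(\xx))=\tau_{I\Delta J}(\xx)$, which is the assertion.

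There is no substantive obstacle. The only point requiring care is keeping the two uses of Lemma \ref{lem:ST} distinct — once ``forward'' with $I=J$ to land in $V_\emptyset$, once with a shifted index to rewrite $\tau_{I\Delta J}(\xx)$ — and remembering that symmetric difference with a fixed set is self-inverse. If one prefers a self-contained argument, the identity $\langle r_J(\xx),v_I\rangle=\langle \xx,v_{I\Delta J}\rangle$ can be re-derived coordinatewise exactly as in the proof of Lemma \ref{lem:ST}, but reusing the lemma is cleaner.
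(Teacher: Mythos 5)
Your proposal is correct and follows exactly the paper's route: the paper likewise derives the observation by applying Lemma \ref{lem:ST} with $I=J$ to get $r_J(\xx)\in V_{\emptyset}$, then combining Lemma \ref{lem:ts} (the $J=\emptyset$ case identifying $\sigma$ and $\tau$ on $V_{\emptyset}$) with a second, index-shifted use of Lemma \ref{lem:ST} and the involutivity $(I\Delta J)\Delta J=I$. Your write-up merely makes explicit the bookkeeping the paper leaves implicit.
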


In other words, for $\xx \in V_J$, the $\tau$-sign vector of $\xx$ determines $\sigma$-sign vector of $r_J(\xx)$.

For example, if $\xx = (-1,3,1,1) \in V_{12}$, then it has $\tau(\xx)$ given by
\[
 (\tau_{\emptyset}, \tau_1, \tau_2, \tau_3, \tau_{12}, \tau_{13}, \tau_{23}, \tau_{123}) = (-, -, +, -, 0, -, +, +).
\]
We have $r_{12}(\xx) = (1,-3,1,1) \in V_{\emptyset}$, and $\sigma(\xx)$ is given by 
\[
 (\sigma_{\emptyset}, \sigma_1, \sigma_2, \sigma_3, \sigma_{12}, \sigma_{13}, \sigma_{23}, \sigma_{123}) = (0,+,-,+,-,+,-,-).
\]
Note $\sigma_{\emptyset} = \tau_{12}$, $\sigma_1 = \tau_{2}$, $\sigma_2 = \tau_1$, $\sigma_3=\tau_{123}$, and so on.

We now collect the important consequences of our lemmas and observations.

\begin{cor}\label{cor:wallbij}
For any face $F$ that is not a chamber of $\Th_n$, we have $F \in V_I$ if and only if $r_J(F) \in V_{I\Delta J}$. Moreover, the sign vector $\tau(F)$ uniquely determines $\tau(r_J(F))$, and vice versa.

In particular, for each face $F \in \Res_{n-1}$, we have $r_J(F) \in V_J$ and the sign vector $\tau(r_J(F))$ is uniquely determined by the sign vector $\tau(F)=\sigma(F)$. 

Thus, $r_J$ gives a bijection between the walls of $\Th_n$ in hyperplane $V_J$ and the chambers of the resonance arrangement $\Res_{n-1}$.
\end{cor}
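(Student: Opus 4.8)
The plan is to obtain the whole statement as bookkeeping from Lemma~\ref{lem:ST}, Lemma~\ref{lem:ts}, and the single structural fact that each toggle $r_J$ is a linear involution under which $\Th_n$ is invariant. First I would note that Lemma~\ref{lem:ST} says precisely $r_J(V_I) = V_{I\Delta J}$: indeed $\xx \in V_I$ means $\tau_I(\xx)=0$, which by the lemma is equivalent to $\tau_{I\Delta J}(r_J(\xx))=0$, i.e.\ $r_J(\xx) \in V_{I\Delta J}$. Since $I \mapsto I\Delta J$ is an involution on the subsets of $[n-1]$, the toggle $r_J$ permutes the hyperplanes of $\Th_n$, hence permutes its faces; and for a face $F$ (whose $\tau$-vector is that of any relative-interior point) the transformation rule becomes $\tau_S(r_J(F)) = \tau_{S\Delta J}(F)$ for all $S \subseteq [n-1]$. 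Because $r_J^2 = \mathrm{id}$, this rule is its own inverse, so $\tau(F)$ and $\tau(r_J(F))$ determine one another; reading off the zero entries gives $F \in V_I \iff r_J(F) \in V_{I\Delta J}$, which is the first assertion (and, with $F$ not a chamber, there is at least one such zero entry to track).

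For the ``in particular'' clause I would take a face $F$ of $\Res_{n-1}$, so $F \subseteq V_{\emptyset}$ and $\tau_{\emptyset}(F)=0$; plugging $S=J$ into the rule gives $\tau_J(r_J(F)) = \tau_{\emptyset}(F) = 0$, so $r_J(F) \subseteq V_J$, while $\tau(r_J(F))$ is determined by $\tau(F)$ as above, and Lemma~\ref{lem:ts} identifies $\tau(F)$ with $\sigma(F)$ (the only discrepancy being the redundant entry $\tau_{\emptyset}(F)=0$).

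The bijection then falls out by matching sign-vector descriptions of the two families of faces. A wall of $\Th_n$ contained in $V_J$ is exactly a face with $\tau_J=0$ and every other entry nonzero; a chamber of $\Res_{n-1}$ is a face contained in $V_{\emptyset}$ with $\sigma_S \neq 0$ for all nonempty $S \subseteq [n-1]$, equivalently (via Lemma~\ref{lem:ts}) a face of $\Th_n$ with $\tau_{\emptyset}=0$ and all other entries nonzero. The rule $\tau_S(r_J(F)) = \tau_{S\Delta J}(F)$ moves the unique zero entry from position $J$ to position $\emptyset$ and carries nonzero entries to nonzero entries, so $r_J$ sends the first family into the second, and since $r_J$ is its own inverse it is a bijection. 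I do not expect a real obstacle here; the only points deserving care are the clean separation of ``face'' versus ``point'' sign vectors and the $\sigma$-versus-$\tau$ indexing discrepancy (the extra $\emptyset$-coordinate of $\tau$), both of which are handled by the two cited lemmas.
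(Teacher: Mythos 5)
Your proposal is correct and follows essentially the same route as the paper: the first claim from the toggle rule of Lemma~\ref{lem:ST} (using $(I\Delta J)\Delta J=I$), the ``in particular'' clause from that rule combined with Lemma~\ref{lem:ts}, and the bijection by observing that $r_J$ carries the unique zero entry of a wall's sign vector between position $J$ and position $\emptyset$ and is an involution. Your write-up simply makes explicit the face-versus-point bookkeeping that the paper's terse proof leaves implicit.
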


\begin{proof}
 The first claim is an immediate consequence of Lemma \ref{lem:ST}, since $(I\Delta J)\Delta J = I$. The second claim uses the first claim together with Lemma \ref{lem:ts} that shows the $\tau$- and $\sigma$-vectors coincide in $V_{\emptyset}$. The third statement refers to only codimension one faces of $\Th_n$.
\end{proof}

We are now ready to prove the bounds in Theorem \ref{thm:Rn}.

\subsection{Threshold functions and resonance chambers}

In this section we prove Theorem \ref{thm:Rn}. For convenience we restate the inequality \eqref{eq:ineq} claimed in Theorem \ref{thm:Rn}:
\[
 \frac{(n+1)}{2^{n+1}}T_n < R_n < \frac{1}{2}T_n.
\]

The upper bound follows from Observation \ref{obs:hyp} part (1). That is, since $\Res_{n}$ lives in a hyperplane of $\Th_{n+1}$, its chambers are walls in $\Th_{n+1}$ that separate chambers with $\tau_{\emptyset} > 0$ and $\tau_{\emptyset} < 0$. Thus, for each of the $R_n$ chambers of $\Res_n$ there are two chambers in $\Th_{n+1}$ that contain it on their boundary. This immediately implies
 \[
   2R_n < T_n.
 \]
The bound is not sharp, as can be seen in the $n=3$ case, where there are two chambers with no walls on $V_{\emptyset}$.
 
The lower bound in Theorem \ref{thm:Rn} follows this idea: 
 \[
 (\mbox{chambers in } \Th_n) \hookrightarrow (\mbox{walls in } \Th_n) \leftrightarrow (\mbox{chambers in } \Res_{n-1}) \times (\mbox{hyperplanes in } \Th_n).
 \]
 From Observation \ref{obs:hyp} part (2), we know $nC(\Th_n) \leq 2W(\Th_n)$, where $C(\Th_n) = T_{n-1}$ is the number of chambers of the threshold arrangement, and $W(\Th_n)$ is the number of walls in the arrangement. The inequality is strict since the threshold arrangement is not simplicial (and so there are chambers with more than $n$ walls on their boundary). The walls are partitioned according to the hyperplane they live in, so that $W(\Th_n) = \sum_{I\subseteq [n-1]} W(V_I)$. But, by Corollary \ref{cor:wallbij}, we know $W(V_I) = C(\Res_{n-1}) = R_{n-1}$ for all subsets $I$. Hence,
 \[
  nT_{n-1} < 2W(\Th_n) = 2\sum_{I\subseteq [n-1]} W(V_I) = 2^{n} R_{n-1},
 \]
from which the lower bound claimed in Theorem \ref{thm:Rn} follows:
\[
 \frac{nT_{n-1}}{2^n} < R_{n-1}.
\]

\begin{remark}[Intertwined arrangements]
In this paper we focus on how the resonance arrangement sits inside the threshold arrangement. Curiously, we also note that the threshold arrangement of lower rank embeds in the resonance arrangement as well, implying $T_{n-1} < R_n$. This bound also yields the asymptotic result for $\log_2 R_n$ in Theorem \ref{thm:Rn}, but it is known that $(2^{n-1}+1)T_{n-1} \leq T_n$ (see \cite{Yajima65}), and $T_n/(2^{n-1}+1) < (n+1)T_n/2^{n+1}$ for $n$ larger than $3$, so the lower bound in \eqref{eq:ineq} is better than $T_{n-1}$. 
\end{remark}

\begin{remark}[Better upper bounds]
One can do better than the upper bound in Theorem \ref{thm:Rn} if one understands how many walls to expect in a typical chamber of $\Th_n$. That is, $w(n)T_{n-1} = 2W(\Th_n) = 2^n R_{n-1}$, where $w(n)$ is the average number of walls per chamber. Since $\Th_n$ has rank $n$, $w(n)\geq n$. While neither the threshold arrangement nor the resonance arrangement are simplicial, $w(n)$ might not grow too quickly with $n$. For example, if $w(n)<n^2$ this would say 
\[
 2^n R_{n-1} < n^2 T_{n-1},
\]
which would imply the upper bound:
\[
 R_{n-1} < \frac{n^2T_{n-1}}{2^n}.
\]
The data for (the base-2 logarithm of) this comparison is given in Tables \ref{tab:wndata}, which seems to suggest that $w(n)$ is closer to $n$ than $n^2$.
\end{remark}

\begin{p} \label{p2} 
Estimate $w(n)$, the average number of walls per chamber in $\Th_n$. In particular, is $w(n)<n^2$? 
\end{p}

\begin{table}
\begin{center}
\begin{tabular}{|c||c|c|c||c|} \hline
$n$ & $\log_2 \left( \lfloor \frac{(n+1)}{2^{n+1}}T_n\rfloor \right)$ & $\log_2 \left( R_n\right)$ & $\log_2 \left( \frac{1}{2}T_n\right)$ & $\log_2 \left(\lceil \frac{(n+1)^2}{2^{n+1}}T_n \rceil \right)$\\ 
 \hline \hline
 1 & 1 & 1 & 1 & 2\\
 2 & 2.4 & 2.6 & 2.8 & 4.0\\
 3 & 4.7 & 5 & 5.7 & 6.7\\
 4 & 8.2 & 8.5 & 9.9 & 10.5\\
 5 & 13.1 & 13.5 & 15.5 & 15.7\\
 6 & 19.6 & 20.0 & 22.8 & \textbf{22.5}\\
 7 & 28.0 & 28.4 & 32.0 & \textbf{31.0} \\
 8 & 38.2 & 38.6 & 43.0 & \textbf{41.3}\\
 \hline
\end{tabular}
\end{center}
\caption{Base-2 logarithms of the number of maximal unbalanced families and lower and upper bounds in terms of the number of threshold function. Boldface entries are better than the best general upper bound.}\label{tab:wndata}
\end{table}

\section*{Acknowledgments}

The authors are grateful to the American Institute of Mathematics for hosting the workshop on ``Polyhedral geometry and partition theory" in November 2016 where a conversation of the last two authors sparked the idea for this project. The aforementioned conversation stemmed from an inspiring presentation of Jesus de Loera about the number of chambers of polynomiality of the Kostant partition function.  The authors  are grateful to Jesus de Loera for the mentioned talk as well as for further helpful communications and to Lou Billera for numerous motivating exchanges about this project. The first two authors thank Luca Moci for several discussions related to this project.

\bibliographystyle{plain}
\bibliography{bibliog2}

\end{document}